\newtheorem{teo}{Theorem}[section]
\newtheorem{cor}{Corollary}[section]
\newtheorem{prop}{Proposition}[section]
\DeclareMathOperator{\im}{im}
\DeclareMathOperator{\ind}{ind}
\DeclareMathOperator{\dvol}{dvol}
\DeclareMathOperator{\supp}{supp}
\DeclareMathOperator{\vol}{vol}
\DeclareMathOperator{\reg}{reg}
\DeclareMathOperator{\sing}{sing}
\DeclareMathOperator{\End}{End}
\DeclareMathOperator{\Hom}{Hom}
\DeclareMathOperator{\Tr}{Tr}
\DeclareMathOperator{\abs}{abs}
\DeclareMathOperator{\rel}{rel}
\title{\huge \bf On the Laplace-Beltrami operator on compact complex spaces}
\author{Francesco Bei  \bigskip \\
Dipartimento di Matematica, Universit\`a degli Studi di Padova,\\ E-mail addresses: \ bei@math.unipd.it \     francescobei27@gmail.com }
\date{}
\begin{document}

\maketitle

\begin{abstract}
\noindent Let $(X,h)$ be a compact and irreducible Hermitian complex space of complex dimension $v>1$. In this paper we show that the Friedrichs extension of  both the Laplace-Beltrami operator and the Hodge-Kodaira Laplacian acting on functions  has discrete spectrum. Moreover we provide some estimates for the growth of the corresponding eigenvalues and we use these estimates to deduce that the associated heat operators are trace-class. Finally we give various applications to the Hodge-Dolbeault operator and to the Hodge-Kodaira Laplacian in the setting of Hermitian complex spaces of complex dimension $2$.
\end{abstract}

\noindent\textbf{Keywords}: Hermitian complex space,  Laplace-Beltrami operator, Sobolev inequality,  $\overline{\partial}$-operator, Hodge-Kodaira Laplacian, complex surface. \\

\noindent\textbf{Mathematics subject classification}:  32W05, 32W50, 35P15,  58J35.

\tableofcontents

\section*{Introduction}

Complex projective varieties endowed with the Fubini-Study metric, and more generally Hermitian complex spaces, provide an important class of singular spaces with a rich and beautiful interplay between analysis, geometry and topology. On the analytic side an active research field is given by the study of the $L^2$ properties of the natural differential operators arising in this setting, such as the Dolbeault operator, the de Rham differential  and the associated Dirac operators and Laplacians. Just to cite a small portion of the literature devoted to these questions we can mention here  \cite{BRLU}, \cite{BPS},   \cite{LT},  \cite{TOh}, \cite{OvRu}, \cite{OVV}, \cite{OV}, \cite{PS}, \cite{PaS},  \cite{Ruppe},  \cite{JRu}  and \cite{KIY}. The topics usually investigated in these papers  include the existence of self-adjoint extensions with discrete spectrum for the Hodge and the Hodge-Kodaira Laplacians, the $L^2$-$\overline{\partial}$-cohomology and $L^2$-de Rham cohomology, the $L^2$-Stokes theorem for the de Rham operator and so on. Let us now carry on by explaining  the main goal of this paper: consider a compact and irreducible Hermitian complex space $(X,h)$. We are interested in the properties of the Laplace-Beltrami operator and the Hodge-Kodaira Laplacian acting on functions 
\begin{equation}
\label{rinocefalo}
\Delta:C^{\infty}_c(\reg(X))\rightarrow C^{\infty}_c(\reg(X))
\end{equation}

\begin{equation}
\label{rinocefalos}
\Delta_{\overline{\partial}}:C^{\infty}_c(\reg(X))\rightarrow C^{\infty}_c(\reg(X))
\end{equation}
where both the operators above are viewed as unbounded, densely defined and closable operators acting on $L^2(\reg(X),h)$. Our main results show the existence of a self-adjoint extension with discrete spectrum for both \eqref{rinocefalo} and \eqref{rinocefalos} with an estimate for the growth of the corresponding eigenvalues. Let us go into some more details by describing how the paper is sort out. The first section contains some background materials. The second section is the main one and is devoted to the study of \eqref{rinocefalo} and \eqref{rinocefalos}. The first result we prove is a Sobolev embedding theorem in the setting of compact and irreducible Hermitian complex spaces: 
\begin{teo}
\label{euro}
Let $(X,h)$ be a compact and irreducible Hermitian complex space of complex dimension $v$. Then we have the following properties:
\begin{enumerate}
\item $W^{1,2}_0(\reg(X),h)=W^{1,2}(\reg(X),h)$.
\item Assume that $v>1$. Then we have a continuous inclusion $W^{1,2}(\reg(X),h)\hookrightarrow L^{\frac{2v}{v-1}}(\reg(X),h)$.
\item Assume that $v>1$. Then we have a compact inclusion  $W^{1,2}(\reg(X),h)\hookrightarrow L^2(\reg(X),h)$.
\end{enumerate}
\end{teo}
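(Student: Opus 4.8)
The plan is to establish the three assertions in the order stated, each feeding into the next, with the local embeddings of $(X,h)$ into Euclidean space as the common geometric input.

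For the first assertion I would show that $\sing(X)$ is negligible for the $W^{1,2}$-structure, i.e. that it has vanishing $2$-capacity. Since $(X,h)$ is irreducible of complex dimension $v$, the locus $\sing(X)$ is a proper analytic subset, hence of complex codimension at least one, i.e. real codimension at least two, with locally finite $(2v-2)$-dimensional Hausdorff measure. Given $u\in W^{1,2}(\reg(X),h)$ I would first reduce to the bounded case by truncating $u$ at height $k$, which converges to $u$ in $W^{1,2}$. Then, using a local embedding of $X$ into $\mathbb{C}^N$ and the ambient distance $r$ to $\sing(X)$, I would introduce the logarithmic cut-offs
\[
\chi_\epsilon=\max\left(0,\,\min\left(1,\,\frac{\log(r/\epsilon^2)}{\log(1/\epsilon)}\right)\right),
\]
which vanish near $\sing(X)$ and equal $1$ outside an $\epsilon$-tube. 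A dyadic decomposition of the shell $\{\epsilon^2<r<\epsilon\}$ together with the tube estimate $\vol(\{r<\rho\})\lesssim\rho^2$ (a consequence of the codimension-two bound) yields $\|\nabla\chi_\epsilon\|_{L^2}^2\lesssim 1/\log(1/\epsilon)\to0$. Since $u$ is bounded, $\chi_\epsilon u\to u$ in $W^{1,2}$, and a standard mollification then produces approximants in $C^\infty_c(\reg(X))$, proving $W^{1,2}_0=W^{1,2}$.

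For the second assertion the decisive observation is that the regular part of a complex-analytic subvariety of $\mathbb{C}^N$ is a \emph{minimal} submanifold: being complex, it is calibrated by a power of the Kähler form, so its mean curvature vanishes. By the first assertion it suffices to treat $u\in C^\infty_c(\reg(X))$. Covering $X$ by finitely many charts $U_i$ in which $\reg(X)\cap U_i$ is an embedded complex, hence minimal, submanifold of $\mathbb{C}^N$ of real dimension $m=2v$, I would apply the Michael--Simon Sobolev inequality with vanishing mean-curvature term to obtain the $L^1$-type estimate $\|g\|_{L^{m/(m-1)}}\le c\,\|\nabla g\|_{L^1}$ for each $g\in C^\infty_c(\reg(X)\cap U_i)$. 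Feeding $g=(u^2+\delta^2)^{\gamma/2}$ with $\gamma=2(m-1)/(m-2)$ into this inequality, estimating the right-hand side by Hölder and letting $\delta\to0$, one upgrades it to the critical estimate $\|u\|_{L^{2^*}}\le c\,\|\nabla u\|_{L^2}$ with $2^*=2m/(m-2)=2v/(v-1)$. Summing the localized inequalities over a partition of unity subordinate to $\{U_i\}$ generates the lower-order term $\|u\|_{L^2}$ and hence the continuous embedding $W^{1,2}(\reg(X),h)\hookrightarrow L^{2v/(v-1)}(\reg(X),h)$; the passage from the Euclidean-induced metric, which provides minimality, to the quasi-isometric metric $h$ only alters the constant.

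For the third assertion I would combine the subcritical gain from the second assertion with the classical Rellich--Kondrachov theorem on the smooth part. Let $(u_k)$ be bounded in $W^{1,2}$ and split $u_k=\chi_\epsilon u_k+(1-\chi_\epsilon)u_k$ using the cut-offs above. Since $1/2-1/2^*=1/(2v)$, Hölder's inequality and the second assertion give $\|(1-\chi_\epsilon)u_k\|_{L^2}\le\|u_k\|_{L^{2^*}}\,\vol(\{r<\epsilon\})^{1/(2v)}\le C\,\vol(\{r<\epsilon\})^{1/(2v)}$, which tends to $0$ uniformly in $k$ because the tube around the measure-zero set $\sing(X)$ shrinks. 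The remaining piece $\chi_\epsilon u_k$ is supported in a fixed compact subset of the Riemannian manifold $\reg(X)$ and is bounded in $W^{1,2}$ there, so classical Rellich--Kondrachov extracts an $L^2$-convergent subsequence; a diagonal argument over $\epsilon\to0$ then produces a subsequence of $(u_k)$ that is Cauchy, hence convergent, in $L^2$. The main obstacle is the second assertion, since the embedding must hold with the \emph{intrinsic} critical exponent $2v/(v-1)$ attached to the real dimension $2v$ of $\reg(X)$, not with the exponent of the ambient $\mathbb{C}^N$; the key that unlocks this is precisely the minimality of complex subvarieties, which collapses the Michael--Simon inequality to the intrinsic Sobolev inequality, after which the iteration to the critical exponent and the localization near $\sing(X)$ are technical. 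The first assertion is delicate for a parallel reason: it rests on the codimension-two structure of $\sing(X)$, without which the logarithmic cut-offs would fail to have vanishing Dirichlet energy.
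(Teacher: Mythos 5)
Your proposal is correct in substance and, at the decisive point, identical in strategy to the paper's proof: part (2) rests in both cases on the observation that the regular locus of a local model $\phi(V)\subset\mathbb{C}^N$ is a minimal submanifold of $\mathbb{C}^N$ because it is complex (see \cite{Gray}), so that the Michael--Simon inequality \cite{MISI} supplies the local embedding $W^{1,2}_0\hookrightarrow L^{2v/(v-1)}$ for the induced flat metric and hence, by quasi-isometry on a relatively compact neighbourhood, for $h$; your remark that changing to $h$ ``only alters the constant'' is exactly the paper's argument. Where you genuinely differ is that you make self-contained two steps the paper delegates to references: for (1) the paper invokes parabolicity of $(\reg(X),h)$ (\cite{BeGu}, \cite{JRU}) together with Prop.~4.1 of \cite{FraBei}, whereas you re-derive it via truncation and logarithmic cut-offs; for (3) the paper cites Prop.~4.3 of \cite{FraBei}, whereas you give the tube-splitting/Rellich--Kondrachov/diagonal argument directly. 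Both substitutes are sound, with the caveat that your capacity computation hinges on the tube estimate $\vol_h(\{r<\rho\})\lesssim\rho^2$, which is not free: it requires the local structure of analytic sets (local parametrization as branched covers, Lelong density bounds) and is essentially the content of the parabolicity results you are bypassing, so it should be proved or cited rather than asserted as a formal consequence of the codimension-two bound. The one place where you are too quick relative to the real difficulty is the patching in (2): a continuous partition of unity on $X$ subordinate to the cover need not restrict to functions on $\reg(X)$ with $h$-bounded differentials, and without a uniform bound of the type \eqref{mcvf} the commutator terms $u\,d\gamma_i$ in your summation are not controlled by $\|u\|_{L^2}$. The paper spends the bulk of its proof constructing such a partition, by extending cut-offs to smooth functions on the ambient polydisc and restricting to the subvariety, so that Prop.~\ref{dardo} yields the $L^{\infty}$ bound on the differential; your write-up should either reproduce this construction or at least specify that the partition of unity near $\sing(X)$ is obtained by pulling back smooth ambient cut-offs through the local embeddings.
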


Then we proceed by using the above result to show the existence of a self-adjoint extension with discrete spectrum of \eqref{rinocefalo} providing in addition an asymptotic estimate for the growth of the corresponding  eigenvalues:
\begin{teo}
\label{Beltramide}
Let $(X,h)$ be a compact and irreducible Hermitian complex space of complex dimension $v>1$.  Consider the Laplace-Beltrami operator $\Delta:C^{\infty}_c(\reg(X))\rightarrow C^{\infty}_c(\reg(X))$ and let 
\begin{equation}
\label{fistw}
\Delta^{\mathcal{F}}:L^2(\reg(X),h)\rightarrow L^2(\reg(X),h)
\end{equation}
be its Friedrichs extension. Then \eqref{fistw} has discrete spectrum and 
\begin{equation}
\label{corew}
\{f\in C^{\infty}(\reg(X)): f\in L^2(\reg(X),h), df\in L^2\Omega^1(\reg(X),h), d^t(df)\in L^2(\reg(X),h)\}
\end{equation}
 is a core domain for \eqref{fistw}.\\Let $$0= \lambda_1<\lambda_2\leq...\leq \lambda_k\leq...$$ be the eigenvalues of \eqref{fistw}. Then 
\begin{equation}
\label{asyw}
\lim\inf \lambda_k k^{-\frac{1}{v}}>0
\end{equation}
as $k\rightarrow \infty$. Finally let 
\begin{equation}
\label{lovew}
e^{-t\Delta^{\mathcal{F}}}:L^2(\reg(X),h)\rightarrow L^2(\reg(X),h)
\end{equation}
be the heat operator associated to \eqref{fistw}. Then \eqref{lovew} is a trace class operator and its trace satisfies the following estimate
\begin{equation}
\label{cisiamow}
\Tr(e^{-t\Delta^{\mathcal{F}}})\leq C\vol_h(\reg(X))t^{-v}
\end{equation}
 for $0<t\leq 1$ and for some constant $C>0$.
\end{teo}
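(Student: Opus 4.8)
The plan is to realize $\Delta^{\mathcal{F}}$ through its associated quadratic form and then read off all four assertions from the Sobolev package of Theorem \ref{euro}. First I would introduce the symmetric, nonnegative, densely defined form
\[
Q(u,v)=\langle du,dv\rangle_{L^2\Omega^1(\reg(X),h)},\qquad u,v\in C^\infty_c(\reg(X)),
\]
whose closure is the Dirichlet form attached to the Friedrichs extension \eqref{fistw}. The form domain of the closure is by definition the completion of $C^\infty_c(\reg(X))$ in the graph norm $\|u\|_{L^2}^2+\|du\|_{L^2}^2$, that is $W^{1,2}_0(\reg(X),h)$, and by Theorem \ref{euro}(1) this equals $W^{1,2}(\reg(X),h)$. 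This identification is the structural input that makes the Sobolev estimates of Theorem \ref{euro} directly available to the form.

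For the discreteness of the spectrum I would invoke the standard criterion that a nonnegative self-adjoint operator whose form domain embeds compactly into the ambient Hilbert space has compact resolvent, hence purely discrete spectrum. Here the form domain is $W^{1,2}(\reg(X),h)$ and the compactness of $W^{1,2}(\reg(X),h)\hookrightarrow L^2(\reg(X),h)$ is exactly Theorem \ref{euro}(3). The bottom of the spectrum is the constant eigenvalue $\lambda_1=0$, and the strict inequality $\lambda_1<\lambda_2$ reflects connectedness of $\reg(X)$, i.e.\ irreducibility of $X$. To see that \eqref{corew} is a core, I would use interior elliptic regularity: each eigenfunction of $\Delta^{\mathcal{F}}$ is smooth on the open Riemannian manifold $\reg(X)$ and, being in the operator domain, satisfies $u\in L^2$, $du\in L^2\Omega^1$ and $d^t(du)=\lambda u\in L^2$, so it lies in the set \eqref{corew}. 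Since the spectrum is discrete, the finite linear span of eigenfunctions is dense in the operator domain for the graph norm, and this span sits inside \eqref{corew}; hence \eqref{corew} is a core.

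The analytic heart is the passage from the Sobolev inequality to a heat kernel bound. The form $Q$ is a Dirichlet form (it is Markovian: normal contractions decrease $\int|du|^2$), so the semigroup \eqref{lovew} is submarkovian and extends to a contraction semigroup on every $L^p(\reg(X),h)$. Writing $n=2v$ for the real dimension, Theorem \ref{euro}(2) is the nonhomogeneous Sobolev inequality $\|u\|_{L^{2n/(n-2)}}\le C(\|du\|_{L^2}+\|u\|_{L^2})$ on the form domain, since $2n/(n-2)=2v/(v-1)$. The plan is then to run the Nash--Varopoulos argument: a Sobolev inequality of this type is equivalent to ultracontractivity of the semigroup, yielding $\|e^{-t\Delta^{\mathcal{F}}}\|_{L^1\to L^\infty}\le C t^{-n/2}=Ct^{-v}$ for $0<t\le 1$. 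Equivalently, the heat kernel satisfies the on-diagonal bound $p_t(x,x)\le Ct^{-v}$ uniformly for $0<t\le1$. Integrating this bound over $\reg(X)$ gives both that \eqref{lovew} is trace class and the estimate
\[
\Tr(e^{-t\Delta^{\mathcal{F}}})=\int_{\reg(X)}p_t(x,x)\,\dvol_h\le C\vol_h(\reg(X))\,t^{-v},\qquad 0<t\le1,
\]
which is \eqref{cisiamow}.

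Finally, the eigenvalue growth \eqref{asyw} would follow from the trace estimate by a Tauberian comparison. Writing the counting function $N(\lambda)=\#\{k:\lambda_k\le\lambda\}$, for every $t>0$ one has $N(\lambda)e^{-t\lambda}\le\sum_k e^{-t\lambda_k}=\Tr(e^{-t\Delta^{\mathcal{F}}})\le Ct^{-v}$, and optimizing in $t$ (choosing $t=v/\lambda$) gives $N(\lambda)\le C'\lambda^v$ for large $\lambda$. This upper bound on the counting function is equivalent to $\lambda_k\ge c\,k^{1/v}$ for large $k$, that is $\liminf_{k\to\infty}\lambda_k k^{-1/v}>0$. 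I expect the main obstacle to be the ultracontractivity step: one must check carefully that the semigroup generated by the Friedrichs extension is genuinely submarkovian on $\reg(X)$, so that the Nash--Varopoulos machinery applies to the noncompact, incomplete manifold $\reg(X)$ with its possibly wild volume behaviour near $\sing(X)$. It is precisely this Dirichlet-form structure, rather than any completeness of the metric, that makes the equivalence ``Sobolev inequality $\Leftrightarrow$ ultracontractivity'' legitimate in the present setting.
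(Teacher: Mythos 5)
Your treatment of discreteness, the heat trace bound and the eigenvalue asymptotics is correct and is essentially the paper's route: the paper also deduces the trace-class property and the bound $\Tr(e^{-t\Delta^{\mathcal{F}}})\leq C\vol_h(\reg(X))t^{-v}$ from the Sobolev embedding of Theorem \ref{euro}(2) (delegating the ultracontractivity/Nash--Varopoulos step to Prop.\ 4.4 of \cite{FraBei}, which you spell out), gets discreteness from the compact inclusion of Theorem \ref{euro}(3), and obtains \eqref{asyw} by the same Tauberian comparison with the counting function. Your remark that it is the Dirichlet-form structure, not completeness, that legitimizes the Sobolev--ultracontractivity equivalence is exactly the right point.

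There is, however, a genuine gap in your argument that \eqref{corew} is a core. A core of a closed operator must first of all be a \emph{subset} of its domain; your eigenfunction argument only shows that a dense (for the graph norm) subset of $\mathcal{D}(\Delta^{\mathcal{F}})$ is contained in \eqref{corew}, but it never shows the containment \eqref{corew}$\,\subset\mathcal{D}(\Delta^{\mathcal{F}})$. This containment is not automatic: on a general incomplete manifold a smooth function with $f$, $df$ and $d^t(df)$ all in $L^2$ need not lie in the domain of the Friedrichs extension (think of a manifold with boundary, where membership in the Dirichlet domain imposes a boundary condition beyond these integrability requirements). The paper closes this point by invoking the identification $\Delta^{\mathcal{F}}=d^{t}_{\max}\circ d_{\min}$ (from \cite{BLE}, Lemma 3.1) together with $d_{\min}=d_{\max}$, which is Theorem \ref{euro}(1): these two facts give $C^{\infty}(\reg(X))\cap\mathcal{D}(\Delta^{\mathcal{F}})=$ \eqref{corew} on the nose, and then density of $C^{\infty}(\reg(X))\cap\mathcal{D}(\Delta^{\mathcal{F}})$ in $\mathcal{D}(\Delta^{\mathcal{F}})$ (Prop.\ 2.1 of \cite{FraBei}, or your eigenfunction argument) finishes the claim. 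So your proof of the core statement is repairable with ingredients you already have on the table --- you used $W^{1,2}_0=W^{1,2}$ to identify the form domain but not to identify the operator domain --- yet as written the containment step is missing and must be supplied.
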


We observe that in the setting of complex projective varieties whose regular part is endowed with the Fubini-Study metric Th. \ref{euro} and Th. \ref{Beltramide} have been proved in \cite{LT}. Moreover Th. \ref{Beltramide} had been already proved in \cite{MasNag} and \cite{Pa} in the setting of projective varieties with isolated singularities and complex dimension $2$ and $3$ respectively.  Concerning \eqref{rinocefalos} we prove  similar results that we can summarize in the next two theorems:

\begin{teo}
\label{hklaplaciand}
Let $(X,h)$ be a compact and irreducible Hermitian complex space of complex dimension $v$. There exist positive constants $a$ and $b$ such that for each $f\in C^{\infty}_c(\reg(X))$ we have 
\begin{equation}
\label{apriled}
\|df\|^2_{L^2\Omega^1(\reg(X),h)}\leq a\|f\|^2_{L^2(\reg(X),h)}+b\|\overline{\partial}f\|^2_{L^2\Omega^{0,1}(\reg(X),h)}.
\end{equation}
Assume now that $v>1$.Consider the Hodge-Kodaira Laplacian acting on functions  $\Delta_{\overline{\partial}}:C^{\infty}_c(\reg(X))\rightarrow C^{\infty}_c(\reg(X))$ and let 
\begin{equation}
\label{wised}
\Delta^{\mathcal{F}}_{\overline{\partial}}:L^2(\reg(X),h)\rightarrow L^2(\reg(X),h)
\end{equation}
be its Friedrichs extension. We have the following properties:
\begin{enumerate}
\item Let $f\in L^{2}(\reg(X),h)$. Then $f\in \mathcal{D}(\overline{\partial}_{\min})$ if and only if $f\in W^{1,2}(\reg(X),h)$. Moreover the graph norm induced by $\overline{\partial}_{\min}$ and the norm of $W^{1,2}(\reg(X),h)$ are equivalent. Therefore, at the level of topological vector spaces, we have the following equality: $\mathcal{D}(\overline{\partial}_{\min})=W^{1,2}(\reg(X),h)$.
\item We have a continuous inclusion $\mathcal{D}(\overline{\partial}_{\min})\hookrightarrow L^{\frac{2v}{v-1}}(\reg(X),h)$ where $\mathcal{D}(\overline{\partial}_{\min})$ is endowed with the corresponding graph norm.
\item The natural inclusion $\mathcal{D}(\overline{\partial}_{\min})\hookrightarrow L^2(\reg(X),h)$ is a compact operator where $\mathcal{D}(\overline{\partial}_{\min})$ is endowed with the corresponding graph norm.
\item The operator $\Delta_{\overline{\partial}}^{\mathcal{F}}$ in \eqref{wised} has discrete spectrum.
\end{enumerate}
\end{teo}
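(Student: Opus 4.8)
The plan is to regard the a priori inequality \eqref{apriled} as the analytic core of the statement and to derive the four numbered properties from it together with Theorem \ref{euro}. Since for a function $\partial f$ and $\overline{\partial}f$ have distinct bidegrees, they are pointwise orthogonal, so that $|df|^2=|\partial f|^2+|\overline{\partial}f|^2$ on $\reg(X)$ and \eqref{apriled} is equivalent to an estimate of the form $\|\partial f\|^2\leq a\|f\|^2+(b-1)\|\overline{\partial}f\|^2$. To obtain it I would compare the two Dirichlet energies $\|\partial f\|^2$ and $\|\overline{\partial}f\|^2$ by integrating by parts against the fundamental form $\omega$ of $h$. Taking $\eta=\bar f\,\partial f\wedge\omega^{v-1}$, a smooth compactly supported form on $\reg(X)$, and applying Stokes' theorem to $\int_{\reg(X)}d\eta=0$ (together with the companion form $\bar f\,\overline{\partial}f\wedge\omega^{v-1}$), using $\overline{\partial}\partial=-\partial\overline{\partial}$ and $\overline{\partial}(\omega^{v-1})=(v-1)\overline{\partial}\omega\wedge\omega^{v-2}$, one is led to an identity of the schematic shape
\[
\|\partial f\|^2-\|\overline{\partial}f\|^2=c_v\int_{\reg(X)}\bar f\,\big(\partial f\wedge\overline{\partial}\omega+\overline{\partial}f\wedge\partial\omega\big)\wedge\omega^{v-2},
\]
with $c_v$ a universal constant; the right-hand side is the torsion correction, which vanishes identically in the Kähler case $d\omega=0$.

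The decisive point, and the main obstacle, is to bound this torsion integral. Its integrand is pointwise controlled by $|f|\,|df|$ times the norm of $\partial\omega\wedge\omega^{v-2}$ (respectively $\overline{\partial}\omega\wedge\omega^{v-2}$), so everything hinges on the finiteness of $\sup_{\reg(X)}|\partial\omega|_h$ and $\sup_{\reg(X)}|\overline{\partial}\omega|_h$. This is exactly where the hypothesis that $(X,h)$ is a Hermitian complex space enters: locally $X$ embeds holomorphically in some $\mathbb{C}^N$ and $\omega$ is the restriction of the fundamental form $\tilde\omega$ of a smooth ambient Hermitian metric, whence $\partial\omega=\iota^*(\partial\tilde\omega)$, and pullback under the isometric holomorphic immersion $\iota$ is pointwise norm-nonincreasing; since $\tilde\omega$ is smooth on a relatively compact ambient neighbourhood, $|\partial\tilde\omega|$ is bounded there and the required suprema are finite. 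Granting this, Cauchy--Schwarz gives $\int_{\reg(X)}|f|\,|df|\,dV_h\leq\|f\|\,\|df\|$, and a Young inequality $\|f\|\,\|df\|\leq\varepsilon\|df\|^2+(4\varepsilon)^{-1}\|f\|^2$ with $\varepsilon$ small enough absorbs the $\|\partial f\|^2$ contribution on the left, producing \eqref{apriled} with explicit $a,b$.

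With \eqref{apriled} in hand the four assertions follow formally. For (1): on $C^{\infty}_c(\reg(X))$ the estimate \eqref{apriled} gives $\|f\|^2+\|df\|^2\leq(1+a)\|f\|^2+b\|\overline{\partial}f\|^2$, while trivially $\|f\|^2+\|\overline{\partial}f\|^2\leq\|f\|^2+\|df\|^2$; hence the graph norm induced by $\overline{\partial}$ and the $W^{1,2}$-norm are equivalent on $C^{\infty}_c(\reg(X))$. Since $\mathcal{D}(\overline{\partial}_{\min})$ is the completion of $C^{\infty}_c(\reg(X))$ in the $\overline{\partial}$-graph norm and $W^{1,2}_0(\reg(X),h)$ is its completion in the $W^{1,2}$-norm, the two completions coincide with equivalent norms; invoking $W^{1,2}_0(\reg(X),h)=W^{1,2}(\reg(X),h)$ from Theorem \ref{euro}.(1) then yields the equality $\mathcal{D}(\overline{\partial}_{\min})=W^{1,2}(\reg(X),h)$ as topological vector spaces. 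Properties (2) and (3) are immediate consequences: under the identification just obtained they are nothing but the continuous inclusion $W^{1,2}(\reg(X),h)\hookrightarrow L^{\frac{2v}{v-1}}(\reg(X),h)$ and the compact inclusion $W^{1,2}(\reg(X),h)\hookrightarrow L^2(\reg(X),h)$ furnished by Theorem \ref{euro}.(2) and \ref{euro}.(3).

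Finally, for (4), the quadratic form of $\Delta_{\overline{\partial}}$ on $C^{\infty}_c(\reg(X))$ is $f\mapsto\|\overline{\partial}f\|^2_{L^2\Omega^{0,1}(\reg(X),h)}$, so the form domain of the Friedrichs extension $\Delta^{\mathcal{F}}_{\overline{\partial}}$ in \eqref{wised} is precisely $\mathcal{D}(\overline{\partial}_{\min})$. A nonnegative self-adjoint operator has compact resolvent, equivalently discrete spectrum, as soon as the inclusion of its form domain into $L^2$ is compact; by property (3) this is the case. Hence $\Delta^{\mathcal{F}}_{\overline{\partial}}$ has discrete spectrum, which completes the plan.
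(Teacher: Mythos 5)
Your reduction of properties (1)--(4) to the a priori estimate \eqref{apriled} plus Theorem \ref{euro} is essentially identical to the paper's: the paper also uses $|df|^2=|\partial f|^2+|\overline{\partial}f|^2$ to identify $\mathcal{D}(\overline{\partial}_{\min})$ with $W^{1,2}_0=W^{1,2}$, deduces (2) and (3) from the Sobolev theorem, and gets (4) by factoring the compact inclusion $\mathcal{D}(\Delta^{\mathcal{F}}_{\overline{\partial}})\hookrightarrow\mathcal{D}(\overline{\partial}_{\min})\hookrightarrow L^2$ (it phrases this via $\Delta^{\mathcal{F}}_{\overline{\partial}}=\overline{\partial}^t_{\max}\circ\overline{\partial}_{\min}$ and the elementary inequality $\|\overline{\partial}_{\min}f\|^2\leq\frac12(\|f\|^2+\|\Delta^{\mathcal{F}}_{\overline{\partial}}f\|^2)$ rather than via the form domain, but this is the same idea). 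Where you genuinely diverge is in the proof of \eqref{apriled} itself. The paper argues locally: it reuses the partition of unity $\{\gamma_i\}$ from the Sobolev theorem, observes that on each chart $h$ is quasi-isometric to a K\"ahler metric $h_i$ (the pullback of the flat metric under the local embedding), invokes the K\"ahler identity $\|d(f\gamma_i)\|^2_{h_i}=2\|\overline{\partial}(f\gamma_i)\|^2_{h_i}$, and patches using the $L^\infty$ bounds on $d\gamma_i$; this yields \eqref{apriled} with constants of the form $a=(2sc)^2$, $b=(2scd)^2$. You instead work globally, deriving the torsion identity $\|\partial f\|^2-\|\overline{\partial}f\|^2=c_v\int\bar f(\partial f\wedge\overline{\partial}\omega+\overline{\partial}f\wedge\partial\omega)\wedge\omega^{v-2}$ by Stokes (legitimate here since $f$ has compact support in $\reg(X)$, so no parabolicity is needed) and absorbing the right-hand side via Cauchy--Schwarz once $\|d\omega\|_{L^\infty(\reg(X),h)}<\infty$ is known; that finiteness follows, exactly as you say, from the local isometric holomorphic embeddings, Prop. \ref{dardo}, and compactness of $X$ --- the same structural input the paper feeds into its quasi-isometry argument. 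Your route buys sharper, geometrically meaningful constants (in the K\"ahler case it degenerates to $\|\partial f\|^2=\|\overline{\partial}f\|^2$, i.e.\ $a=0$, $b=2$) at the price of the sign-sensitive Stokes computation; the paper's route avoids any torsion computation but produces constants depending on the chart count. The only points you should make rigorous are the ones you flag as schematic: check the signs in the Leibniz expansion of $\overline{\partial}(\bar f\,\partial f\wedge\omega^{v-1})$ and $\partial(\bar f\,\overline{\partial}f\wedge\omega^{v-1})$ (using that the components of complementary bidegree vanish identically on a $v$-dimensional complex manifold, and $\partial\overline{\partial}=-\overline{\partial}\partial$), and record the pointwise bound $|\bar f\,\partial f\wedge\overline{\partial}\omega\wedge\omega^{v-2}|\leq C_v|f|\,|df|\,|d\omega|\,\dvol_h$. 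Both check out, so I see no gap.
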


\begin{teo}
\label{adissabeba}
In the setting of Th. \ref{hklaplaciand}. Let $(a,b)$ be any pair of positive numbers such that \eqref{apriled} holds true. Let $$0=\lambda_1<\lambda_2\leq\lambda_3\leq...\ \text{and}\ 0=\mu_1<\mu_2\leq \mu_3\leq...$$ be the eigenvalues of \eqref{fistw} and \eqref{wised} respectively. Then we have the following inequalities
\begin{equation}
\label{orociokd}
\mu_k\leq \lambda_k \leq a+b\mu_k
\end{equation}
for each $k\in \mathbb{N}$.
As a first consequence we have 
\begin{equation}
\label{asilod}
\lim\inf \mu_k k^{-\frac{1}{v}}>0
\end{equation}
as $k\rightarrow \infty$. Finally let 
\begin{equation}
\label{lottod}
e^{-t\Delta_{\overline{\partial}}^{\mathcal{F}}}:L^2(\reg(X),h)\rightarrow L^2(\reg(X),h)
\end{equation}
be the heat operator associated to \eqref{wised}. Then \eqref{lottod} is a trace class operator and its trace satisfies the following estimates
\begin{equation}
\label{matricianad}
e^{-at}\Tr(e^{-tb\Delta_{\overline{\partial}}^{\mathcal{F}}})\leq \Tr(e^{-t\Delta^{\mathcal{F}}})\leq \Tr(e^{-t\Delta_{\overline{\partial}}^{\mathcal{F}}})
\end{equation}
and, for each $0<t\leq 1$,
\begin{equation}
\label{agamennoned}
\Tr(e^{-t\Delta_{\overline{\partial}}^{\mathcal{F}}})\leq r\vol_h(\reg(X))t^{-v}
\end{equation}
where $r$ is some positive constant.
\end{teo}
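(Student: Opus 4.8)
The plan is to deduce every assertion from the variational (min-max) characterization of the eigenvalues, using the quadratic-form inequality \eqref{apriled} as the bridge between the two operators. First I would fix the variational set-up. By Th. \ref{Beltramide} and Th. \ref{hklaplaciand} both $\Delta^{\mathcal{F}}$ and $\Delta_{\overline{\partial}}^{\mathcal{F}}$ are nonnegative self-adjoint operators with discrete spectrum, whose associated quadratic forms are $Q(f)=\|df\|^2_{L^2\Omega^1(\reg(X),h)}$ and $Q_{\overline{\partial}}(f)=\|\overline{\partial}f\|^2_{L^2\Omega^{0,1}(\reg(X),h)}$. The crucial point is that the two form domains coincide: by Th. \ref{euro}(1) the form domain of $\Delta^{\mathcal{F}}$ is $W^{1,2}_0(\reg(X),h)=W^{1,2}(\reg(X),h)$, while by Th. \ref{hklaplaciand}(1) the form domain of $\Delta_{\overline{\partial}}^{\mathcal{F}}$ is $\mathcal{D}(\overline{\partial}_{\min})=W^{1,2}(\reg(X),h)$ with equivalent norms. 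Hence both eigenvalue sequences are given by the Courant-Fischer principle as infima over $k$-dimensional subspaces of one and the same space $W^{1,2}(\reg(X),h)$.

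Next I would compare the two forms pointwise on $W^{1,2}(\reg(X),h)$. For a function one has the orthogonal splitting $df=\partial f+\overline{\partial}f$ into its $(1,0)$- and $(0,1)$-parts, so that $\|df\|^2=\|\partial f\|^2+\|\overline{\partial}f\|^2\geq\|\overline{\partial}f\|^2$; checked first on $C^{\infty}_c(\reg(X))$ and then extended by density, this gives $Q_{\overline{\partial}}(f)\leq Q(f)$. Writing $R(f)=Q(f)/\|f\|^2_{L^2(\reg(X),h)}$ and $R_{\overline{\partial}}(f)=Q_{\overline{\partial}}(f)/\|f\|^2_{L^2(\reg(X),h)}$ for the two Rayleigh quotients, the above together with \eqref{apriled} yields $R_{\overline{\partial}}(f)\leq R(f)\leq a+bR_{\overline{\partial}}(f)$ for every nonzero $f$. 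Inserting these into the min-max principle, and using that $\min_{V}\max_{f\in V}(a+bR_{\overline{\partial}}(f))=a+b\min_{V}\max_{f\in V}R_{\overline{\partial}}(f)$ since $a$ is a constant and $b>0$, produces exactly \eqref{orociokd}: $\mu_k\leq\lambda_k\leq a+b\mu_k$ for every $k$.

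The asymptotic \eqref{asilod} is then immediate: from $\lambda_k\leq a+b\mu_k$ one has $\mu_k\geq(\lambda_k-a)/b$, hence $\mu_k k^{-1/v}\geq b^{-1}\bigl(\lambda_k k^{-1/v}-a k^{-1/v}\bigr)$, and since $a k^{-1/v}\to0$ while $\lim\inf\lambda_k k^{-1/v}>0$ by Th. \ref{Beltramide}, we obtain $\lim\inf\mu_k k^{-1/v}>0$. The trace estimates \eqref{matricianad} follow by summing exponentials: for $t>0$, $\mu_k\leq\lambda_k$ gives $e^{-t\lambda_k}\leq e^{-t\mu_k}$ and hence $\Tr(e^{-t\Delta^{\mathcal{F}}})\leq\Tr(e^{-t\Delta_{\overline{\partial}}^{\mathcal{F}}})$, while $\lambda_k\leq a+b\mu_k$ gives $e^{-t\lambda_k}\geq e^{-at}e^{-tb\mu_k}$ and hence $e^{-at}\Tr(e^{-tb\Delta_{\overline{\partial}}^{\mathcal{F}}})\leq\Tr(e^{-t\Delta^{\mathcal{F}}})$. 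The trace-class property of \eqref{lottod} comes from the convergent majorization $\sum_k e^{-t\mu_k}\leq e^{ta/b}\sum_k e^{-(t/b)\lambda_k}=e^{ta/b}\Tr(e^{-(t/b)\Delta^{\mathcal{F}}})<\infty$, the finiteness being guaranteed by Th. \ref{Beltramide}.

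For the final bound \eqref{agamennoned} I would combine the left inequality of \eqref{matricianad} with $\Tr(e^{-t\Delta^{\mathcal{F}}})\leq C\vol_h(\reg(X))t^{-v}$ from Th. \ref{Beltramide}, obtaining $\Tr(e^{-tb\Delta_{\overline{\partial}}^{\mathcal{F}}})\leq e^{at}C\vol_h(\reg(X))t^{-v}$; the substitution $s=tb$ and absorption of $e^{as/b}$ and $b^{v}$ into a single constant $r$ then give $\Tr(e^{-s\Delta_{\overline{\partial}}^{\mathcal{F}}})\leq r\vol_h(\reg(X))s^{-v}$. I expect the main obstacles to be bookkeeping rather than deep: first, genuinely justifying that the two form domains coincide as topological vector spaces so that the min-max is honestly taken over the same space (this rests on Th. \ref{euro}(1) and Th. \ref{hklaplaciand}(1)); and second, in the last substitution, handling the case $b<1$, where the inherited bound a priori only controls $s\in(0,b]$ and must be extended to all of $(0,1]$ by using the monotonicity of the heat trace and the boundedness of $s^{-v}$ on the remaining compact interval.
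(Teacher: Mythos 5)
Your proposal is correct and follows essentially the same route as the paper: the min-max principle applied to the quadratic forms $\|df\|^2$ and $\|\overline{\partial}f\|^2$ on the common form domain $\mathcal{D}(d_{\min})=\mathcal{D}(\overline{\partial}_{\min})=W^{1,2}(\reg(X),h)$, the pointwise comparison $\|\overline{\partial}f\|^2\leq\|df\|^2\leq a\|f\|^2+b\|\overline{\partial}f\|^2$, and then summation of exponentials for the trace estimates. The only cosmetic differences are that the paper realizes the min-max by testing on the explicit eigenspaces $F_k$ and $G_k$ rather than invoking monotonicity of the inf-sup, and derives the trace-class property from the eigenvalue asymptotics \eqref{asilod} rather than from your (equally valid) majorization; your explicit treatment of the case $b<1$ in the final rescaling is a detail the paper leaves implicit.
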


We remark that if $\sing(X)$ is made of isolated points then the fact that \eqref{wised} has discrete spectrum is contained in \cite{Ruppe}. Furthermore we  stress  the fact that Th. \ref{euro}, Th. \ref{Beltramide}, Th. \ref{hklaplaciand} and Th. \ref{adissabeba} do not require  assumptions either on the dimension of $X$ or on $\sing(X)$. The remaining part of the second section contains some applications and corollaries of the above theorems. Finally  the third section is devoted to the Hodge-Kodaira Laplacian and the Hodge-Dolbeault operator in the setting of Hermitian complex spaces of complex dimension $2$. More precisely thanks to the above theorems we are able to extend to the case of $2$-dimensional Hermitian complex spaces the  results proved in \cite{FBei} in setting of complex projective surfaces. We conclude this introduction by recalling the following theorem from the third section. We refer to the first section for the definition of $\Delta_{\overline{\partial},2,q,\abs}$.
\begin{teo}
\label{lillottinaw}
Let $(X,h)$ be a compact and irreducible Hermitian complex  space of complex dimension $v=2$. For each $q=0,1,2$  we have the following properties:
\begin{enumerate}
\item $\Delta_{\overline{\partial},2,q,\abs}:L^2\Omega^{2,q}(\reg(X),h)\rightarrow L^2\Omega^{2,q}(\reg(X),h)$ has discrete spectrum.
\item $\overline{\partial}_{2,\max}+\overline{\partial}^t_{2,\min}:L^2\Omega^{2,\bullet}(\reg(X),h)\rightarrow L^2\Omega^{2,\bullet}(\reg(X),h)$ has discrete spectrum.
\item $\Delta_{\overline{\partial},2,q}^{\mathcal{F}}:L^2\Omega^{2,q}(\reg(X),h)\rightarrow L^2\Omega^{2,q}(\reg(X),h)$ has discrete spectrum.
\end{enumerate}
Let $q\in \{0,1,2\}$ and consider the operator
\begin{equation}
\label{resilew}
\Delta_{\overline{\partial},2,q,\abs}:L^2\Omega^{2,q}(\reg(X),h)\rightarrow L^2\Omega^{2,q}(\reg(X),h).
\end{equation}
Let $$0\leq \lambda_1\leq \lambda_2\leq...\leq \lambda_k\leq...$$ be the eigenvalues of \eqref{resilew}. Then we have the following asymptotic inequality
\begin{equation}
\label{resilesw}
\lim \inf \lambda_k k^{-\frac{1}{2}}>0
\end{equation}
as $k\rightarrow \infty$.\\ Consider now the heat operator associated to \eqref{resilew}
\begin{equation}
\label{resilexw}
e^{-t\Delta_{\overline{\partial},2,q,\abs}}:L^2\Omega^{2,q}(\reg(X),h)\rightarrow L^2\Omega^{2,q}(\reg(X),h).
\end{equation}
Then \eqref{resilexw} is a trace class operator and its trace satisfies the following estimate
\begin{equation}
\label{resilezw}
\Tr(e^{-t\Delta_{\overline{\partial},2,q,\abs}})\leq C_qt^{-2}
\end{equation}
for $t\in (0,1]$ and some constant $C_q>0$.
\end{teo}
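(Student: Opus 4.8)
The plan is to deduce everything from the scalar results of Th.~\ref{Beltramide}, Th.~\ref{hklaplaciand} and Th.~\ref{adissabeba}, together with the Sobolev and compactness statements of Th.~\ref{euro}, exploiting the top holomorphic degree structure that is available only because $v=2$. First I would record the organizing remark that the three discreteness items are tightly linked. Writing $D_{\abs}:=\overline{\partial}_{2,\max}+\overline{\partial}^t_{2,\min}$ for the rolled-up Hodge-Dolbeault operator on $L^2\Omega^{2,\bullet}(\reg(X),h)$, one has $D_{\abs}^2=\bigoplus_{q=0}^{2}\Delta_{\overline{\partial},2,q,\abs}$, so that $D_{\abs}$ has discrete spectrum if and only if each $\Delta_{\overline{\partial},2,q,\abs}$ does; this makes item~(2) equivalent to item~(1). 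Moreover the form domain of the Friedrichs extension, i.e. the closure of $C^\infty_c$ in the graph norm of $\overline{\partial}\oplus\overline{\partial}^t$, equals $\mathcal D(\overline{\partial}_{2,\min})\cap\mathcal D(\overline{\partial}^t_{2,\min})$ and is therefore contained in the form domain $\mathcal D(\overline{\partial}_{2,\max})\cap\mathcal D(\overline{\partial}^t_{2,\min})$ of the absolute Laplacian; hence a compact embedding of the latter into $L^2$ forces the same for the former and yields item~(3). In this way all three discreteness statements reduce to a single assertion: the inclusion $\mathcal D(\overline{\partial}_{2,\max})\cap\mathcal D(\overline{\partial}^t_{2,\min})\hookrightarrow L^2\Omega^{2,\bullet}(\reg(X),h)$, with the graph norm, is compact.

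Next I would use the conjugate-linear Hodge star $\overline{*}_h:L^2\Omega^{2,q}(\reg(X),h)\to L^2\Omega^{0,2-q}(\reg(X),h)$, a unitary isomorphism intertwining $\overline{\partial}_2$ with $\pm\overline{\partial}^t_0$ and interchanging the maximal and minimal closed extensions. For $q=2$ this identifies $\Delta_{\overline{\partial},2,2,\abs}$ with the Hodge-Kodaira Laplacian on functions of Th.~\ref{hklaplaciand}, so its discrete spectrum and the estimates \eqref{resilesw}, \eqref{resilezw} are immediate from Th.~\ref{hklaplaciand} and Th.~\ref{adissabeba} with $v=2$. The content of the theorem is therefore concentrated in the genuine form degrees $q=0,1$.

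For these degrees the plan is to establish a Bochner-Kodaira-G\aa rding inequality controlling a full Sobolev norm by the Dolbeault graph norm: for $\eta\in C^\infty_c(\reg(X),\Lambda^{2,q})$ I would like an estimate of the form $\|\nabla\eta\|^2_{L^2}\le A\|\eta\|^2_{L^2}+B\bigl(\|\overline{\partial}\eta\|^2_{L^2}+\|\overline{\partial}^t\eta\|^2_{L^2}\bigr)$, after which a Rellich-type argument gives the required compactness. To produce such an inequality without any hypothesis on $\sing(X)$ I would pass to a resolution of singularities $\pi:(M,g)\to(X,h)$ with $M$ a compact complex surface and $g$ a smooth Hermitian metric, and use that a $(2,q)$-form is a section of $K_M\otimes\Lambda^{0,q}$, a bundle whose Chern curvature is bounded on the compact smooth $M$: the Bochner-Kodaira-Nakano formula then expresses $\Delta_{\overline{\partial},2,q}$ as $\nabla^*\nabla$ plus zeroth-order curvature terms that are under control. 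Transferring back to $(\reg(X),h)$ through the quasi-isometry type comparison between $\pi^*h$ and $g$ away from the exceptional divisor, and invoking the scalar Sobolev embedding and compact inclusion of Th.~\ref{euro} componentwise in controlled local frames, yields the compact embedding and hence items~(1)--(3).

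Finally, for the eigenvalue asymptotics \eqref{resilesw} and the heat-trace bound \eqref{resilezw} I would run the min-max comparison used in Th.~\ref{adissabeba}: the Bochner-Kodaira inequality dominates the $(2,q)$-form quadratic form from below by a scalar one, so that the eigenvalues $\lambda_k$ of $\Delta_{\overline{\partial},2,q,\abs}$ are bounded below, up to an affine change, by those of the Laplace-Beltrami operator of Th.~\ref{Beltramide}; since $v=2$ this transfers $\liminf\lambda_k k^{-1/2}>0$, and summing $e^{-t\lambda_k}$ (equivalently, dominating the form heat trace by a constant multiple of the scalar one) gives the trace-class property together with $\Tr(e^{-t\Delta_{\overline{\partial},2,q,\abs}})\le C_q t^{-2}$ for $t\in(0,1]$. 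The main obstacle is the third paragraph: controlling the curvature and torsion terms of the Bochner-Kodaira-Nakano formula and the degeneracy of $\pi^*h$ near $\sing(X)$, which is precisely where the restriction $v=2$ and the top holomorphic degree of $\Lambda^{2,q}$ are essential.
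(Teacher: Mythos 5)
Your reductions are sound: the identity $D_{\abs}^2=\bigoplus_q\Delta_{\overline{\partial},2,q,\abs}$ does make item (2) equivalent to item (1), the inclusion of the Friedrichs form domain into the absolute form domain does give item (3) from item (1), and the treatment of $q=2$ by conjugate-linear Hodge duality, reducing $\Delta_{\overline{\partial},2,2,\abs}$ to $\Delta_{\overline{\partial}}^{\mathcal{F}}=\overline{\partial}^t_{\max}\circ\overline{\partial}_{\min}$ on functions, is exactly what the paper does. The problem is that the entire weight of the theorem then rests on your third paragraph, and the argument proposed there has a genuine gap which you yourself flag as ``the main obstacle'' without resolving it. A Bochner--Kodaira--Nakano identity for $\Delta_{\overline{\partial},2,q}$ on $(\reg(X),h)$ involves the curvature of $K\otimes\Lambda^{0,q}$ and the torsion of the Chern connection computed \emph{with respect to $h$}, equivalently with respect to $\pi^*h$ on $M\setminus D$. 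The metric $\pi^*h$ degenerates along the exceptional divisor $D$, so it is not quasi-isometric to any smooth metric $g$ on the compact resolution $M$; the boundedness of the Chern curvature of $K_M\otimes\Lambda^{0,q}$ with respect to $g$ says nothing about the terms appearing in the identity for $\pi^*h$, and the proposed ``transfer through the quasi-isometry type comparison away from the exceptional divisor'' fails precisely in the neighbourhood of $D$ where control is needed. A G{\aa}rding inequality $\|\nabla\eta\|^2\leq A\|\eta\|^2+B(\|\overline{\partial}\eta\|^2+\|\overline{\partial}^t\eta\|^2)$ for $(2,q)$-forms is therefore not established, and in fact the paper never proves one: its only inequality of this type is Prop. \ref{estimate}, which is for functions and works because locally, for a K\"ahler comparison metric, $\Delta=2\Delta_{\overline{\partial}}$ on functions with no curvature terms at all.

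What the paper actually uses for $q=0$ is the special feature of top holomorphic degree that for an $(n,0)$-form $\omega$ the density $|\omega|^2_h\dvol_h$ is, up to a universal constant, $\omega\wedge\overline{\omega}$ and hence independent of the metric; this identifies $L^2\Omega^{2,0}(\reg(X),h)$ with the corresponding space on a resolution carrying a smooth metric and is the content of Th. 5.1 of \cite{FBei}, which is invoked verbatim. For $q=1$ no direct coercivity estimate is available either: the paper instead quotes the finite dimensionality of $H^{2,q}_{2,\overline{\partial}_{\max}}(\reg(X),h)$ from \cite{JRu}, deduces via \cite{BL} Th. 2.4 that $\Delta_{\overline{\partial},2,1,\abs}$ is Fredholm with closed range, and then reduces compactness of $\mathcal{D}(\Delta_{\overline{\partial},2,1,\abs})\hookrightarrow L^2\Omega^{2,1}(\reg(X),h)$ to compactness on $\im(\Delta_{\overline{\partial},2,1,\abs})$, which is handled by the argument of Th. 5.3 of \cite{FBei} using the already settled degrees $0$ and $2$. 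Your min-max scheme in the last paragraph for \eqref{resilesw} and \eqref{resilezw} would work \emph{if} the coercivity inequality held, but since it does not, the eigenvalue and heat-trace estimates for $q=0,1$ are also left unproved; the paper obtains them again from Th. 5.1 and Th. 5.4 of \cite{FBei} and, for $q=2$, from Th. \ref{spechk} via duality, which is the one case where your route and the paper's coincide.
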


\vspace{1 cm}

\noindent\textbf{Acknowledgments.}  This work was performed within the framework of the LABEX MILYON
(ANR-10-LABX-0070) of Universit\'e de Lyon, within the program "Investissements d'Avenir" (ANR-11-IDEX-0007) operated by the French National Research Agency (ANR).

\section{Background material}

We start with a brief reminder about $L^p$-spaces  and differential operators.  Let $(M,g)$  be an open and possibly incomplete Riemannian manifold of dimension $m$ and let $E$ be a vector bundle over $M$ of rank $k$ endowed with a metric $\rho$. Let $\dvol_g$ be the one-density associated to $g$.  
 For every $p$ with $1\leq p< \infty$  $L^{p}(M,E,g)$ is defined as the  space  of measurable sections $s$ such that    $$\|s\|_{L^{p}(M,E,g)}:=\left(\int_{M}|s|_{\rho}^p\dvol_g\right)^{1/p}<\infty$$ where $|s|_{\rho}:= (\rho(s,s))^{1/2}$. For each $p\in [1, \infty)$ we have a Banach space,  for each $p\in (1, \infty)$  we have a reflexive Banach space and  in the case $p=2$ we have a Hilbert space whose inner product is given by $$\langle s, t \rangle_{L^2(M,E,g)}:= \int_{M}\rho(s,t)\dvol_g.$$ Moreover $C^{\infty}_c(M,E)$,  the space of smooth sections with compact support,  is dense in $L^p(M,E,g)$ for $p\in [1,\infty).$ Finally $L^{\infty}(M,E,\rho)$ is defined as the space of measurable sections whose essential supp is bounded. 
Also in this case we get a Banach space. 
Consider  now  another vector bundle $F$ over $M$ endowed with a metric $\tau$. Let $P: C^{\infty}_c(M,E)\longrightarrow  C^{\infty}_c(M,F)$ be a differential operator of order $d$. The formal adjoint of $P$ is the differential operator $$P^t: C^{\infty}_c(M,F)\longrightarrow C^{\infty}_c(M,E)$$  uniquely characterized by the following property: for each $u\in C^{\infty}_c(M,E)$ and for each $v\in C^{\infty}_c(M,F)$ we have  $$\int_{M}\rho(u, P^tv)\dvol_g=\int_M\tau(Pu,v)\dvol_g.$$ From now on we restrict ourselves to the $L^2$ setting. We can look at $P$ as an unbounded, densely defined and closable operator  acting between $L^2(M,E,g)$ and $L^2(M,F,g)$. In general $P$ admits several different closed extensions. We recall now the definitions of the maximal and the minimal one.
The domain of the  maximal extension of $P:L^2(M,E,g)\longrightarrow L^2(M,F,g)$ is defined as $\mathcal{D}(P_{\max}):=\{s\in L^{2}(M,E,g): \text{there is}\ v\in L^2(M,F,g)\ \text{such that}$ $$\int_{M}\rho(s,P^t\phi)\dvol_g=\int_{M}\tau(v,\phi)\dvol_g$$ $\text{for each}\ \phi\in C^{\infty}_c(M,F,g)\}.$\ \text{In this case we put}\ $P_{\max}s=v.$  In other words the maximal extension of $P$ is the one defined in the distributional sense. The domain of the minimal extension of $P:L^2(M,E,g)\longrightarrow L^2(M,F,g)$ is defined as $\mathcal{D}(P_{\min}):=\{s\in L^{2}(M,E,g)\ \text{such that there is a sequence}\ \{s_i\}\in C_c^{\infty}(M,E)\ \text{with}$ $$s_i\rightarrow s\ \text{in}\ L^{2}(M,E,g)\ \text{and}\ Ps_i\rightarrow w\ \text{in}\ L^2(M,F,g)$$ $\text{to some }\ w\in L^2(M,F,g)\}.$ We put $P_{\min}s=w.$ Briefly the minimal extension of $P$ is the closure of $C^{\infty}_c(M,E)$ under the graph norm $\|s\|_{L^2(M,E,g)}+\|Ps\|_{L^2(M,F,g)}$.  It is immediate to check that 
\begin{equation}
\label{emendare}
P_{\max}^*=P^t_{\min}\ \text{and that}\  P_{\min}^*=P^t_{\max}
\end{equation}
 that is  $P^t_{\max/\min}:L^2(M,F,g)\rightarrow L^2(M,E,g)$ is the Hilbert space adjoint of $P_{\min/\max}$ respectively. 
We proceed now by recalling the notion of  Sobolev space associated to a metric connection. Consider again a vector bundle $E$ endowed with the metric $\rho$. Let $\nabla:C^{\infty}(M,E)\longrightarrow C^{\infty}(M,T^*M\otimes E)$ be a metric connection, that is a connection which satisfies the following property: for each $s, v\in C^{\infty}(M,E)$ we have $d(h(s,v))=\rho(\nabla s, v)+\rho(s, \nabla v)$. Clearly $\rho$ and $g$  induce in a natural way a metric on $T^*M\otimes E$ that we label by $\tilde{\rho}$. Let $\nabla^t:C^{\infty}_c(M,T^*M\otimes E)\longrightarrow C^{\infty}_c(M,E)$ be  the formal adjoint of $\nabla$ with respect to $\tilde{\rho}$ and $g$. Then  the Sobolev space $W^{1,2}(M,E)$ is  defined in the following way: $W^{1,2}(M,E):=\{s\in L^2(M,E): \text{there is}\ v\in L^2(M,T^*M\otimes E)\ \text{such that}$ $$\int_{M}\rho(s,\nabla^t\phi)\dvol_g=\int_{M}\tilde{\rho}(v,\phi)\dvol_g$$ $\text{for each}\ \phi\in C^{\infty}_c(M,T^*M\otimes E)\}$. Looking at $\nabla$ as a differential operator  we have $W^{1,2}(M,E)=\mathcal{D}(\nabla_{\max})$.
Moreover we have also   the Sobolev space $W^{1,2}_0(M,E)$ whose definition is the following: $W^{1,2}_0(M,E):=\{s\in L^2(M,E)\ \text{such that there is a sequence}\ \{s_i\}\in C_c^{\infty}(M,E)\ \text{with}$ $$s_i\rightarrow s\ \text{in}\ L^2(M,E)\ \text{and}\ \nabla s_i\rightarrow w\ \text{in}\ L^2(M,T^*M\otimes E)$$ $\text{to some }\ w\in L^2(M,T^*M\otimes E)\}.$
Similarly to previous case we have $W^{1,2}_0(M,E)=\mathcal{D}(\nabla_{\min})$. Clearly $W^{1,2}(M,E)$ and $W^{1,2}_0(M,E)$ are two Hilbert spaces. Concerning the notation, when $E$ is the trivial bundle $M\times \mathbb{R}$, we will write $W^{1,2}(M,g)$ and $W^{1,2}_0(M,g)$. In this case it is straightforward to check  that $W^{1,2}(M,g)=\mathcal{D}(d_{\max})$ and $W_0^{1,2}(M,g)=\mathcal{D}(d_{\min})$ where $d_{\max/\min}$ is respectively the maximal/minimal extension of the de Rham operator acting on functions $d:L^2(M,g)\rightarrow L^2\Omega^1(M,g)$.  We recall now the following results as they will be used later.
\begin{prop}
\label{locin}
Let $(M,g)$ be an open and possibly incomplete Riemannian manifold of dimension $m>2$. Let $U\subset M$ be an open subset with compact closure. Then we have the following continuous inclusion $$W^{1,2}_0(U,g|_{U})\hookrightarrow L^{\frac{2m}{m-2}}(U,g|_U).$$
\end{prop}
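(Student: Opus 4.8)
The plan is to reduce the statement to the classical Euclidean Sobolev inequality via a finite atlas and a partition of unity, exploiting the compactness of $\overline{U}$. Since $W^{1,2}_0(U,g|_U)$ is by definition the closure of $C^{\infty}_c(U)$ in the graph norm of $d_{\min}$, it suffices to prove the single estimate
$$\|f\|_{L^{2m/(m-2)}(U,g|_U)}\leq C\|f\|_{W^{1,2}_0(U,g|_U)}$$
for every $f\in C^{\infty}_c(U)$, with a constant $C$ independent of $f$, and then to extend it by density. First I would cover the compact set $\overline{U}$ by finitely many coordinate charts $(V_1,\phi_1),\dots,(V_N,\phi_N)$ in $M$ whose closures are compact and contained in coordinate neighborhoods, and fix a smooth partition of unity $\{\chi_i\}$ subordinate to $\{V_i\}$ with $\sum_i\chi_i\equiv 1$ on a neighborhood of $\overline{U}$; in particular $f=\sum_i\chi_i f$ for every $f\in C^{\infty}_c(U)$.

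For each $i$ the key observation is that, because $\overline{V_i}$ is compact, the components $g_{jk}$ of the metric and $\det(g_{jk})$ read in the chart $\phi_i$ are bounded above and below by positive constants. Consequently the Riemannian volume density and the Riemannian pointwise norm of a gradient are comparable, uniformly on $\overline{V_i}$, to their Euclidean counterparts in the coordinates $\phi_i$. I would then note that each $\chi_i f$ has compact support in $V_i$, so that $u_i:=(\chi_i f)\circ\phi_i^{-1}$ extends by zero to a function in $C^{\infty}_c(\mathbb{R}^m)$. Applying the classical Gagliardo--Nirenberg--Sobolev inequality to $u_i$ (here $m>2$ is exactly what makes the exponent $2m/(m-2)$ finite and the Euclidean inequality available) and transporting both sides back to $V_i$ through the metric comparability yields
$$\|\chi_i f\|_{L^{2m/(m-2)}(U,g|_U)}\leq C_i\|\nabla(\chi_i f)\|_{L^2(U,g|_U)}$$
for a constant $C_i$ depending only on the chart.

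To conclude the estimate on $f$ I would sum over $i$: by the triangle inequality in $L^{2m/(m-2)}$ and the Leibniz rule $\nabla(\chi_i f)=(d\chi_i)f+\chi_i\nabla f$, together with the uniform bounds on the finitely many quantities $|d\chi_i|$ and $|\chi_i|$, one obtains
$$\|f\|_{L^{2m/(m-2)}(U,g|_U)}\leq\sum_{i=1}^{N}\|\chi_i f\|_{L^{2m/(m-2)}(U,g|_U)}\leq C'\big(\|f\|_{L^2(U,g|_U)}+\|df\|_{L^2\Omega^1(U,g|_U)}\big),$$
which is the desired inequality. The passage from $C^{\infty}_c(U)$ to all of $W^{1,2}_0(U,g|_U)$ is then routine: if $s_i\to s$ in $W^{1,2}_0$ with $s_i\in C^{\infty}_c(U)$, then $\{s_i\}$ is Cauchy in $L^{2m/(m-2)}$ by the inequality just proved, hence converges there to a limit which, being also an $L^2$-limit of the same sequence, must coincide with $s$ almost everywhere; the inequality therefore survives in the limit.

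I expect the main obstacle to be the uniform metric comparability step, since it is precisely here that the compactness of $\overline{U}$ is indispensable: it guarantees both a \emph{finite} subcover and two-sided bounds on the metric coefficients on each chart, so that the finitely many local constants $C_i$ can be amalgamated into a single global $C$. Everything else reduces to the Euclidean inequality and bookkeeping with the partition of unity.
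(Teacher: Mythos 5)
Your argument is correct, and it proves the proposition by a more self-contained route than the paper does. The paper's proof is essentially a two-line reduction: since $\overline{U}$ is compact one chooses a compact submanifold with boundary $N\subset M$ whose interior $A$ contains $\overline{U}$, observes that extension by zero gives a continuous inclusion $W^{1,2}_0(U,g|_U)\hookrightarrow W^{1,2}_0(A,g|_A)$, and then invokes the Sobolev embedding theorem for compact manifolds with boundary (Aubin, Th.\ 2.30) to get $W^{1,2}_0(A,g|_A)\hookrightarrow L^{\frac{2m}{m-2}}(A,g|_A)$; composing and noting that the $L^{\frac{2m}{m-2}}$-norms over $U$ and $A$ agree for functions supported in $U$ finishes the proof. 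What you do instead is unwind the content of that cited theorem: a finite atlas covering $\overline{U}$, a subordinate partition of unity, two-sided bounds on the metric coefficients on each compact chart closure, the Euclidean Gagliardo--Nirenberg--Sobolev inequality applied to $(\chi_i f)\circ\phi_i^{-1}\in C^{\infty}_c(\mathbb{R}^m)$, and the Leibniz rule to absorb the $|d\chi_i|$ terms into the $L^2$-norm of $f$. All the steps are sound: $\chi_i f$ does have compact support in $V_i\cap U$, the density/Cauchy argument at the end correctly upgrades the estimate from $C^{\infty}_c(U)$ to all of $W^{1,2}_0(U,g|_U)$, and the hypothesis $m>2$ enters exactly where you say it does. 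What the paper's approach buys is brevity and a clean black-boxing of the analytic core; what yours buys is independence from the reference and an explicit record of where compactness of $\overline{U}$ is used (finiteness of the cover and uniform metric comparability), which is the same mechanism the paper relies on implicitly through the existence of $N$.
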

\begin{proof}
Since $\overline{U}$ is compact  there exists a compact submanifold with boundary $N\subset M$ such that $\overline{U}\subset A$ where with $A$ we label the interior of $N$. Therefore we have a continuous inclusion $W^{1,2}_0(U,g|_{U})\hookrightarrow W^{1,2}_0(A,g|_{A})$. Moreover, according to \cite{TAU} Th. 2.30,  we have a continuous inclusion $W^{1,2}_0(A,g|_{A})\hookrightarrow L^{\frac{2m}{m-2}}(A,g|_A).$ Therefore the statement of this proposition follows  composing the two continuous inclusions $$W^{1,2}_0(U,g|_{U})\hookrightarrow W^{1,2}_0(A,g|_{A})\hookrightarrow L^{\frac{2m}{m-2}}(A,g|_A)$$ and noticing that  $\|f\|_{L^{\frac{2m}{m-2}}(U,g|_U)}=\|f\|_{L^{\frac{2m}{m-2}}(A,g|_A)}$ for each $f\in W^{1,2}_0(U,g|_U)$.
\end{proof}

\begin{prop}
\label{dardo}
Let $(M,g)$ be an open and possibly incomplete Riemannian manifold of dimension $m$. Let $Y\subset M$ be a submanifold of $M$ and let $h$ be the Riemannian metric on $Y$ given by $h:=i_Y^*g$ where $i_Y:Y\rightarrow M$ is the canonical inclusion. Let $\omega\in \Omega^k(M)\cap L^{\infty}\Omega^k(M,g)$. Then $i_Y^*\omega\in L^{\infty}\Omega^{k}(Y,h)$ and $$\|i_Y^*\omega\|_{L^{\infty}\Omega^{k}(Y,h)}\leq \|\omega\|_{L^{\infty}\Omega^{k}(M,g)}.$$
\end{prop}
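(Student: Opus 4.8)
The plan is to reduce the $L^{\infty}$-estimate to a pointwise linear-algebra inequality and then to pass from pointwise bounds to $L^{\infty}$-norms using the smoothness of $\omega$. The key pointwise claim is that for every $p\in Y$ one has
\begin{equation}
|i_Y^*\omega|_{h,p}\leq |\omega|_{g,p},
\end{equation}
where on the left the norm is the one induced by $h_p=g_p|_{T_pY}$ on $\Lambda^k T_p^*Y$ and on the right the norm is the one induced by $g_p$ on $\Lambda^k T_p^*M$. Once this is established, the statement will follow by taking suprema.

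To prove the pointwise inequality I would fix $p\in Y$ and choose a $g_p$-orthonormal basis $e_1,\dots,e_m$ of $T_pM$ adapted to the orthogonal splitting $T_pM=T_pY\oplus (T_pY)^{\perp_g}$, so that $e_1,\dots,e_n$ (with $n=\dim Y$) form an $h_p$-orthonormal basis of $T_pY$; this is possible precisely because $h_p$ is the restriction of $g_p$. Letting $e^1,\dots,e^m$ be the dual coframe, the wedge products $e^I=e^{i_1}\wedge\dots\wedge e^{i_k}$ over increasing multi-indices $I$ form a $g_p$-orthonormal basis of $\Lambda^k T_p^*M$, while those $e^I$ with $I\subseteq\{1,\dots,n\}$ restrict under $i_Y^*$ to an $h_p$-orthonormal basis of $\Lambda^k T_p^*Y$. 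Since $e^i|_{T_pY}=0$ for $i>n$, writing $\omega_p=\sum_I a_I e^I$ gives $i_Y^*\omega_p=\sum_{I\subseteq\{1,\dots,n\}}a_I\,i_Y^*e^I$, whence
\begin{equation}
|i_Y^*\omega|_{h,p}^2=\sum_{I\subseteq\{1,\dots,n\}}a_I^2\leq \sum_I a_I^2=|\omega|_{g,p}^2,
\end{equation}
establishing the pointwise inequality.

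Finally I would pass to the global estimate. Since $\omega\in\Omega^k(M)$ is smooth, $|\omega|_g$ is continuous on $M$; because $\dvol_g$ assigns positive measure to every nonempty open set, the essential supremum of $|\omega|_g$ coincides with its ordinary supremum, so that $\|\omega\|_{L^{\infty}\Omega^k(M,g)}=\sup_{p\in M}|\omega|_{g,p}$. The same remark applies on $(Y,h)$ to the smooth form $i_Y^*\omega$. Combining these identities with the pointwise inequality yields
\begin{equation}
\|i_Y^*\omega\|_{L^{\infty}\Omega^k(Y,h)}=\sup_{p\in Y}|i_Y^*\omega|_{h,p}\leq \sup_{p\in Y}|\omega|_{g,p}\leq \sup_{p\in M}|\omega|_{g,p}=\|\omega\|_{L^{\infty}\Omega^k(M,g)},
\end{equation}
which in particular shows $i_Y^*\omega\in L^{\infty}\Omega^k(Y,h)$ and gives the claimed bound.

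The only genuinely delicate point is this last step: since $Y$ typically has $\dvol_g$-measure zero in $M$, one cannot directly restrict an $L^{\infty}$-bound on $M$ to $Y$, and it is the smoothness (hence continuity) of $\omega$, together with the full support of the Riemannian volume, that allows one to identify the essential supremum with the pointwise supremum and thereby control the values of $\omega$ at the points of $Y$. The pointwise inequality itself is elementary linear algebra once the adapted orthonormal frame is chosen.
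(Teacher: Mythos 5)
Your proof is correct and follows essentially the same route as the paper, which simply invokes the orthogonal decomposition $T^*M|_Y=T^*Y\oplus N^*_Y$ and the resulting "easy calculation of linear algebra" that you carry out explicitly with the adapted orthonormal coframe. Your additional care in identifying the essential supremum with the pointwise supremum (via continuity of the smooth form and full support of the volume measure) fills in a detail the paper leaves implicit, but it is not a different method.
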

\begin{proof}
This follows decomposing $T^*M|_Y$ as $T^*M|_Y=T^*Y\oplus N^*_Y$, with $N^*_Y=(T^*_Y)^{\bot}$, and then performing an easy calculation of linear algebra. See for instance the remark in \cite{PS} page  611.
\end{proof}
%

In the remaining part of this introductory section we specialize in the case of complex manifolds. Our aim here is to  introduce   some notations and to recall  some results from the general theory of Hilbert complexes applied to the Dolbeault complex.  We invite the reader to consult  \cite{BL} for the proofs. Assume  that $(M,g)$ is a complex manifold of real dimension $2m$.  As usual with $\Lambda^{p,q}(M)$ we denote the bundle $\Lambda^p(T^{1,0}M)^*\otimes \Lambda^q(T^{0,1}M)^*$ and by $\Omega^{p,q}(M)$, $\Omega^{p,q}_c(M)$ we denote respectively the space of sections, sections with compact support,  of $\Lambda^{p,q}(M)$. On the bundle $\Lambda^{p,q}(M)$ we consider the Hermitian metric induced by $g$ and with a little abuse of notation we still label it by $g$. With $L^2\Omega^{p,q}(M,g)$ we denote the Hilbert space of $L^2$-$(p,q)$-forms. The Dolbeault operator acting on $(p,q)$-forms is labeled by $\overline{\partial}_{p,q}:\Omega^{p,q}(M)\rightarrow \Omega^{p,q+1}(M)$ and similarly  we have the operator $\partial_{p,q}:\Omega^{p,q}(M)\rightarrow \Omega^{p+1,q}(M)$. When we look at $\overline{\partial}_{p,q}:L^2\Omega^{p,q}(M,g)\rightarrow L^2\Omega^{p,q+1}(M,g)$ as an unbounded and densely defined operator with domain $\Omega_c^{p,q}(M)$ we label by $\overline{\partial}_{p,q,\max/\min}:L^2\Omega^{p,q}(M,g)\rightarrow L^2\Omega^{p,q+1}(M,g)$ respectively its maximal and minimal extension. Analogous meaning has the notation $\partial_{p,q,\max/\min}:L^2\Omega^{p,q}(M,g)\rightarrow L^2\Omega^{p+1,q}(M,g)$. In the case of functions we will simply write $\overline{\partial}:C^{\infty}(M)\rightarrow \Omega^{0,1}(M)$, $\overline{\partial}_{\max/\min}:L^2(M,g)\rightarrow L^2\Omega^{0,1}(M,g)$ and analogously $\partial:C^{\infty}(M)\rightarrow \Omega^{1,0}(M)$ and  $\partial_{\max/\min}:L^2(M,g)\rightarrow L^2\Omega^{1,0}(M,g)$. With $\overline{\partial}_{p,q}^t:\Omega^{p,q+1}_c(M)\rightarrow \Omega^{p,q}_c(M)$ and $\partial_{p,q}^t:\Omega^{p+1,q}_c(M)\rightarrow \Omega^{p,q}_c(M)$ we mean the formal adjoint of $\overline{\partial}_{p,q}:\Omega^{p,q}_c(M)\rightarrow \Omega^{p,q+1}_c(M)$ and $\partial_{p,q}:\Omega^{p,q}_c(M)\rightarrow \Omega^{p+1,q}_c(M)$ respectively. It is easy to check that if $\omega\in \mathcal{D}(\overline{\partial}_{p,q,\max/\min})$ then $\overline{\partial}_{p,q,\max/\min}\omega\in \ker(\overline{\partial}_{p,q+1,\max/\min})$. In this way we get two complexes, the minimal Dolbeault complex and maximal Dolbeault complex, which are in particular Hilbert complexes in the sense of Br\"uning-Lesch \cite{BL}. The maximal and the minimal $L^2$-$\overline{\partial}$-cohomology of $(M,g)$ is   respectively the cohomology  of the maximal and the minimal Dolbeault complex, that is:  
\begin{equation}
\label{chimar}
H^{p,q}_{2,\overline{\partial}_{\max}}(M,g):=\frac{\ker(\overline{\partial}_{p,q,\max})}{\im(\overline{\partial}_{p,q-1,\max})}\ \text{and}\ H^{p,q}_{2,\overline{\partial}_{\min}}(M,g):=\frac{\ker(\overline{\partial}_{p,q,\min})}{\im(\overline{\partial}_{p,q-1,\min})}.
\end{equation}
It is not difficult to see that if $H^{p,q}_{2,\overline{\partial}_{\max}}(M,g)$ is finite dimensional then $\im(\overline{\partial}_{p,q-1,\max})$ is closed and analogously if $H^{p,q}_{2,\overline{\partial}_{\min}}(M,g)$ is finite dimensional then $\im(\overline{\partial}_{p,q-1,\min})$ is closed. Also in this case we remark that analogous definitions and constructions hold for the complexes associated to $\partial_{p,q,\max/\min}$. Consider now the Hodge-Kodaira Laplacian  $$\Delta_{\overline{\partial},p,q}:\Omega^{p,q}_c(M)\rightarrow \Omega^{p,q}_c(M),\ \Delta_{\overline{\partial},p,q}:=\overline{\partial}_{p,q-1}\circ\overline{\partial}^t_{p,q-1}+\overline{\partial}_{p,q}^t\circ \overline{\partial}_{p,q}.$$
In the case of functions, that is $(p,q)=(0,0)$, we will simply write $\Delta_{\overline{\partial}}:C^{\infty}_c(M)\rightarrow C^{\infty}_c(M)$.  We recall the definition of  two important self-adjoint extensions of $\Delta_{\overline{\partial},p,q}$:
\begin{equation}
\label{asdf}
\overline{\partial}_{p,q-1,\max}\circ \overline{\partial}_{p,q-1,\min}^t+\overline{\partial}_{p,q,\min}^t\circ \overline{\partial}_{p,q,\max}:L^2\Omega^{p,q}(M,g)\rightarrow L^2\Omega^{p,q}(M,g)
\end{equation} 
and 
\begin{equation}
\label{buio}
\overline{\partial}_{p,q-1,\min}\circ \overline{\partial}_{p,q-1,\max}^t+\overline{\partial}_{p,q,\max}^t\circ \overline{\partial}_{p,q,\min}:L^2\Omega^{p,q}(M,g)\rightarrow L^2\Omega^{p,q}(M,g)
\end{equation}
called respectively the absolute and the relative extension. The operator \eqref{asdf}, the absolute extension, is labeled in general with  $\Delta_{\overline{\partial},p,q,\abs}$ and its domain is given by $$\mathcal{D}(\Delta_{\overline{\partial},p,q,\abs})=\left\{\omega\in \mathcal{D}(\overline{\partial}_{p,q,\max})\cap \mathcal{D}(\overline{\partial}_{p,q-1,\min}^t):\overline{\partial}_{p,q,\max}\omega \in \mathcal{D}(\overline{\partial}^t_{p,q,\min})\ \text{and}\ \overline{\partial}_{p,q-1,\min}^t\omega \in \mathcal{D}(\overline{\partial}_{p,q-1,\max})\right\}.$$
The operator \eqref{buio}, the relative extension,  is labeled in general with  $\Delta_{\overline{\partial},p,q,\rel}$ and its domain is given by $$\mathcal{D}(\Delta_{\overline{\partial},p,q,\rel})=\left\{\omega\in \mathcal{D}(\overline{\partial}_{p,q,\min})\cap \mathcal{D}(\overline{\partial}_{p,q-1,\max}^t):\overline{\partial}_{p,q,\min}\omega \in \mathcal{D}(\overline{\partial}^t_{p,q,\max})\ \text{and}\ \overline{\partial}_{p,q-1,\max}^t\omega \in \mathcal{D}(\overline{\partial}_{p,q-1,\min})\right\}.$$
Briefly $\Delta_{\overline{\partial},p,q,\abs}$ are the Laplacians associated to the maximal Dolbeault complex while $\Delta_{\overline{\partial},p,q,\rel}$ are the Laplacians associated to the minimal Dolbeault complex.
Consider now the Hodge-Dolbeault operator $\overline{\partial}_{p}+\overline{\partial}^t_{p}:\Omega_c^{p,\bullet}(M)\rightarrow \Omega^{p,\bullet}_c(M)$ where  $\Omega^{p,\bullet}_c(M)$ stands for $\bigoplus_{q=0}^m\Omega_c^{p,q}(M)$. We can define two self-adjoint extensions  of $\overline{\partial}_p+\overline{\partial}^t_p$ taking  
\begin{equation}
\label{twoself}
\overline{\partial}_{p,\max}+\overline{\partial}^t_{p,\min}:L^2\Omega^{p,\bullet}(M,g)\rightarrow L^2\Omega^{p,\bullet}(M,g)
\end{equation}
\begin{equation} 
\label{twoselfs}
\overline{\partial}_{p,\min}+\overline{\partial}^t_{p,\max}:L^2\Omega^{p,\bullet}(M,g)\rightarrow L^2\Omega^{p,\bullet}(M,g)
\end{equation}
where clearly $L^2\Omega^{p,\bullet}(M,g)=\bigoplus_{q=0}^mL^2\Omega^{p,q}(M,g)$. The domain of $\overline{\partial}_{p,\max}+\overline{\partial}^t_{p,\min}$ is given by $\mathcal{D}(\overline{\partial}_{p,\max})\cap\mathcal{D}(\overline{\partial}^t_{p,\min})$ where $\mathcal{D}(\overline{\partial}_{p,\max})=\bigoplus_{q=0}^m\mathcal{D}(\overline{\partial}_{p,q,\max})$ and $\mathcal{D}(\overline{\partial}^t_{p,\min})=\bigoplus_{q=0}^m\mathcal{D}(\overline{\partial}^t_{p,q,\min})$. Analogously the domain of $\overline{\partial}_{p,\min}+\overline{\partial}^t_{p,\max}$ is given by  $\mathcal{D}(\overline{\partial}_{p,\min})\cap\mathcal{D}(\overline{\partial}^t_{p,\max})$ where $\mathcal{D}(\overline{\partial}_{p,\min})=\bigoplus_{q=0}^m\mathcal{D}(\overline{\partial}_{p,q,\min})$ and $\mathcal{D}(\overline{\partial}^t_{p,\max})=\bigoplus_{q=0}^m\mathcal{D}(\overline{\partial}^t_{p,q,\max})$. In this case we can look at $\overline{\partial}_{p,\min/\max}+\overline{\partial}^t_{p,\max/\min}$  as the Dirac operator associated  to the minimal/maximal Dolbeault complex respectively. 
Clearly, if we replace the operator $\overline{\partial}_{p,q}$ with $\partial_{p,q}$, then we get the analogous definitions and properties for the operators $\partial_{p,q,\max/\min}$, $\Delta_{\partial,p,q}$, $\Delta_{\partial,p,q,\abs/\rel}$, $\partial_{q}+\partial^t_{q}$ and $\partial_{q,\max/\min}+\partial^t_{q,\min/\max}$. 
Moreover, according to \cite{Huy} page 116, we have 
\begin{equation}
\label{cicci}
\partial^t_{p,q}=-*\overline{\partial}_{m-q,m-p-1}*\ \text{and}\ \overline{\partial}^t_{p,q}=-*\partial_{m-q-1,m-p}*
\end{equation}
and from these relations we easily get that 
\begin{equation}
\label{cicuta}
\partial^t_{p,q,\max/\min}=-*\overline{\partial}_{m-q,m-p-1,\max/\min}*\ \text{and}\ \overline{\partial}^t_{p,q,\max/\min}=-*\partial_{m-q-1,m-p,\max/\min}*
\end{equation}
where $*:L^2\Omega^{p,q}(M,g)\rightarrow L^2\Omega^{m-q,m-p}(M,g)$ is the unitary operator induced by the  Hodge star operator. Now let us label by 
\begin{equation}
\label{conj}
c: T^{1,0}M\rightarrow T^{0,1}M
\end{equation}
the $\mathbb{C}$-antilinear map given by  complex conjugation. With a little abuse of notation we label again by
\begin{equation}
\label{conju}
c:\Lambda^{p,q}(M)\rightarrow \Lambda^{q,p}(M)
\end{equation}
 the natural map induced by \eqref{conj} and, still with a little abuse of notation, we label by $c$ the induced map on $(p,q)$-forms, that is
\begin{equation}
\label{conjform}
 c:\Omega^{p,q}(M)\rightarrow \Omega^{q,p}(M).
\end{equation} 
Clearly both \eqref{conju} and \eqref{conjform} are  $\mathbb{C}$-antilinear isomorphisms whose corresponding inverse is given by 
$c:\Lambda^{q,p}(M)\rightarrow \Lambda^{p,q}(M)$ and  $c:\Omega^{q,p}(M)\rightarrow \Omega^{p,q}(M)$ respectively.
Moreover \eqref{conjform} induces an isomorphism 
\begin{equation}
\label{conjformcs}
 c|_{\Omega^{p,q}_c(M)}:\Omega_c^{p,q}(M)\rightarrow \Omega^{q,p}_c(M).
\end{equation} 
 We have the following well known properties:

\begin{prop}
\label{comconj}
In the setting described above. On $\Omega^{p,q}(M)$  the following properties hold true: 
\begin{enumerate}
\item $\overline{\partial}_{p,q}=c\circ \partial_{q,p}\circ c.$
\item $\overline{\partial}_{p,q}^t=c\circ \partial_{q,p}^t\circ c$
\end{enumerate}
\end{prop}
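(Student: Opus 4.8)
The plan is to derive both identities from two structural facts about the conjugation map $c$: that it is a $\mathbb{C}$-antilinear involution intertwining the Dolbeault operators, and that it acts as an antilinear isometry on the Hermitian bundles $\Lambda^{p,q}(M)$. Everything is an identity of differential operators on smooth forms, so it suffices to work on $\Omega^{p,q}(M)$ (and on $\Omega^{p,q}_c(M)$ for the adjoint statement).

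First I would establish part (1) directly. The key input is that the de Rham differential $d$ has real coefficients and hence commutes with conjugation, $c\circ d=d\circ c$. Writing $d=\partial+\overline{\partial}$ and applying $d$ to $c\omega\in\Omega^{q,p}(M)$ for $\omega\in\Omega^{p,q}(M)$, I would compare $d(c\omega)=\partial_{q,p}(c\omega)+\overline{\partial}_{q,p}(c\omega)$ with $c(d\omega)=c(\partial_{p,q}\omega)+c(\overline{\partial}_{p,q}\omega)$ and match the two sides by bidegree. The $(q+1,p)$-component gives $\partial_{q,p}\circ c=c\circ\overline{\partial}_{p,q}$ and the $(q,p+1)$-component gives $\overline{\partial}_{q,p}\circ c=c\circ\partial_{p,q}$. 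Composing the first relation on the left with $c$ and using that $c$ is an involution, $c^2=\id$, rearranges it to exactly $\overline{\partial}_{p,q}=c\circ\partial_{q,p}\circ c$, which is statement (1). A careful bookkeeping of bidegrees is the only thing to watch at this stage.

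For part (2) I would verify the integration-by-parts identity that characterizes the formal adjoint. The second structural fact needed is that $c$ is an antilinear isometry of the Hermitian metric, in the precise form $g(c\alpha,c\beta)=\overline{g(\alpha,\beta)}$, which is immediate in a local unitary frame and is independent of the chosen convention for linearity of $g$. Combined with $c^2=\id$ this yields the working identity $g(cX,Y)=\overline{g(X,cY)}$. Given $\alpha\in\Omega^{p,q}_c(M)$ and $\beta\in\Omega^{p,q+1}_c(M)$, I would start from $\int_M g(\overline{\partial}_{p,q}\alpha,\beta)\,\dvol$, substitute $\overline{\partial}_{p,q}=c\circ\partial_{q,p}\circ c$ from (1), and then repeatedly pass the conjugations across the metric using $g(cX,Y)=\overline{g(X,cY)}$, invoking in the middle the already-available formal-adjoint relation for $\partial_{q,p}$ (here the bidegrees line up so that $\partial_{q,p}$ and $\partial^t_{q,p}$ act between the correct spaces $\Omega^{q,p}$ and $\Omega^{q+1,p}$). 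After the conjugates are moved through, the expression collapses to $\int_M g(\alpha,(c\circ\partial^t_{q,p}\circ c)\beta)\,\dvol$, and uniqueness of the formal adjoint forces $\overline{\partial}^t_{p,q}=c\circ\partial^t_{q,p}\circ c$, which is statement (2).

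I expect the only genuine obstacle to be the antilinearity of $c$: each use of the isometry relation introduces a complex conjugate, and one must check that these conjugates pair off and cancel so that the final identity carries no stray conjugation (in particular that the two applications of $g(cX,Y)=\overline{g(X,cY)}$ together with the single conjugation coming from the $\partial$-adjoint step combine consistently). Tracking the bidegrees so that every operator acts between the intended spaces is routine but must be done consistently throughout.
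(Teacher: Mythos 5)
Your proposal is correct, and for the second identity it takes a genuinely different route from the paper. For part (1) you reproduce the standard argument (decompose $d=\partial+\overline{\partial}$, use that $d$ is real so $c\circ d=d\circ c$, and match bidegree components); the paper simply cites Wells, Prop.\ 3.6, for this, so there is no real divergence there. For part (2), however, the paper deduces the adjoint identity formally from the Hodge star formulas in \eqref{cicci}, namely $\overline{\partial}^t_{p,q}=-*\partial_{m-q-1,m-p}*$ and $\partial^t_{q,p}=-*\overline{\partial}_{m-p,m-q-1}*$, together with point (1) and the fact that $*$ commutes with $c$; this is a three-line symbolic manipulation once \eqref{cicci} is on record. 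You instead verify the defining integration-by-parts property of the formal adjoint directly, using the pointwise antilinear isometry $g(c\alpha,c\beta)=\overline{g(\alpha,\beta)}$ (equivalently $g(cX,Y)=\overline{g(X,cY)}$, via $c^2=\id$) and the known adjoint relation for $\partial_{q,p}$. Your bookkeeping of the conjugations is sound: the two applications of the isometry relation each introduce a conjugate, these cancel, and the adjoint step for $\partial_{q,p}$ sits between them with the bidegrees lining up correctly, so uniqueness of the formal adjoint gives the claim with no stray conjugation. What your approach buys is self-containedness --- it does not require the duality formulas \eqref{cicci} or any property of the Hodge star --- at the cost of being slightly longer; the paper's route is shorter but leans on machinery it has already set up for other purposes.
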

\begin{proof}
For the first point see for instance \cite{Wells} Prop. 3.6. The second point follows using the first point, \eqref{cicci} and the fact that the Hodge star operator commutes with the complex conjugation.
\end{proof}

Now consider again $M$ endowed with a Hermitian metric $g$.  For each $\omega\in \Omega_c^{p,q}(M)$ it is easy to check that 
$g(\omega,\omega)=g(c(\omega), c(\omega))$. Using this  equality and the other properties recalled above, we easily get that $c$ induces a $\mathbb{C}$-antilinear operator 
\begin{equation}
\label{conjhil}
c:L^2\Omega^{p,q}(M,g)\rightarrow L^2\Omega^{q,p}(M,g)
\end{equation} 
which is bijective, continuous, with continuous inverse given by $c:L^2\Omega^{q,p}(M,g)\rightarrow L^2\Omega^{p,q}(M,g)$ and such that $\|\eta\|_{L^2\Omega^{p,q}(M,g)}=\|c(\eta)\|_{L^2\Omega^{q,p}(M,g)}$ for each $\eta\in L^2\Omega^{p,q}(M,g)$. 
Finally we conclude this introductory section with the following proposition.
\begin{prop}
\label{occhiodibue}
In the setting described above. The following properties hold true:
\begin{enumerate}
\item $c\left(\mathcal{D}(\overline{\partial}_{p,q,\max})\right)=\mathcal{D}(\partial_{q,p\max})$ and\  $\overline{\partial}_{p,q,\max}=c\circ \partial_{q,p,\max}\circ c.$
\item  $c\left(\mathcal{D}(\overline{\partial}_{p,q,\min})\right)=\mathcal{D}(\partial_{q,p\min})$ and\  $\overline{\partial}_{p,q,\min}=c\circ \partial_{q,p,\min}\circ c.$
\item $c\left(\mathcal{D}(\overline{\partial}^t_{p,q,\max})\right)=\mathcal{D}(\partial_{q,p\max}^t)$ and\  $\overline{\partial}_{p,q,\max}^t=c\circ \partial_{q,p,\max}^t\circ c.$
\item $c\left(\mathcal{D}(\overline{\partial}^t_{p,q,\min})\right)=\mathcal{D}(\partial_{q,p\min}^t)$ and\  $\overline{\partial}_{p,q,\min}^t=c\circ \partial_{q,p,\min}^t\circ c.$
\item $*\left(\mathcal{D}(\Delta_{\overline{\partial},p,q,\abs})\right)=\mathcal{D}(\Delta_{\partial,m-q,m-p,\rel})$ and 
$*\circ  \Delta_{\overline{\partial},p,q,\abs}=\Delta_{\partial,m-q,m-p,\rel}\circ *.$
\item $*\left(c(\mathcal{D}(\Delta_{\overline{\partial},p,q,\abs}))\right)=\mathcal{D}(\Delta_{\overline{\partial},m-p,m-q,\rel})$ and 
$*\circ c\circ \Delta_{\overline{\partial},p,q,\abs}=\Delta_{\overline{\partial},m-p,m-q,\rel}\circ *\circ c.$
\end{enumerate}
\end{prop}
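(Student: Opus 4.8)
The plan is to treat the six points in three groups and, throughout, to transport the \emph{definitions} of the various closed extensions through the isometries $c$ and $*$ rather than manipulating Hilbert-space adjoints, which is delicate for the antilinear operator $c$. The facts I would use at the outset are that, by \eqref{conjhil} and \eqref{conjformcs}, $c$ is a bijective antilinear $L^2$-isometry with $c\circ c=\id$ which restricts to a bijection $\Omega^{p,q}_c(M)\to\Omega^{q,p}_c(M)$, so that $\langle c\alpha,c\beta\rangle=\overline{\langle\alpha,\beta\rangle}$; and that $*$ is unitary with $*\circ*=\pm\id$. I begin with the minimal extensions in points (2) and (4), which are graph-norm closures of $\Omega^{p,q}_c(M)$. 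By Prop.~\ref{comconj}(1) and the isometry property of $c$ one checks that $c$ carries the graph norm of $\overline{\partial}_{p,q}$ on $\Omega^{p,q}_c(M)$ isometrically onto the graph norm of $\partial_{q,p}$ on $\Omega^{q,p}_c(M)$; passing to closures yields $c(\mathcal{D}(\overline{\partial}_{p,q,\min}))=\mathcal{D}(\partial_{q,p,\min})$ together with the intertwining identity, and $c\circ c=\id$ upgrades the inclusion to an equality. The same argument applied to Prop.~\ref{comconj}(2) gives (4).

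For the maximal extensions in points (1) and (3) I would use the distributional characterization. Given $\omega\in\mathcal{D}(\overline{\partial}_{p,q,\max})$ and a test form $\psi\in\Omega^{q+1,p}_c(M)$, set $\phi=c\psi$; using $\partial^t_{q,p}=c\circ\overline{\partial}^t_{p,q}\circ c$ (which is Prop.~\ref{comconj}(2) after conjugating by $c$) together with the antilinear inner-product identity, the defining integral identity for $\omega$ transforms into the defining identity for $c\omega$ with datum $c(\overline{\partial}_{p,q,\max}\omega)$. Hence $c\omega\in\mathcal{D}(\partial_{q,p,\max})$ and $\partial_{q,p,\max}\circ c=c\circ\overline{\partial}_{p,q,\max}$; once more $c\circ c=\id$ gives the reverse inclusion, proving (1), and the analogous argument with Prop.~\ref{comconj}(1) yields (3).

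For point (5) I would substitute the identities \eqref{cicuta} into the definition \eqref{asdf} of $\Delta_{\overline{\partial},p,q,\abs}$. Since each summand of a Hodge--Kodaira Laplacian is a product of two conjugate first-order operators, the signs $-1$ and the factors $*\circ*=\pm\id$ cancel in pairs, and the degree bookkeeping $*\colon L^2\Omega^{p,q}\to L^2\Omega^{m-q,m-p}$ shows that $*$-conjugation sends $\overline{\partial}_{\,\cdot\,,\max}\mapsto\partial^t_{\,\cdot\,,\max}$ and $\overline{\partial}^t_{\,\cdot\,,\min}\mapsto\partial_{\,\cdot\,,\min}$. Thus $*\circ\Delta_{\overline{\partial},p,q,\abs}$ agrees with $\Delta_{\partial,m-q,m-p,\rel}\circ *$, the relative Laplacian \eqref{buio} for $\partial$ in bidegree $(m-q,m-p)$, and the same term-by-term correspondence matches the intersection-of-domains conditions defining $\mathcal{D}(\Delta_{\overline{\partial},p,q,\abs})$ with those defining $\mathcal{D}(\Delta_{\partial,m-q,m-p,\rel})$.

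For point (6), from points (2) and (3) I would first derive the purely-conjugation identity $\Delta_{\partial,p',q',\rel}=c\circ\Delta_{\overline{\partial},q',p',\rel}\circ c$ by substituting $\overline{\partial}_{p,q,\min}=c\,\partial_{q,p,\min}\,c$ and $\overline{\partial}^t_{p,q,\max}=c\,\partial^t_{q,p,\max}\,c$ into \eqref{buio}; combining this (with $p'=m-q$, $q'=m-p$) with point (5) and the standard commutation $*\circ c=c\circ*$ gives $*\circ c\circ\Delta_{\overline{\partial},p,q,\abs}=\Delta_{\overline{\partial},m-p,m-q,\rel}\circ*\circ c$ and the corresponding domain identity. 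The genuine work, and the step I expect to be the main obstacle, is the index-and-sign bookkeeping in point (5), where \eqref{cicuta} introduces Hodge stars, their inverses and minus signs that must cancel, and — more importantly — the verification that $*$ (and in (6) also $c$) carries the \emph{domain} conditions of the absolute Laplacian precisely onto those of the relative one, rather than merely intertwining the formal expressions. I would organize this by transporting each of the four building-block domains ($\mathcal{D}(\overline{\partial}_{\max})$, $\mathcal{D}(\overline{\partial}^t_{\min})$ and the two second-order conditions) separately, so that the matching of $\mathcal{D}(\Delta_{\cdots,\abs})$ with $\mathcal{D}(\Delta_{\cdots,\rel})$ becomes immediate; a secondary pitfall is the antilinearity of $c$, which I sidestep by transporting the distributional and graph definitions directly instead of dualizing.
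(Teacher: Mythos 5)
Your proposal is correct and follows exactly the route the paper intends: its proof of Prop.~\ref{occhiodibue} is the one-line observation that everything ``follows immediately by \eqref{cicuta}, Prop.~\ref{comconj} and the properties of \eqref{conjhil}'', and your argument is precisely a careful spelling-out of those three ingredients (transporting graph closures and distributional domains through the antilinear isometry $c$, and conjugating by $*$ via \eqref{cicuta}). No discrepancy to report.
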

\begin{proof}
This follows immediately by \eqref{cicuta}, Prop. \ref{comconj} and the properties of \eqref{conjhil}.
\end{proof}

\section{Main results}
This section contains the main results of this paper. We address the question of the existence of self-adjoint extensions with discrete spectrum for both the Laplace-Beltrami operator and the Hodge-Kodaira Laplacian acting on functions in the setting of compact and irreducible Hermitian complex spaces. We will show that in both cases the  Friedrichs extension has discrete spectrum. Moreover we will provide an estimate for the growth of the corresponding eigenvalues. For the definition and the main properties of the Friedrichs extension we refer to \cite{MaMa}. Concerning the topic of complex  spaces  we refer to \cite{GeFi} and \cite{GrRe}. Here we recall that an irreducible complex space $X$ is a reduced complex space such that $\reg(X)$, the regular part of $X$, is connected. Furthermore we recall  that a paracompact  and reduced complex space $X$ is said \emph{Hermitian} if  the regular part of $X$ carries a Hermitian metric $h$  such that for every point $p\in X$ there exists an open neighborhood $U\ni p$ in $X$, a proper holomorphic embedding of $U$ into a polydisc $\phi: U \rightarrow \mathbb{D}^N\subset \mathbb{C}^N$ and a Hermitian metric $g$ on $\mathbb{D}^N$ such that $(\phi|_{\reg(U)})^*g=h$, see for instance \cite{Ohsa} or \cite{JRu}. In this case we will write $(X,h)$ and with a little abuse of language we will say that $h$ is a Hermitian metric on $X$. Clearly any analytic sub-variety of a complex Hermitian manifold endowed with the metric induced by the restriction of the  metric of the ambient space is a Hermitian complex space. In particular, within this class of examples, we have any complex projective variety $V\subset \mathbb{C}\mathbb{P}^n$ endowed  with the K\"ahler metric induced by the Fubini-Study metric of $\mathbb{C}\mathbb{P}^n$. We are in position to state the first result of this section. It is a Sobolev embedding theorem in the setting of compact and irreducible Hermitian complex space.
\begin{teo}
\label{dollar}
Let $(X,h)$ be a compact and irreducible Hermitian complex space of complex dimension $v$. Then we have the following properties:
\begin{enumerate}
\item $W^{1,2}_0(\reg(X),h)=W^{1,2}(\reg(X),h)$.
\item Assume that $v>1$. Then we have a continuous inclusion $W^{1,2}(\reg(X),h)\hookrightarrow L^{\frac{2v}{v-1}}(\reg(X),h)$.
\item Assume that $v>1$. Then we have a compact inclusion  $W^{1,2}(\reg(X),h)\hookrightarrow L^2(\reg(X),h)$.
\end{enumerate}
\end{teo}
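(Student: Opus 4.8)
The plan is to establish the three assertions in the stated order, since each builds on the previous one. Throughout I regard $\reg(X)$ as an open Riemannian manifold of real dimension $m=2v$, I use that its total volume $\vol_h(\reg(X))$ is finite (the restriction of a smooth Hermitian metric to an analytic variety has locally finite volume, and $X$ is compact), and I exploit that near each point $X$ embeds as an analytic subvariety of a polydisc $\mathbb{D}^N\subset\mathbb{C}^N$ with $h$ the restriction of a Hermitian metric $g$ quasi-isometric, on relatively compact pieces, to the flat metric $g_0$. For assertion (1) it suffices to show $\mathcal{D}(d_{\min})=\mathcal{D}(d_{\max})$, i.e.\ that $C^\infty_c(\reg(X))$ is dense in $W^{1,2}(\reg(X),h)=\mathcal{D}(d_{\max})$. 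The engine is a family of logarithmic cutoffs killing a neighbourhood of $\sing(X)$ at little energy cost. Covering $\sing(X)$ by finitely many charts, let $\rho$ be the ambient distance to the singular locus and set $\phi_\epsilon=1$ for $\rho\geq\epsilon$, $\phi_\epsilon=0$ for $\rho\leq\epsilon^2$, and $\phi_\epsilon=2-\log\rho/\log\epsilon$ in between. Since $|d\rho|_h\leq 1$ one gets $|d\phi_\epsilon|^2\leq|\log\epsilon|^{-2}\rho^{-2}$ on the annulus, so
$$\int_{\reg(X)}|d\phi_\epsilon|^2\,\dvol_h\leq\frac{1}{|\log\epsilon|^2}\int_{\{\epsilon^2\leq\rho\leq\epsilon\}}\frac{\dvol_h}{\rho^2}.$$
The decisive point is the tube-volume estimate $\vol_h(\{\rho\leq s\})\leq Cs^2$, valid because $\sing(X)$ is a complex analytic set of real codimension at least two; an integration by parts then bounds the right-hand integral by $C|\log\epsilon|$, whence $\|d\phi_\epsilon\|_{L^2}^2\leq C/|\log\epsilon|\to0$. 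Given these cutoffs I reduce a general $u\in\mathcal{D}(d_{\max})$ to a bounded one by truncation $u_k=\max(-k,\min(u,k))$, observe that $\phi_\epsilon u_k\to u_k$ in $W^{1,2}$ since $\|u_k\,d\phi_\epsilon\|_{L^2}\leq\|u_k\|_{L^\infty}\|d\phi_\epsilon\|_{L^2}\to0$ while $\phi_\epsilon\,du_k\to du_k$ by dominated convergence, and mollify to land in $C^\infty_c(\reg(X))$.

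For assertion (2), thanks to (1) it is enough to prove $\|u\|_{L^{2v/(v-1)}}\leq C(\|u\|_{L^2}+\|du\|_{L^2})$ for $u\in C^\infty_c(\reg(X))$; note $2v/(v-1)=2m/(m-2)$ is exactly the critical Sobolev exponent in real dimension $m=2v$. The key observation is that $\reg(X)$, being a complex submanifold of the flat K\"ahler manifold $(\mathbb{D}^N,g_0)$, is a minimal submanifold of $\mathbb{R}^{2N}$, so the Michael--Simon Sobolev inequality applies to compactly supported functions on it with a constant depending only on $m$. Using a finite cover and a subordinate partition of unity I apply Michael--Simon to each piece $\rho_i u$ (whose support avoids $\sing(X)$), pass from $g_0$ to $h$ by quasi-isometry, and sum up; feeding $f=|u|^{2(m-1)/(m-2)}$ into the $L^1$-version and using H\"older upgrades it to the stated $L^{2m/(m-2)}$ bound.

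For assertion (3) I combine (2) with the finiteness of the volume. Let $\{u_k\}$ be bounded in $W^{1,2}$; by (2) it is bounded in $L^p$ with $p=2v/(v-1)>2$. On the set $A_\delta=\{\rho<\delta\}$ near the singularities, H\"older gives $\int_{A_\delta}|u_k|^2\leq\|u_k\|_{L^p}^2\,\vol_h(A_\delta)^{1-2/p}$, which is uniformly small as $\delta\to0$ because $\vol_h(A_\delta)\to0$. On the complement $\reg(X)\setminus A_\delta$, relatively compact in the smooth manifold $\reg(X)$, the classical Rellich--Kondrachov theorem yields an $L^2$-convergent subsequence. A diagonal argument over $\delta=1/n$, together with the uniform smallness on $A_\delta$, shows the extracted subsequence is Cauchy in $L^2(\reg(X),h)$, which is the asserted compactness.

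The main obstacle is the tube-volume estimate $\vol_h(\{\rho\leq s\})\leq Cs^2$ underlying assertion (1): it is precisely where the analytic (rather than merely metric) nature of $\sing(X)$ must be used and where the behaviour of the induced metric $h$ near the singular locus has to be controlled. Once it is in hand, the remaining work is a fairly standard assembly of the logarithmic-cutoff and truncation scheme, the Michael--Simon inequality for minimal subvarieties, and the Rellich argument.
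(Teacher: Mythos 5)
Your proposal is correct and, for parts (2) and (3), follows essentially the same route as the paper: a local Michael--Simon inequality on the minimal submanifold $\reg(Y)\subset(\mathbb{C}^N,g_0)$ transported to $h$ by quasi-isometry and glued by a partition of unity, and then compactness from the $L^{2v/(v-1)}$ bound, finite volume, H\"older near $\sing(X)$ and Rellich on the complement (the paper outsources this last step to Prop.~4.3 of its reference \cite{FraBei}, but the argument is the one you give). For part (1) the paper simply quotes the parabolicity of $(\reg(X),h)$ from \cite{BeGu}, \cite{JRU} together with Prop.~4.1 of \cite{FraBei}, whereas you unfold that citation into the underlying logarithmic-cutoff construction resting on the tube-volume estimate $\vol_h(\{\rho\le s\})\le Cs^2$; this is exactly the mechanism behind those references, so your version is more self-contained at the cost of having to actually prove the tube estimate (which does hold, by Lelong's local finiteness of the $(2v-2)$-dimensional measure of the analytic set $\sing(X)$ and the local quasi-isometry of $h$ with the ambient flat metric). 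The one place where you are too quick is the gluing step in part (2): you need the partition-of-unity functions to have differentials bounded in $L^\infty\Omega^1(\reg(X),h)$, including for the pieces supported near $\sing(X)$, and a partition of unity on the topological space $X$ does not come with this for free. The paper spends the bulk of its proof on exactly this point, constructing the cutoffs near singular points as restrictions of smooth compactly supported functions on the ambient polydisc (so that Prop.~\ref{dardo} gives the $L^\infty$ gradient bound) and then renormalizing. This is a repairable omission rather than a wrong step, and your ambient-embedding setup already contains the ingredients to fix it.
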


\begin{proof}
According to \cite{BeGu} or \cite{JRU} we know that $(\reg(X),h)$  is  parabolic. Now the \emph{first property} follows by Prop. 4.1 in \cite{FraBei}. 
The proof of the \emph{second property} is divided in two steps. First we prove that a Sobolev embedding exists locally. Then, using a suitable partition of unity, we patch together these local Sobolev embeddings in order to get a global one. So the first goal is to prove the following statement:
\begin{itemize}
\item For every $p\in X$ there exists an open neighborhood $V$ of $p$ such that we have a continuous inclusion $W_0^{1,2}(\reg(V),h|_{\reg(V)})\hookrightarrow L^{\frac{2v}{v-1}}(\reg(V),h|_{\reg(V)})$.
\end{itemize}
If $p\in \reg(X)$ it is clear that the above statement holds true. Therefore we are left to deal with the case $p\in \sing(X)$.
Since $(X,h)$ is a Hermitian complex space we know that  there exists an open neighborhood $U\ni p$ in $X$, a proper holomorphic embedding of $U$ into a polydisc $\phi: U \rightarrow \mathbb{D}^N\subset \mathbb{C}^N$ and a Hermitian metric $g$ on $\mathbb{D}^N$ such that $(\phi|_{\reg(U)})^*g=h|_{\reg(U)}$. Consider an open subset $V$ of $X$ with compact closure such that $p\in V$ and $\overline{V}\subset U$ and let $Y:=\phi(V)$. Let $g'=dz_1^2+...+dz_N^2$ be the standard K\"ahler metric on $\mathbb{C}^N$. Since $\overline{\phi(V)}\subset \mathbb{D}^N$ we can find an open subset $A$ of $\mathbb{C}^N$ with compact closure such that $\overline{Y}\subset A\subset \overline{A}\subset \mathbb{D}^N$. Hence $g|_{A}$ and $g'|_{A}$ are quasi isometric and this in turn implies that $\zeta:= (i|_{\reg(Y)})^*g$  is quasi isometric to $\zeta':=(i|_{\reg(Y)})^*g'$  where $i:\reg(Y)\hookrightarrow \mathbb{D}^N$ is the canonical inclusion. Consider now the K\"ahler manifold $(\reg(Y), \zeta')$. By the fact that $(\reg(Y), \zeta')$ is a K\"ahler submanifold of $(\mathbb{C}^N,g')$ we know that $(\reg(Y), \zeta')$ is a minimal submanifold of $(\mathbb{C}^N,g')$, that is its mean curvature vector field vanishes identically, see \cite{Gray} Lemma 6.26. Therefore the Sobolev embedding theorem of Michael-Simon, see \cite{MISI}, tells us that we have a continuous inclusion 
\begin{equation}
\label{brunello}
W_0^{1,2}(\reg(Y),\zeta')\hookrightarrow L^{\frac{2v}{v-1}}(\reg(Y),\zeta').
\end{equation}
Since $\zeta$ is quasi-isometric to $\zeta'$ we have a continuous inclusion  $$W_0^{1,2}(\reg(Y),\zeta)\hookrightarrow L^{\frac{2v}{v-1}}(\reg(Y),\zeta)$$ and finally, by the fact that $\phi(V)=Y$ and $(\phi|_{\reg(V)})^*g=h|_{\reg(V)}$ we get our desired continuous inclusion  
\begin{equation}
\label{symp}
W_0^{1,2}(\reg(V),h|_{\reg(V)})\hookrightarrow L^{\frac{2v}{v-1}}(\reg(V),h|_{\reg(V)}).
\end{equation}
This establishes the first step of the proof. Now we start with the second part of the proof. As previously said now the goal is to patch together these local Sobolev embeddings. In order to achieve this purpose we are going to build a suitable partition of unity. For every point $p\in \sing(X)$ let $U_p$ be an open neighborhood of $p$ with compact closure such that there exists a proper holomorphic embedding of $U_p$ into a polydisc $\phi_p: U_p \rightarrow \mathbb{D}^N\subset \mathbb{C}^N$ as above. Let $V_p$ be another open neighborhood of $p$ such that $\overline{V_p}\subset U_p$. As we have seen above we have a continuous inclusion $W^{1,2}_0(\reg(V_p),h|_{\reg(V_p)})\hookrightarrow L^{\frac{2v}{v-1}}(\reg(V_p),h|_{\reg(V_p)})$. In the same way, for every point $q\in \reg(X)$, consider an open neighborhood $V_q$ with compact closure such that $\overline{V_q}\subset \reg(X)$. According to Prop. \ref{locin} we have a continuous inclusion $W_0^{1,2}(V_q,h|_{V_q})\hookrightarrow L^{\frac{2v}{v-1}}(V_q,h|_{V_q})$. Hence, since $X$ is compact, we can find a finite number of points $q_1,...,q_n\in \reg(X)$ and $p_1,...,p_m\in \sing(X)$ such that $$\mathcal{W}:= \{V_{q_1},...,V_{q_n},V_{p_1},...,V_{p_m}\}$$ is a finite open cover of $X$, $\overline{V_{q_i}}\cap \sing(X)=\emptyset$ for each $i=1,...,n$ and the  Sobolev embeddings
\begin{equation}
\label{metz}
W^{1,2}_0(\reg(V_{p_j}),h|_{\reg(V_{p_j})})\hookrightarrow L^{\frac{2v}{v-1}}(\reg(V_{p_j}),h|_{\reg(V_{p_j})})\\
\quad\quad\quad  W_0^{1,2}(V_{q_i},h|_{V_{q_i}})\hookrightarrow L^{\frac{2v}{v-1}}(V_{q_i},h|_{V_{q_i}})
\end{equation}
hold for each $j=1,...,m$ and $i=1,...,n$. By the fact that $X$ is paracompact and Hausdorff we can find a continuous partition of unity $\{\alpha_{q_1},...,\alpha_{q_n},\beta_{p_1},...,\beta_{p_m}\}$ subordinated to the open cover $\mathcal{W}$. For any $q_i$ with $i\in\{1,...,n\}$ let $\alpha'_{q_i}:\reg(X)\rightarrow \mathbb{R}$ be a smooth function such that $\alpha'_{q_i}\geq \alpha_{q_i}$, $\supp(\alpha'_{q_i})\subset V_{q_i}$ and $\alpha'_{q_i}(p)=1$ if $\alpha_{q_i}(p)=1$.  Clearly such a function exists. Likewise for any $p_j$ with $j\in \{1,...,m\}$ consider a continuous functions $\beta'_{p_j}:X\rightarrow \mathbb{R}$ such that   $\beta'_{p_j}\geq \beta_{p_j}$, $\supp(\beta'_{p_j})\subset V_{p_j}$, $\beta'_{p_j}(p)=1$ if $\beta_{p_j}(p)=1$, and $\beta'_{p_j}|_{\reg(X)}$ is smooth. In order to show that also such a function exists we argue as follows. Let $Z_{p_j}:=\phi_{p_j}(U_{p_j})$ and let $Y_{p_j}:=\phi_{p_j}(V_{p_j})$. Let us define $\chi_{p_j}:=\beta_{p_j}\circ \phi_{p_j}^{-1}$. Then $\chi_{p_j}:Z_{p_j}\rightarrow \mathbb{R}$ is a continuous function whose support is contained in $Y_{p_j}=\phi_{p_j}(V_{p_j})$. Let $A_{p_j}\subset \mathbb{D}^N$ be an open subset with compact closure such that $\overline{A_{p_j}}\subset \mathbb{D}^N$ and $A_{p_j}\cap Z_{p_j}=Y_{p_j}$.  Let $\upsilon_{p_j}:\mathbb{D}^{N}\rightarrow \mathbb{R}$ be a continuous function  such that $\supp(\upsilon_{p_j})\subset A_{p_j}$ and  $\upsilon_{p_j}|_{Y_{p_j}}=\chi_{p_j}$. Consider now a smooth function \begin{equation}
\label{tolosa}
\tilde{\upsilon}_{p_j}:\mathbb{D}^{N}\rightarrow \mathbb{R}
\end{equation}
 such that $\supp(\tilde{\upsilon}_{p_j})\subset A_{p_j}$, $\tilde{\upsilon}_{p_j}\geq \upsilon_{p_j}$ and $\tilde{\upsilon}_{p_j}(x)=1$ if $\upsilon_{p_j}(x)=1$. Finally we define $\beta'_{p_j}:=\tilde{\upsilon}_{p_j}\circ \phi_{p_j}$. It is straightforward to check that  $\beta'_{p_j}$ satisfies the required properties. As a next step, for each $i\in \{1,...,n\}$, let us define $$\gamma_{q_i}:= \frac{\alpha'_{q_i}}{\sum_{i=1}^n \alpha'_{q_i}+\sum_{j=1}^m \beta'_{p_j}}.$$ Similarly for each $j\in \{1,...,m\}$ let us define $$\gamma_{p_j}:= \frac{\beta'_{p_j}}{\sum_{i=1}^n \alpha'_{q_i}+\sum_{j=1}^m \beta'_{p_j}}.$$ We claim that  
\begin{equation}
\label{goodpart}
\{\gamma_{q_1},...,\gamma_{q_n}, \gamma_{p_1},...,\gamma_{p_m}\}
\end{equation}
 is a continuous partition of unity subordinated to the open cover $\mathcal{W}$ such that: 
\begin{enumerate}
\item for each $i\in \{1,...,n\}$ $\gamma_{q_i}\in C^{\infty}_c(V_{q_i})\subset C^{\infty}_c(\reg(X))$,
\item for each $j\in \{1,...,m\}$ $\gamma_{p_j}\in  C_c(V_{p_j})\subset C_c(X)$ and $\gamma_{p_j}|_{\reg(X)}\in C^{\infty}(\reg(X))$,
\item there exists  a constant $c>0$ such that  
\begin{equation}
\label{mcvf}
\|d\gamma_{q_i}\|_{L^{\infty}\Omega^1(\reg(X),h)}\leq c\ \text{and}\ \|d(\gamma_{p_j}|_{\reg(X)})\|_{L^{\infty}\Omega^1(\reg(X),h)}\leq c
\end{equation}
for each $i\in \{1,...,n\}$ and for each $j\in \{1,...,m\}$ respectively.
\end{enumerate}
That $\{\gamma_{q_1},...,\gamma_{q_n}, \gamma_{p_1},...,\gamma_{p_m}\}$ is a continuous partition of unity subordinated to the open cover $\mathcal{W}$  follows immediately by the definitions of $\gamma_{q_i}$ and $\gamma_{p_j}$. For the same reason  the first two points in the above list are clear. We are therefore left to deal with \eqref{mcvf}. It is clear that for each $i\in \{1,...,n\}$ there exists a positive constant $c_i$ such that $\|d\gamma_{q_i}\|_{L^{\infty}\Omega^1(\reg(X),h)}\leq c_i$. Consider now any function $\gamma_{p_j}$ with $j\in \{1,...,m\}$ arbitrarily fixed. For simplicity let us label $\xi:=\sum_{i=1}^n \alpha'_{q_i}+\sum_{j=1}^m \beta'_{p_j}$. We have $$d(\gamma_{p_j}|_{\reg(X)})=\frac{\xi d(\beta'_{p_j}|_{\reg(X)})-\beta'_{p_j}d(\xi|_{\reg(X)})}{\xi^2}.$$ Clearly we have $\xi|_{\reg(X)}\in L^{\infty}(\reg(X),h)$, $\xi^{-2}|_{\reg(X)}\in L^{\infty}(\reg(X),h)$ and $\beta'_{p_j}|_{\reg(X)}\in L^{\infty}(\reg(X),h)$. Concerning $d(\beta'_{p_j}|_{\reg(X)})$ we have $d(\beta'_{p_j}|_{\reg(X)})\in L^{\infty}\Omega^1(\reg(X),h)$ if and only if $$d(\tilde{\upsilon}_{p_j}|_{\reg(Y_{p_j})})\in L^{\infty}\Omega^1(\reg(Y_{p_j}), \zeta_{p_j})$$ where we recall that  $\zeta_{p_j}:=(i|_{\reg(Y_{p_j})})^*g$, $i:\reg(Y_{p_{j}})\hookrightarrow \mathbb{D}^N$ is the canonical inclusion and $\tilde{\upsilon}_{p_j}$ is defined in \eqref{tolosa}.
Since $\tilde{\upsilon}_{p_j}\in C^{\infty}_{c}(A_{p_j})$ we have $d\tilde{\upsilon}_{p_j}\in L^{\infty}\Omega^1(A_{p_j},g|_{A_{p_j}})$ and this in turn implies immediately that $d(\tilde{\upsilon}_{p_j}|_{\reg(Y_{p_j})})\in$ $L^{\infty}\Omega^1(\reg(Y_{p_j}), \zeta_{p_j})$, see Prop. \ref{dardo}. Therefore we can conclude that $d(\beta'_{p_j}|_{\reg(X)})\in L^{\infty}\Omega^1(\reg(X),h)$. Concerning $d(\xi|_{\reg(X)})$ we have 
\begin{align}
\nonumber &\|d(\xi|_{\reg(X)})\|_{L^{\infty}\Omega^{1}(\reg(X),h)}=\|d(\sum_{i=1}^n \alpha'_{q_i}+\sum_{j=1}^m \beta'_{p_j}|_{\reg(X)})\|_{L^{\infty}\Omega^1\reg(X,h)}\leq\\
\nonumber &\sum_{i=1}^n\|d\alpha'_{q_i}\|_{L^{\infty}\Omega^1(\reg(X),h)}+\sum_{j=1}^m\|d(\beta'_{p_j}|_{\reg(X)})\|_{L^{\infty}\Omega^1(\reg(X),h)}<\infty
\end{align}
because, as we have previously seen, $d\alpha'_{q_i}\in L^{\infty}\Omega^{1}(\reg(X),h)$ and $d(\beta'_{p_j}|_{\reg(X)})\in L^{\infty}\Omega^{1}(\reg(X),h)$ for each $i\in \{1,...,n\}$ and for each $j\in \{1,...,m\}$ respectively. Summarizing, for each $j\in \{1,...,m\}$, we have
\begin{align}
\nonumber &\|d(\gamma_{p_j}|_{\reg(X)})\|_{L^{\infty}\Omega^1(\reg(X),h)}=\left\|\frac{\xi d(\beta'_{p_j}|_{\reg(X)})-\beta'_{p_j}d(\xi|_{\reg(X)})}{\xi^2}\right\|_{L^{\infty}\Omega^1(\reg(X),h)}\leq\\
\nonumber & \|\xi^{-2}|_{\reg(X)}\|_{L^{\infty}(\reg(X),h)}\|\xi|_{\reg(X)}\|_{L^{\infty}(\reg(X),h)} \|d(\beta'_{p_j}|_{\reg(X)})\|_{L^{\infty}\Omega^1(\reg(X),h)}+\\
\nonumber & \|\xi^{-2}|_{\reg(X)}\|_{L^{\infty}(\reg(X),h)}\|d(\xi|_{\reg(X)})\|_{L^{\infty}\Omega^{1}(\reg(X),h)} \|\beta'_{p_j}|_{\reg(X)}\|_{L^{\infty}(\reg(X),h)}<\infty
\end{align}
and thus \eqref{mcvf} is established.\\ We are finally in position to show the existence of a continuous inclusion $W^{1,2}(\reg(X),h)\hookrightarrow L^{\frac{2v}{v-1}}(\reg(X),h)$. To this aim, using the first point of this theorem, it is enough to prove that there exists a constant $r>0$ such that for every $f\in C^{\infty}_c(\reg(X))$  we have 
\begin{equation}
\label{disu}
\|f\|_{L^{\frac{2v}{v-1}}(\reg(X),h)}\leq r\|f\|_{W^{1,2}(\reg(X,h))}.
\end{equation}
Hence let us consider any $f\in C^{\infty}_c(\reg(X))$. Let us fix an open cover  $\mathcal{W}$ of $X$ and a subordinated continuous partition of unity $\{\gamma_{q_1},...,\gamma_{q_n},\gamma_{p_1},...,\gamma_{p_m}\}$ both as in \eqref{goodpart}. Obviously $f\gamma_{q_i}\in C^{\infty}_c(V_{q_i})$ for each $i\in \{1,...,n\}$ and $f\gamma_{p_j}\in C^{\infty}_c(\reg(V_{p_j}))$ for each $j\in \{1,...,m\}$.
According to \eqref{metz} there are positive constants $r_i>0$, $s_j>0$, $i\in \{1,...,n\}$, $j\in \{1,...,m\}$ such that: 
\begin{align}
\label{camelopardalis}
\nonumber & \|f\|_{L^{\frac{2v}{v-1}}(\reg(X),h)}=\|\sum_{i=1}^n\gamma_{q_i}f+\sum_{j=1}^m\gamma_{p_j}f\|_{L^{\frac{2v}{v-1}}(\reg(X),h)}\leq \|\sum_{i=1}^n\gamma_{q_i}f\|_{L^{\frac{2v}{v-1}}(\reg(X),h)}+\|\sum_{j=1}^m\gamma_{p_j}f\|_{L^{\frac{2v}{v-1}}(\reg(X),h)}=\\
\nonumber & \|\sum_{i=1}^n\gamma_{q_i}f\|_{L^{\frac{2v}{v-1}}(V_{q_i},h|_{V_{q_i}})}+\|\sum_{j=1}^m\gamma_{p_j}f\|_{L^{\frac{2v}{v-1}}(\reg(V_{p_j}),h|_{\reg(V_{p_j})})}\leq \sum_{i=1}^nr_i\|\gamma_{q_i}f\|_{W_0^{1,2}(V_{q_i},h|_{V_{q_i}})}+\\
\nonumber &  \sum_{j=1}^ms_j\|\gamma_{p_j}f\|_{W_0^{1,2}(\reg(V_{p_j}),h|_{\reg(V_{p_j})})}= \sum_{i=1}^nr_i\|\gamma_{q_i}f\|_{W_0^{1,2}(\reg(X),h)}+ \sum_{j=1}^ms_j\|\gamma_{p_j}f\|_{W_0^{1,2}(\reg(X),h)}\leq \\
\nonumber & \sum_{i=1}^nr_i\|\gamma_{q_i}f\|_{L^2(\reg(X),h)}+\sum_{i=1}^nr_i\|d(\gamma_{q_i}f)\|_{L^2\Omega^1(\reg(X),h)} +\sum_{j=1}^ms_j\|\gamma_{p_j}f\|_{L^2(\reg(X),h)}+\sum_{j=1}^ms_j\|d(\gamma_{p_j}f)\|_{L^2\Omega^1(\reg(X),h)}\leq\\
\nonumber & \sum_{i=1}^nr_i\|\gamma_{q_i}f\|_{L^2(\reg(X),h)}+\sum_{i=1}^nr_i\|\gamma_{q_i}df\|_{L^2\Omega^1(\reg(X),h)} +\sum_{j=1}^ms_j\|\gamma_{p_j}f\|_{L^2(\reg(X),h)}+\sum_{j=1}^ms_j\|\gamma_{p_j}df\|_{L^2\Omega^1(\reg(X),h)}+\\
\nonumber & \sum_{i=1}^nr_i\|fd\gamma_{q_i}\|_{L^2\Omega^1(\reg(X),h)}+\sum_{j=1}^ms_j\|fd(\gamma_{p_j}|_{\reg(X)})\|_{L^2\Omega^1(\reg(X),h)}\leq \\
 \nonumber & \sum_{i=1}^nr_i\|f\|_{L^2(\reg(X),h)}+\sum_{i=1}^nr_i\|df\|_{L^2\Omega^1(\reg(X),h)} +\sum_{j=1}^ms_j\|f\|_{L^2(\reg(X),h)}+\sum_{j=1}^ms_j\|df\|_{L^2\Omega^1(\reg(X),h)}+\\
\nonumber & \|f\|_{L^2(\reg(X),h)}\sum_{i=1}^nr_i\|d\gamma_{q_i}\|_{L^{\infty}\Omega^1(\reg(X),h)}+\|f\|_{L^2(\reg(X),h)}\sum_{j=1}^ms_j\|d(\gamma_{p_j}|_{\reg(X)})\|_{L^{\infty}\Omega^1(\reg(X),h)}\leq r\|f\|_{W^{1,2}(\reg(X),h)}
\end{align}
for some $r>0$ sufficiently big. The proof of the second point is thus complete. Finally the \emph{third property} of this  statement follows by the second one and Prop. 4.3 in \cite{FraBei}.
\end{proof}

The next result is concerned with the properties of the Friedrichs extension of the Laplace-Beltrami operator.

\begin{teo}
\label{Beltrami}
Let $(X,h)$ be a compact and irreducible Hermitian complex space of complex dimension $v>1$.  Consider the Laplace-Beltrami operator $\Delta:C^{\infty}_c(\reg(X))\rightarrow C^{\infty}_c(\reg(X))$ and let 
\begin{equation}
\label{fist}
\Delta^{\mathcal{F}}:L^2(\reg(X),h)\rightarrow L^2(\reg(X),h)
\end{equation}
be its Friedrichs extension. Then \eqref{fist} has discrete spectrum and 
\begin{equation}
\label{core}
\{f\in C^{\infty}(\reg(X)): f\in L^2(\reg(X),h), df\in L^2\Omega^1(\reg(X),h), d^t(df)\in L^2(\reg(X),h)\}
\end{equation}
 is a core domain for \eqref{fist}.\\Let $$0=\lambda_1< \lambda_2\leq...\leq \lambda_k\leq...$$ be the eigenvalues of \eqref{fist}. Then 
\begin{equation}
\label{asy}
\lim\inf \lambda_k k^{-\frac{1}{v}}>0
\end{equation}
as $k\rightarrow \infty$. Finally let 
\begin{equation}
\label{love}
e^{-t\Delta^{\mathcal{F}}}:L^2(\reg(X),h)\rightarrow L^2(\reg(X),h)
\end{equation}
be the heat operator associated to \eqref{fist}. Then \eqref{love} is a trace class operator and its trace satisfies the following estimate
\begin{equation}
\label{cisiamo}
\Tr(e^{-t\Delta^{\mathcal{F}}})\leq C\vol_h(\reg(X))t^{-v}
\end{equation}
 for $0<t\leq 1$ and for some constant $C>0$.
\end{teo}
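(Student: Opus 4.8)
The plan is to read off every assertion from Theorem \ref{dollar} together with the standard machinery relating Sobolev inequalities to heat-kernel bounds. First I would observe that, by definition, the Friedrichs extension $\Delta^{\mathcal F}$ is the nonnegative self-adjoint operator associated with the closure of the quadratic form $f\mapsto \|df\|^2_{L^2\Omega^1(\reg(X),h)}$ with core $C^\infty_c(\reg(X))$; its form domain is therefore $W^{1,2}_0(\reg(X),h)$, which by point (1) of Theorem \ref{dollar} coincides with $W^{1,2}(\reg(X),h)$. Point (3) of Theorem \ref{dollar} says that the inclusion of this form domain into $L^2(\reg(X),h)$ is compact, and it is a standard fact that a nonnegative self-adjoint operator whose form domain embeds compactly into the ambient Hilbert space has compact resolvent, hence discrete spectrum. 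This settles the first assertion. Moreover $\ker(\Delta^{\mathcal F})$ consists of the $f\in W^{1,2}$ with $df=0$; since $\reg(X)$ is connected (irreducibility) these are the constants, which lie in $L^2$ because $\vol_h(\reg(X))<\infty$, so the lowest eigenvalue is simple and $0=\lambda_1<\lambda_2$.

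For the core statement I would show that $\mathcal D(\Delta^{\mathcal F})$ coincides with the set \eqref{core}. Indeed, if $f$ belongs to \eqref{core} then $df\in L^2$ forces $f\in W^{1,2}=W^{1,2}_0$, and integrating by parts against $\phi\in C^\infty_c(\reg(X))$ gives $\langle df,d\phi\rangle=\langle d^t df,\phi\rangle$ with $d^t df\in L^2$, so $f\in\mathcal D(\Delta^{\mathcal F})$ and $\Delta^{\mathcal F}f=d^t df$. Conversely any $f\in\mathcal D(\Delta^{\mathcal F})$ satisfies $\Delta f\in L^2$ in the distributional sense, and since $\Delta$ is elliptic with smooth coefficients on the manifold $\reg(X)$, interior elliptic regularity makes $f$ smooth on $\reg(X)$, so $f$ lies in \eqref{core}. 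Thus the two sets are equal and \eqref{core} is trivially a core.

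The quantitative part rests on point (2) of Theorem \ref{dollar}. The real dimension of $\reg(X)$ is $m=2v$, so the Sobolev exponent $\tfrac{2v}{v-1}$ is exactly $\tfrac{2m}{m-2}$; squaring the continuous inclusion $W^{1,2}\hookrightarrow L^{2v/(v-1)}$ yields a Sobolev inequality $\|f\|^2_{L^{2v/(v-1)}}\le C\bigl(\|f\|^2_{L^2}+\|df\|^2_{L^2}\bigr)$ valid for all $f$ in the form domain. By the standard theory relating such an $L^2$-Sobolev inequality for a Dirichlet form to ultracontractivity of the associated sub-Markovian semigroup (Nash's method; see Varopoulos and Davies), this inequality is equivalent to the bound $\|e^{-t\Delta^{\mathcal F}}\|_{L^1\to L^\infty}\le Ct^{-v}$ for $0<t\le 1$. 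Consequently $e^{-t\Delta^{\mathcal F}}$ has a bounded continuous integral kernel $p_t$ with on-diagonal estimate $p_t(x,x)\le Ct^{-v}$, and integrating over the finite-volume manifold $\reg(X)$ shows that $e^{-t\Delta^{\mathcal F}}$ is trace class with $\Tr(e^{-t\Delta^{\mathcal F}})=\int_{\reg(X)}p_t(x,x)\,\dvol_h\le C\vol_h(\reg(X))t^{-v}$, which is \eqref{cisiamo}.

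Finally the eigenvalue growth \eqref{asy} follows from \eqref{cisiamo} by a soft Tauberian argument: writing $N(\lambda)=\#\{k:\lambda_k\le\lambda\}$ and using $\sum_k e^{-t\lambda_k}=\Tr(e^{-t\Delta^{\mathcal F}})$, for $\lambda\ge 1$ the choice $t=1/\lambda$ gives $e^{-1}N(\lambda)\le\sum_k e^{-\lambda_k/\lambda}\le C\lambda^v$, so $N(\lambda)\le C'\lambda^v$. Since $k\le N(\lambda_k)$, this yields $\lambda_k\ge (k/C')^{1/v}$ and hence $\liminf_k \lambda_k k^{-1/v}\ge (C')^{-1/v}>0$. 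I expect the only genuinely nontrivial step to be the passage from the Sobolev inequality to ultracontractivity, where the heat-kernel machinery enters; everything else is a routine combination of Theorem \ref{dollar}, elliptic regularity, and the Tauberian estimate above.
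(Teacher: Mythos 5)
Most of your argument is sound and runs along the same lines as the paper: the compact embedding of the form domain $W^{1,2}_0=W^{1,2}$ into $L^2$ gives discrete spectrum, the Sobolev inequality of Theorem \ref{dollar}(2) gives ultracontractivity and hence the on-diagonal heat kernel bound, the trace estimate \eqref{cisiamo} and trace-class property (the paper outsources exactly this step to Prop.\ 4.4 of \cite{FraBei}), and your Tauberian argument for \eqref{asy} is the standard one. The identification of $\ker(\Delta^{\mathcal F})$ with the constants is also fine.

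The gap is in the core statement. You claim that $\mathcal D(\Delta^{\mathcal F})$ \emph{equals} the set \eqref{core} because ``interior elliptic regularity makes $f$ smooth''. This is false: for $f\in\mathcal D(\Delta^{\mathcal F})$ one only knows $\Delta f\in L^2$ distributionally, and elliptic regularity with an $L^2$ right-hand side yields $f\in H^2_{\mathrm{loc}}(\reg(X))$, not $f\in C^\infty$; there is no bootstrap available, since $\Delta f$ is an arbitrary $L^2$ function rather than $\lambda f$. Already on a smooth closed manifold $\mathcal D(\Delta)=H^2$ strictly contains the smooth functions, so the two sets cannot coincide. Your forward inclusion --- that \eqref{core} sits inside $\mathcal D(\Delta^{\mathcal F})$ with $\Delta^{\mathcal F}f=d^t(df)$ --- is correct, but to conclude that \eqref{core} is a \emph{core} you must prove it is dense in $\mathcal D(\Delta^{\mathcal F})$ for the graph norm, and your argument provides nothing for this once the equality of the two sets fails. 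The paper supplies precisely this missing input: it quotes the density of $C^{\infty}(\reg(X))\cap\mathcal D(\Delta^{\mathcal F})$ in the graph norm (Prop.\ 2.1 of \cite{FraBei}, a Friedrichs-mollification statement) and then identifies $C^{\infty}(\reg(X))\cap\mathcal D(\Delta^{\mathcal F})$ with \eqref{core} using $\Delta^{\mathcal F}=d^t_{\max}\circ d_{\min}$ from \cite{BLE} together with $d_{\min}=d_{\max}$ from Theorem \ref{dollar}(1). Some such density argument has to be added to your proof; the rest can stand as written.
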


\begin{proof}
According to Th. \ref{dollar} we have a continuous inclusion  $W^{1,2}(\reg(X),h)\hookrightarrow L^{\frac{2v}{v-1}}(\reg(X),h)$. Therefore, by Prop. 4.4 in \cite{FraBei}, we can conclude that \eqref{love} is a trace class operator and its trace satisfies the following estimate $$\Tr(e^{-t\Delta^{\mathcal{F}}})\leq C\vol_h(\reg(X))t^{-v}$$ for $0<t\leq 1$ and for some constant $C>0$. Since \eqref{love} is a trace class operator we get immediately that \eqref{fist} has discrete spectrum. Moreover, by the fact that  $\Tr(e^{-t\Delta^{\mathcal{F}}})=\sum_{k\in \mathbb{N}}e^{-t\lambda_k} \leq C\vol_h(\reg(X))t^{-v}$  for $t\in (0,1]$, we get, by applying a classical argument from Tauberian theory as in \cite{Tay} page 107, that $\lim\inf \lambda_k k^{-\frac{1}{v}}>0$ as $k\rightarrow \infty$.  Concerning \eqref{core} we know that $C^{\infty}(\reg(X))\cap \mathcal{D}(\Delta^{\mathcal{F}})$ is dense in $\mathcal{D}(\Delta^{\mathcal{F}})$, see for instance Prop. 2.1 in \cite{FraBei}. Moreover  $\Delta^{\mathcal{F}}=d^{t}_{\max}\circ d_{\min}$, see \cite{BLE} Lemma 3.1, and by the previous theorem we have $d_{\max}=d_{\min}$ which in turn implies that $d^{t}_{\max}=d^t_{\min}$. Thereby we have shown that $C^{\infty}(\reg(X))\cap \mathcal{D}(\Delta^{\mathcal{F}})=\{f\in C^{\infty}(\reg(X)): f\in L^2(\reg(X),h), df\in L^2\Omega^1(\reg(X),h), d^t(df)\in L^2(\reg(X),h)\}$ and thus we can conclude that \eqref{core} is a core domain for \eqref{fist}. Finally we observe that the third point of Th. \ref{dollar} implies directly that $\Delta^{\mathcal{F}}:L^2(\reg(X),h)\rightarrow L^2(\reg(X),h)$ has discrete spectrum. Indeed we have a continuous inclusion $\mathcal{D}(\Delta^{\mathcal{F}})\hookrightarrow W^{1,2}(\reg(X),h)$ and a compact inclusion $W^{1,2}(\reg(X),h)\hookrightarrow L^2(\reg(X),h)$ where $\mathcal{D}(\Delta^{\mathcal{F}})$ is endowed with the corresponding graph norm. Hence by composing we get a compact inclusion $\mathcal{D}(\Delta^{\mathcal{F}})\hookrightarrow L^2(\reg(X),h)$ and this in turn amounts to saying that  \eqref{fist} has discrete spectrum.
\end{proof}
The next results are concerned with the Friedrichs extension of the Hodge-Kodaira Laplacian acting on functions.

\begin{prop}
\label{estimate}
Let $(X,h)$ be a compact and irreducible Hermitian complex space. There exist positive constants $a$ and $b$ such that for each $f\in C^{\infty}_c(\reg(X))$ we have 
\begin{equation}
\label{aprile}
\|df\|^2_{L^2\Omega^1(\reg(X),h)}\leq a\|f\|^2_{L^2(\reg(X),h)}+b\|\overline{\partial}f\|^2_{L^2\Omega^{0,1}(\reg(X),h)}.
\end{equation}
\end{prop}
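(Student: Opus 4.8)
The plan is to reduce \eqref{aprile} to a bound on the Hermitian torsion of $h$. For a function the differential splits orthogonally as $df=\partial f+\overline{\partial}f$ with $\partial f\in\Omega^{1,0}$ and $\overline{\partial}f\in\Omega^{0,1}$, so pointwise $|df|^2=|\partial f|^2+|\overline{\partial}f|^2$ and \eqref{aprile} is equivalent to controlling $\|\partial f\|^2$; in fact it suffices to control the difference $\|\partial f\|^2-\|\overline{\partial}f\|^2$, which vanishes in the K\"ahler case. Let $\omega$ denote the fundamental $(1,1)$-form of $h$ on $\reg(X)$ and set $\eta:=\omega^{v-1}/(v-1)!$, so that $\dvol_h=\omega^{v}/v!$. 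Using the normalization $i\,\alpha\wedge\overline{\alpha}\wedge\eta=|\alpha|^2\dvol_h$ for $(1,0)$-forms $\alpha$ and extracting the $(1,1)$-component of $df\wedge d\overline{f}$ (wedging with $\eta$ annihilates the $(2,0)$ and $(0,2)$ parts), one verifies the pointwise identity
\begin{equation*}
(|\partial f|^2-|\overline{\partial}f|^2)\,\dvol_h=i\,df\wedge d\overline{f}\wedge\eta .
\end{equation*}

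Next I would integrate by parts. Since $f\in C^{\infty}_c(\reg(X))$, the $1$-form $f\,d\overline{f}$ is compactly supported, $df\wedge d\overline{f}=d(f\,d\overline{f})$, and $dd=0$; hence Stokes' theorem on the open manifold $\reg(X)$ gives
\begin{equation*}
\|\partial f\|^2_{L^2\Omega^{1,0}(\reg(X),h)}-\|\overline{\partial}f\|^2_{L^2\Omega^{0,1}(\reg(X),h)}=i\int_{\reg(X)}f\,d\overline{f}\wedge d\eta ,
\end{equation*}
where $d\eta=\tfrac{\omega^{v-2}}{(v-2)!}\wedge d\omega$ for $v\ge 2$ and $d\eta=0$ for $v=1$ (so the difference vanishes identically on curves, recovering the K\"ahler situation). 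The crucial gain is that the right-hand side is \emph{first order} in $f$: it is linear in $f$ and in $d\overline{f}$, and the only metric input is the torsion $3$-form $d\omega$.

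The main obstacle, and the one place where the singularities really intervene, is to show that the torsion is globally bounded, i.e.
\begin{equation*}
K:=\|d\omega\|_{L^{\infty}\Omega^3(\reg(X),h)}<\infty .
\end{equation*}
Here I would use the Hermitian structure exactly as in the proof of Th. \ref{dollar}. Around any $p\in X$ there is a neighbourhood $U$ and a proper holomorphic embedding $\phi\colon U\hookrightarrow\mathbb{D}^N$ with $(\phi|_{\reg(U)})^*g=h|_{\reg(U)}$ for some Hermitian metric $g$ on the polydisc. Writing $\iota\colon\reg(Y)\hookrightarrow\mathbb{D}^N$ for the inclusion of $\reg(Y)=\reg(\phi(U))$ and $\omega_g$ for the smooth fundamental form of $g$, we have $\omega=\iota^*\omega_g$ and $d\omega=\iota^*(d\omega_g)$. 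On a relatively compact subset the smooth form $d\omega_g$ lies in $L^{\infty}\Omega^3$, and Prop. \ref{dardo} yields the pointwise bound $|d\omega|_h=|\iota^*(d\omega_g)|_h\le|d\omega_g|_g$, hence a local $L^{\infty}$ estimate. Covering the compact space $X$ by finitely many such charts, together with interior charts of $\reg(X)$ where the bound is trivial, gives the global bound $K<\infty$.

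Granting $K<\infty$, the fact that $|\omega|_h$ is a constant depending only on $v$ and the mass inequality for wedge products give the pointwise estimate $|f\,d\overline{f}\wedge d\eta|\le C_v\,K\,|f|\,|df|\,\dvol_h$, whence by Cauchy--Schwarz
\begin{equation*}
\bigl|\,\|\partial f\|^2-\|\overline{\partial}f\|^2\,\bigr|\le C_v\,K\,\|f\|_{L^2(\reg(X),h)}\,\|df\|_{L^2\Omega^1(\reg(X),h)} .
\end{equation*}
Finally I would write $\|df\|^2=2\|\overline{\partial}f\|^2+(\|\partial f\|^2-\|\overline{\partial}f\|^2)$, insert the last display, and apply Young's inequality $C_vK\|f\|\,\|df\|\le\tfrac12\|df\|^2+\tfrac12(C_vK)^2\|f\|^2$; absorbing the term $\tfrac12\|df\|^2$ into the left-hand side and rearranging produces \eqref{aprile} with explicit admissible constants, for instance $b=4$ and $a=(C_vK)^2$. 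Everything but the global boundedness of $d\omega$ is integration by parts and elementary inequalities, and that one delicate point is precisely what the local Hermitian embeddings and Prop. \ref{dardo} are designed to supply.
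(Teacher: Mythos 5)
Your argument is correct, but it follows a genuinely different route from the paper's. The paper localizes: it reuses the open cover and partition of unity $\{\gamma_1,\dots,\gamma_s\}$ constructed in the proof of Th.~\ref{dollar}, observes that on each chart $h$ is quasi-isometric to a K\"ahler metric $h_i$ (the restriction of the flat metric of $\mathbb{C}^N$ to the embedded piece), applies the K\"ahler identity $\Delta^{h_i}=2\Delta^{h_i}_{\overline{\partial}}$ to each $f\gamma_i$, and then transfers back to $h$ via the quasi-isometry and the $L^{\infty}$ bounds \eqref{mcvf} on $d\gamma_i$; the constants $a,b$ there depend on the number of charts and the quasi-isometry constants. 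You instead work globally: the pointwise identity $(|\partial f|^2-|\overline{\partial}f|^2)\dvol_h=i\,df\wedge d\overline{f}\wedge\omega^{v-1}/(v-1)!$ and Stokes' theorem (legitimate since $f\in C^{\infty}_c(\reg(X))$) convert the defect from the K\"ahler case into a first-order term controlled by the torsion $d\omega$, and the only place the singular structure enters is the global bound $\|d\omega\|_{L^{\infty}}<\infty$, which you correctly extract from the local embeddings via $d\omega=\iota^*(d\omega_g)$, Prop.~\ref{dardo} and compactness. Your computation of the $(1,1)$-component of $df\wedge d\overline{f}$ and the sign of the $\overline{\partial}f\wedge\overline{\overline{\partial}f}$ term check out, and the absorption via Young's inequality is valid because $\|df\|^2<\infty$ for compactly supported $f$. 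What your approach buys is cleaner, essentially explicit constants (e.g.\ $b=4$ independent of any cover, $a$ controlled by the sup of the torsion) and no need for the delicate smooth partition of unity; what the paper's approach buys is that it recycles machinery already built for Th.~\ref{dollar} and avoids the multilinear-algebra identity and the Stokes argument. One cosmetic point: the proposition asks for \emph{positive} constants, so in the K\"ahler case where your $a=(C_vK)^2$ degenerates to $0$ you should simply replace it by any positive number.
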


\begin{proof}
We start with some remarks about the notation that will be used through the proof. As we are going to use several metrics,  given a Hermitian manifold $(M,h)$, in order to avoid any confusion, we will label with $\Delta^h$ the corresponding Laplace-Beltrami operator. Analogously $\Delta^h_{\overline{\partial}}$ will stand for the associated Hodge-Kodaira Laplacian (acting on functions). We are now ready for the proof. As in the proof of Th. \ref{dollar} we can find a finite open cover of $X$, $\{V_1,...,V_s\}$, with a subordinated partition of unity, $\{\gamma_1,...,\gamma_s\}$, such that:
\begin{itemize}
\item the properties listed in \eqref{goodpart}-\eqref{mcvf} are fulfilled,
\item for each $i=1,...,s$ there exists a K\"ahler metric $h_i$ on $\reg(V_i)$ such that $h|_{\reg(V_i)}$ and $h_i$ are quasi-isometric.
\end{itemize}
Let $f\in C^{\infty}_c(\reg(X))$. Using the fact that $f\gamma_i\in C^{\infty}_c(\reg(V_i))$ for each $i=1,...,s$ and that for some $c>0$, $d>0$ and for each $i=1,...,s$ the following inequalities hold $\|\overline{\partial}(\gamma_i|_{\reg(X)})\|_{L^{\infty}\Omega^1(\reg(X),h)}\leq d$ and $c^{-1}\|\ \|^2_{L^2\Omega^1(\reg(V_i),h_i)}\leq\|\ \|^2_{L^2\Omega^{0,1}(\reg(V_i),h|_{\reg(V_i)})}\leq c\|\ \|^2_{L^2\Omega^1(\reg(V_i),h_i)}$, we have:
\begin{align}
\nonumber & \|df\|^2_{L^2\Omega^1(\reg(X),h)}=\|\sum_{i=1}^sd(f\gamma_i)\|^2_{L^2\Omega^1(\reg(X),h)}\leq s\sum_{i=1}^s\|d(f\gamma_i)\|^2_{L^2\Omega^1(\reg(X),h)}=s\sum_{i=1}^s\|d(f\gamma_i)\|^2_{L^2\Omega^1(\reg(V_i),h|_{V_i})}\leq\\
\nonumber & sc\sum_{i=1}^s\|d(f\gamma_i)\|^2_{L^2\Omega^1(\reg(V_i),h_i)}=sc\sum_{i=1}^s\langle \Delta^{h_i}(f\gamma_i),f\gamma_i\rangle_{L^2(\reg(V_i),h_i)}=2sc\sum_{i=1}^s\langle \Delta^{h_i}_{\overline{\partial}}(f\gamma_i),f\gamma_i\rangle_{L^2(\reg(V_i),h_i)}=\\
\nonumber & 2sc\sum_{i=1}^s\|\overline{\partial}(f\gamma_i)\|^2_{L^2\Omega^{0,1}(\reg(V_i),h_i)}\leq 2sc^2\sum_{i=1}^s\|\overline{\partial}(f\gamma_i)\|^2_{L^2\Omega^{0,1}(\reg(V_i),h|_{\reg(V_i)})}=2sc^2\sum_{i=1}^s\|\overline{\partial}(f\gamma_i)\|^2_{L^2\Omega^{0,1}(\reg(X),h)}=\\
\nonumber & 2sc^2\sum_{i=1}^s\|\gamma_i\overline{\partial}f+f\overline{\partial}(\gamma_i|_{\reg(V_i)})\|^2_{L^2\Omega^{0,1}(\reg(X),h)}\leq 2sc^2\sum_{i=1}^s2(\|\gamma_i\overline{\partial}f\|^2_{L^2\Omega^{0,1}(\reg(X),h)}+\|f\overline{\partial}(\gamma_i|_{\reg(V_i)})\|^2_{L^2\Omega^{0,1}(\reg(X),h)})=\\
\nonumber & 4sc^2\sum_{i=1}^s\|\gamma_i\overline{\partial}f\|^2_{L^2\Omega^{0,1}(\reg(X),h)}+4sc^2\sum_{i=1}^s\|f\overline{\partial}(\gamma_i|_{\reg(V_i)})\|^2_{L^2\Omega^{0,1}(\reg(X),h)}\leq 4s^2c^2\|\overline{\partial}f\|^2_{L^2\Omega^{0,1}(\reg(X),h)}+\\
\nonumber & 4sc^2\sum_{i=1}^s\|f\|^2_{L^2(\reg(X),h)}\|\overline{\partial}(\gamma_i|_{\reg(V_i)})\|^2_{L^{\infty}\Omega^{0,1}(\reg(X),h)}\leq 4s^2c^2\|\overline{\partial}f\|^2_{L^2\Omega^{0,1}(\reg(X),h)}+4s^2c^2d^2\|f\|^2_{L^2(\reg(X),h)}.
\end{align}
In conclusion we showed the existence of two positive constants $a=(2sc)^2$ and $b=(2scd)^2$ such that for each $f\in C^{\infty}_c(\reg(X))$ we have $$\|df\|^2_{L^2\Omega^1(\reg(X),h)}\leq a\|f\|^2_{L^2(\reg(X),h)}+b\|\overline{\partial}f\|^2_{L^2\Omega^{0,1}(\reg(X),h)}.$$ The proof is thus concluded.
\end{proof}

We have now the following theorems.

\begin{teo}
\label{hklaplacian}
Let $(X,h)$ be a compact and irreducible Hermitian complex space of complex dimension $v>1$. Consider the Hodge-Kodaira Laplacian acting on functions  $\Delta_{\overline{\partial}}:C^{\infty}_c(\reg(X))\rightarrow C^{\infty}_c(\reg(X))$ and let 
\begin{equation}
\label{wise}
\Delta^{\mathcal{F}}_{\overline{\partial}}:L^2(\reg(X),h)\rightarrow L^2(\reg(X),h)
\end{equation}
be its Friedrichs extension. We have the following properties:
\begin{enumerate}
\item Let $f\in L^{2}(\reg(X),h)$. Then $f\in \mathcal{D}(\overline{\partial}_{\min})$ if and only if $f\in W^{1,2}(\reg(X),h)$. Moreover the graph norm induced by $\overline{\partial}_{\min}$ and the norm of $W^{1,2}(\reg(X),h)$ are equivalent. Therefore, at the level of topological vector spaces, we have the following equality: $\mathcal{D}(\overline{\partial}_{\min})=W^{1,2}(\reg(X),h)$.
\item We have a continuous inclusion $\mathcal{D}(\overline{\partial}_{\min})\hookrightarrow L^{\frac{2v}{v-1}}(\reg(X),h)$ where $\mathcal{D}(\overline{\partial}_{\min})$ is endowed with the corresponding graph norm.
\item The natural inclusion $\mathcal{D}(\overline{\partial}_{\min})\hookrightarrow L^2(\reg(X),h)$ is a compact operator where $\mathcal{D}(\overline{\partial}_{\min})$ is endowed with the corresponding graph norm.
\item The operator $\Delta_{\overline{\partial}}^{\mathcal{F}}$ in \eqref{wise} has discrete spectrum.
\end{enumerate}
\end{teo}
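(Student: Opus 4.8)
The plan is to reduce everything to Theorem \ref{dollar} by showing that the graph norm of $\overline{\partial}_{\min}$ is equivalent to the $W^{1,2}$-norm on $C^{\infty}_c(\reg(X))$. The starting observation is that on functions the de Rham differential splits as $d=\partial+\overline{\partial}$ into its $(1,0)$- and $(0,1)$-parts, and since $\Omega^{1,0}(\reg(X))$ and $\Omega^{0,1}(\reg(X))$ sit orthogonally inside the complexified bundle of $1$-forms, for every $f\in C^{\infty}_c(\reg(X))$ one has pointwise, hence in $L^2$, a Pythagorean relation expressing $\|df\|^2_{L^2\Omega^1(\reg(X),h)}$ in terms of $\|\partial f\|^2$ and $\|\overline{\partial}f\|^2$, up to a fixed constant coming from the normalisation of the complexified metric. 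In particular $\overline{\partial}f$ is a component of $df$ under an orthogonal projection, so $\|\overline{\partial}f\|_{L^2\Omega^{0,1}(\reg(X),h)}\leq c'\|df\|_{L^2\Omega^1(\reg(X),h)}$ for some fixed $c'>0$; this is the easy half of the equivalence. The reverse inequality is exactly Prop. \ref{estimate}, namely $\|df\|^2\leq a\|f\|^2+b\|\overline{\partial}f\|^2$. Combining the two, the graph norms $\|f\|^2_{L^2(\reg(X),h)}+\|\overline{\partial}f\|^2_{L^2\Omega^{0,1}(\reg(X),h)}$ and $\|f\|^2_{L^2(\reg(X),h)}+\|df\|^2_{L^2\Omega^1(\reg(X),h)}$ are equivalent on $C^{\infty}_c(\reg(X))$.

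For the \emph{first property} I would then argue that $\mathcal{D}(\overline{\partial}_{\min})$ is by definition the completion of $C^{\infty}_c(\reg(X))$ in the first graph norm, while $W^{1,2}_0(\reg(X),h)=\mathcal{D}(d_{\min})$ is its completion in the second. Since these norms are equivalent, the two completions coincide as topological vector spaces with equivalent norms, and Theorem \ref{dollar}(1) identifies $W^{1,2}_0(\reg(X),h)=W^{1,2}(\reg(X),h)$. This yields $\mathcal{D}(\overline{\partial}_{\min})=W^{1,2}(\reg(X),h)$ with equivalent norms, establishing the first property. The \emph{second} and \emph{third properties} follow at once by composing this identification with the continuous inclusion $W^{1,2}(\reg(X),h)\hookrightarrow L^{\frac{2v}{v-1}}(\reg(X),h)$ of Theorem \ref{dollar}(2) and the compact inclusion $W^{1,2}(\reg(X),h)\hookrightarrow L^2(\reg(X),h)$ of Theorem \ref{dollar}(3) respectively.

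For the \emph{fourth property} I would recall that $\Delta^{\mathcal{F}}_{\overline{\partial}}$ is the self-adjoint operator attached to the closure of the quadratic form $f\mapsto\|\overline{\partial}f\|^2_{L^2\Omega^{0,1}(\reg(X),h)}$, $f\in C^{\infty}_c(\reg(X))$, so that its form domain is precisely $\mathcal{D}(\overline{\partial}_{\min})$ with the graph norm; concretely $\Delta^{\mathcal{F}}_{\overline{\partial}}=\overline{\partial}^t_{\max}\circ\overline{\partial}_{\min}$ via the adjoint relations $\overline{\partial}^*_{\min}=\overline{\partial}^t_{\max}$. Hence there is a continuous inclusion $\mathcal{D}(\Delta^{\mathcal{F}}_{\overline{\partial}})\hookrightarrow\mathcal{D}(\overline{\partial}_{\min})$ for the respective graph norms, and composing it with the compact inclusion $\mathcal{D}(\overline{\partial}_{\min})\hookrightarrow L^2(\reg(X),h)$ from the third property gives a compact inclusion $\mathcal{D}(\Delta^{\mathcal{F}}_{\overline{\partial}})\hookrightarrow L^2(\reg(X),h)$. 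This is equivalent to $\Delta^{\mathcal{F}}_{\overline{\partial}}$ having compact resolvent, hence discrete spectrum.

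I do not expect a serious obstacle, since Prop. \ref{estimate} and Theorem \ref{dollar} carry the analytic weight. The only points demanding care are the two-sidedness of the norm equivalence, which relies on bookkeeping the orthogonal splitting of $df$ correctly, and the precise identification of the form domain of the Friedrichs extension with $\mathcal{D}(\overline{\partial}_{\min})$ so that the compactness criterion applies cleanly.
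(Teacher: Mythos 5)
Your proposal is correct and takes essentially the same route as the paper: the identity $\|df\|^2_{L^2\Omega^1}=\|\partial f\|^2_{L^2\Omega^{1,0}}+\|\overline{\partial}f\|^2_{L^2\Omega^{0,1}}$ on $C^{\infty}_c(\reg(X))$ combined with Prop.~\ref{estimate} and Th.~\ref{dollar} gives properties 1--3, and the fourth follows from $\Delta^{\mathcal{F}}_{\overline{\partial}}=\overline{\partial}^t_{\max}\circ\overline{\partial}_{\min}$ together with the continuous inclusion $\mathcal{D}(\Delta^{\mathcal{F}}_{\overline{\partial}})\hookrightarrow\mathcal{D}(\overline{\partial}_{\min})$ composed with the compact inclusion into $L^2$, exactly as in the paper.
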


\begin{proof}
Clearly given any $f\in C^{\infty}_c(\reg(X))$ we have $\|df\|^2_{L^2\Omega^1(\reg(X,h))}=\|\partial f\|^2_{L^2\Omega^{1,0}(\reg(X,h))}+\|\overline{\partial}f\|^2_{L^2\Omega^{0,1}(\reg(X,h))}$. Joining this observation with Prop.\ref{estimate} we have the first property of the above list. The second and the third properties follow by the first one and Th. \ref{dollar}.  Finally for the fourth property we argue as follows: according to \cite{BLE} Lemma 3.1 we have $\Delta^{\mathcal{F}}_{\overline{\partial}}=\overline{\partial}^t_{\max}\circ \overline{\partial}_{\min}$. Moreover it is easy to check that we have the inequality $$\|\overline{\partial}_{\min}f\|^2_{L^2\Omega^{0,1}(\reg(X),h)}\leq\frac{1}{2}\left(\|f\|_{L^2(\reg(X),h)}^2+\|\overline{\partial}^t_{\max}(\overline{\partial}_{\min}f)\|^2_{L^2(\reg(X),h)}\right)$$ for each $f\in \mathcal{D}(\Delta_{\overline{\partial}}^{\mathcal{F}})$. Hence the above inequality tells us that we have a continuous inclusion $\mathcal{D}(\Delta^{\mathcal{F}}_{\overline{\partial}})\hookrightarrow \mathcal{D}(\overline{\partial}_{\min})$ where each domain is endowed with the corresponding graph norm. Composing the latter inclusion with the inclusion $\mathcal{D}(\overline{\partial}_{\min})\hookrightarrow L^2(\reg(X),h)$ we have, thanks to third point above, that the inclusion  $\mathcal{D}(\Delta^{\mathcal{F}}_{\overline{\partial}})\hookrightarrow L^2(\reg(X),h)$ is actually a compact inclusion and this is well known to be equivalent to the fact that \eqref{wise} has discrete spectrum. The proof is thus complete.
\end{proof}

\begin{teo}
\label{spechk}
In the setting of Th. \ref{hklaplacian}. Let $(a,b)$ be any pair of positive numbers such that \eqref{aprile} holds true. Let $$0=\lambda_1<\lambda_2\leq\lambda_3\leq...\quad\quad \text{and}\quad\quad  0=\mu_1<\mu_2\leq \mu_3\leq...$$ be the eigenvalues of \eqref{fist} and \eqref{wise} respectively\footnote{  $0=\mu_1<\mu_2$ is a consequence of the fact that $\ker(\overline{\partial}_{\min})=\mathbb{C}$ as showed in \cite{JRu} Th. 1.6 or \cite{PS} Th. A.}. Then we have the following inequalities
\begin{equation}
\label{orociok}
\mu_k\leq \lambda_k \leq a+b\mu_k
\end{equation}
for each $k\in \mathbb{N}$.
As a first consequence we have 
\begin{equation}
\label{asilo}
\lim\inf \mu_k k^{-\frac{1}{v}}>0
\end{equation}
as $k\rightarrow \infty$. Finally let 
\begin{equation}
\label{lotto}
e^{-t\Delta_{\overline{\partial}}^{\mathcal{F}}}:L^2(\reg(X),h)\rightarrow L^2(\reg(X),h)
\end{equation}
be the heat operator associated to \eqref{wise}. Then \eqref{lotto} is a trace class operator and its trace satisfies the following estimates
\begin{equation}
\label{matriciana}
e^{-at}\Tr(e^{-tb\Delta_{\overline{\partial}}^{\mathcal{F}}})\leq \Tr(e^{-t\Delta^{\mathcal{F}}})\leq \Tr(e^{-t\Delta_{\overline{\partial}}^{\mathcal{F}}})
\end{equation}
and, for each $0<t\leq 1$,
\begin{equation}
\label{agamennone}
\Tr(e^{-t\Delta_{\overline{\partial}}^{\mathcal{F}}})\leq r\vol_h(\reg(X))t^{-v}
\end{equation}
where $r$ is some positive constant.
\end{teo}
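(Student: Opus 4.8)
The plan is to compare the two Friedrichs extensions at the level of their quadratic forms and then to invoke the Courant--Fischer min-max principle. The first and decisive step I would carry out is to verify that $\Delta^{\mathcal{F}}$ and $\Delta_{\overline{\partial}}^{\mathcal{F}}$ share a \emph{common form domain}. Indeed, the form domain of $\Delta^{\mathcal{F}}=d^t_{\max}\circ d_{\min}$ is $\mathcal{D}(d_{\min})=W^{1,2}_0(\reg(X),h)$, which equals $W^{1,2}(\reg(X),h)$ by the first point of Th. \ref{dollar}; on the other hand the form domain of $\Delta_{\overline{\partial}}^{\mathcal{F}}=\overline{\partial}^t_{\max}\circ\overline{\partial}_{\min}$ is $\mathcal{D}(\overline{\partial}_{\min})$, which coincides with $W^{1,2}(\reg(X),h)$ as a topological vector space by the first point of Th. \ref{hklaplacian}. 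On this common space the associated quadratic forms are $Q_\Delta(f)=\|df\|^2_{L^2\Omega^1(\reg(X),h)}$ and $Q_{\overline{\partial}}(f)=\|\overline{\partial}f\|^2_{L^2\Omega^{0,1}(\reg(X),h)}$. Since $\|df\|^2=\|\partial f\|^2+\|\overline{\partial}f\|^2$ we have $Q_{\overline{\partial}}\leq Q_\Delta$ pointwise, and since $C^{\infty}_c(\reg(X))$ is dense in $W^{1,2}(\reg(X),h)$ the inequality \eqref{aprile} extends by continuity to the whole form domain, yielding $Q_\Delta(f)\leq a\|f\|^2_{L^2(\reg(X),h)}+bQ_{\overline{\partial}}(f)$ there.

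With this in place I would apply the min-max characterization
\[
\mu_k=\min_{\substack{V\subset W^{1,2}(\reg(X),h)\\ \dim V=k}}\ \max_{0\neq f\in V}\frac{Q_{\overline{\partial}}(f)}{\|f\|^2_{L^2(\reg(X),h)}},
\]
together with the analogous formula for $\lambda_k$ with $Q_\Delta$ in place of $Q_{\overline{\partial}}$; this is legitimate precisely because both forms range over the \emph{same} family of admissible subspaces. The pointwise bound $Q_{\overline{\partial}}\leq Q_\Delta$ immediately gives $\mu_k\leq\lambda_k$. For the reverse inequality I would test the min-max for $\lambda_k$ against the $k$-dimensional span $V$ of the first $k$ eigenfunctions of $\Delta_{\overline{\partial}}^{\mathcal{F}}$ (which lie in $\mathcal{D}(\Delta_{\overline{\partial}}^{\mathcal{F}})\subset W^{1,2}(\reg(X),h)$, hence are admissible): the estimate $Q_\Delta(f)\leq a\|f\|^2+bQ_{\overline{\partial}}(f)$ then gives $\lambda_k\leq\max_{f\in V}Q_\Delta(f)/\|f\|^2\leq a+b\max_{f\in V}Q_{\overline{\partial}}(f)/\|f\|^2=a+b\mu_k$, which is \eqref{orociok}. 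The asymptotic \eqref{asilo} follows at once from $\mu_k\geq(\lambda_k-a)/b$, the estimate $\liminf\lambda_k k^{-1/v}>0$ of Th. \ref{Beltrami}, and the fact that $ak^{-1/v}\to0$.

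For the trace statements I would argue spectrally. Writing $\Tr(e^{-t\Delta^{\mathcal{F}}})=\sum_k e^{-t\lambda_k}$ and $\Tr(e^{-t\Delta_{\overline{\partial}}^{\mathcal{F}}})=\sum_k e^{-t\mu_k}$, the inequality $\mu_k\leq\lambda_k$ gives $e^{-t\lambda_k}\leq e^{-t\mu_k}$ and hence the right-hand inequality of \eqref{matriciana}, while $\lambda_k\leq a+b\mu_k$ gives $e^{-t\lambda_k}\geq e^{-at}e^{-tb\mu_k}$ and, after summing, the left-hand inequality of \eqref{matriciana}. Convergence of $\sum_k e^{-t\mu_k}$ (the trace-class property) follows from \eqref{asilo}, or more directly by comparison with $\sum_k e^{-(t/b)\lambda_k}$, which converges by Th. \ref{Beltrami}. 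Finally, to obtain \eqref{agamennone} I would combine the left inequality of \eqref{matriciana} with \eqref{cisiamo}: from $e^{-at}\Tr(e^{-tb\Delta_{\overline{\partial}}^{\mathcal{F}}})\leq\Tr(e^{-t\Delta^{\mathcal{F}}})\leq C\vol_h(\reg(X))t^{-v}$ one gets, for $0<t\leq1$, the bound $\Tr(e^{-tb\Delta_{\overline{\partial}}^{\mathcal{F}}})\leq e^aC\vol_h(\reg(X))t^{-v}$, and rescaling $t\mapsto t/b$ produces the claimed estimate on $\Tr(e^{-t\Delta_{\overline{\partial}}^{\mathcal{F}}})$ with $r$ a suitable multiple of $Cb^v$.

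The main subtle point I expect is the identification of the common form domain: the entire min-max comparison rests on both quadratic forms being defined on the \emph{same} Hilbert space $W^{1,2}(\reg(X),h)$, which is exactly what Th. \ref{dollar}(1) and the equivalence of the $\overline{\partial}_{\min}$-graph norm with the $W^{1,2}$-norm in Th. \ref{hklaplacian}(1) provide; without this one could not run the min-max over a common family of test subspaces. The only remaining care is the bookkeeping of the time-rescaling and the constant in \eqref{agamennone}, together with checking that the range $0<t\leq1$ is preserved (or adjusting $r$ on the complementary compact range via monotonicity of the trace), which is routine.
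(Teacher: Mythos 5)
Your proposal is correct and follows essentially the same route as the paper: both rest on the identification $\mathcal{D}(d_{\min})=W^{1,2}(\reg(X),h)=\mathcal{D}(\overline{\partial}_{\min})$ as a common form domain, the two quadratic-form inequalities $\|\overline{\partial}f\|^2\leq\|df\|^2\leq a\|f\|^2+b\|\overline{\partial}f\|^2$, and the min-max principle tested on spans of eigenfunctions, with the trace estimates then derived spectrally exactly as you do. Your explicit handling of the time-rescaling $t\mapsto t/b$ in \eqref{agamennone} (including the case $b<1$) is slightly more careful than the paper's ``follows immediately,'' but the argument is the same.
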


\begin{proof}
According to the min-max theorem, see for instance \cite{KSC}, we can characterize the eigenvalues of \eqref{fist} as 
$$\lambda_k=\inf_{F\in \mathfrak{F}_k\cap\mathcal{D}(\Delta^{\mathcal{F}})}\sup_{f\in F, f\neq 0}\frac{\langle\Delta^{\mathcal{F}}f,f\rangle_{L^2(\reg(X),h)}}{\|f\|^2_{L^2(\reg(X),h)}}=\inf_{F\in \mathfrak{F}_k\cap\mathcal{D}(d_{\min})}\sup_{f\in F, f\neq 0}\frac{\langle d_{\min}f,d_{\min}f\rangle_{L^2\Omega^1(\reg(X),h)}}{\|f\|^2_{L^2(\reg(X),h)}}$$
where $\mathfrak{F}_k$ denotes the set of linear subspaces of $L^2(\reg(X),h)$ of dimension at most $k$. Analogously for the eigenvalues of \eqref{wise} we have 
\begin{align}
\label{mini}
& \mu_k=\inf_{F\in \mathfrak{F}_k\cap\mathcal{D}(\Delta_{\overline{\partial}}^{\mathcal{F}})}\sup_{f\in F, f\neq 0}\frac{\langle \Delta_{\overline{\partial}}^{\mathcal{F}}f,f\rangle_{L^2(\reg(X),h)}}{\|f\|^2_{L^2(\reg(X),h)}}= \inf_{F\in \mathfrak{F}_k\cap\mathcal{D}(\overline{\partial}_{\min})}\sup_{f\in F, f\neq 0} \frac{\langle\overline{\partial}_{\min}f,\overline{\partial}_{\min}f\rangle_{L^2\Omega^{0,1}(\reg(X),h)}}{\|f\|^2_{L^2(\reg(X),h)}}.
\end{align}
Consider  an orthonormal basis of $L^2(\reg(X),h)$, $\{\phi_n,\ n\in \mathbb{N}\}$, made by eigensections of \eqref{fist} such that $\Delta^{\mathcal{F}}\phi_n=\lambda_n\phi_n$. Let us define  $F_k\in \mathfrak{F}_k$ as the $k$-dimensional subspace of $L^2(\reg(X),h)$ generated by $\{\phi_1,...,\phi_k\}$. Then, see for instance \cite{KSC} page 279, we have $$\lambda_k=\sup_{f\in F_k, f\neq 0}\frac{\langle d_{\min}f,d_{\min}f\rangle_{L^2\Omega^1(\reg(X),h)}}{\|f\|^2_{L^2(\reg(X),h)}}.$$
Hence we have 
\begin{align}
\nonumber & \lambda_k= \sup_{f\in F_k, f\neq 0}\frac{\langle d_{\min}f,d_{\min}f\rangle_{L^2\Omega^1(\reg(X),h)}}{\|f\|^2_{L^2(\reg(X),h)}}\geq  \sup_{f\in F_k, f\neq 0}\frac{\langle\overline{\partial}_{\min}f,\overline{\partial}_{\min}f\rangle_{L^2\Omega^{0,1}(\reg(X),h)}}{\|f\|^2_{L^2(\reg(X),h)}}\\
\nonumber & \geq  \inf_{F\in \mathfrak{F}_k\cap\mathcal{D}(\overline{\partial}_{\min})}\sup_{f\in F, f\neq 0}\frac{\langle \overline{\partial}_{\min}f,\overline{\partial}_{\min}f\rangle_{L^2\Omega^{0,1}(\reg(X),h)}}{\|f\|^2_{L^2(\reg(X),h)}}=\mu_k.
\end{align}
This establishes the first part of  \eqref{orociok}. In order to establish the other inequality let $\{\psi_n,\ n\in \mathbb{N}\}$ be  an orthonormal basis of $L^2(\reg(X),h)$ made by eigensections of \eqref{wise} such that $\Delta_{\overline{\partial}}^{\mathcal{F}}\psi_n=\mu_n\psi_n$. Similarly to the previous case let us define $G_k\in \mathfrak{F}_k$ as the $k$-dimensional subspace of $L^2(\reg(X),h)$ generated by $\{\psi_1,...,\psi_k\}$. Let $(a,b)$ be a pair of positive constants such that \eqref{aprile} holds true. Then, by \eqref{aprile} and Th. \ref{hklaplacian}, we have 
\begin{align}
\nonumber & a+b\mu_k=\\
\nonumber & a+b\left(\sup_{f\in G_k, f\neq 0}\frac{\langle \overline{\partial}_{\min}f,\overline{\partial}_{\min}f\rangle_{L^2\Omega^{0,1}(\reg(X),h)}}{\|f\|^2_{L^2(\reg(X),h)}}\right)=\sup_{f\in G_k, f\neq 0}\frac{a\|f\|^2_{L^2(\reg(X),h)}+b\langle\overline{\partial}_{\min}f,\overline{\partial}_{\min}f\rangle_{L^2\Omega^{0,1}(\reg(X),h)}}{\|f\|^2_{L^2(\reg(X),h)}}\\
\nonumber & \geq \sup_{f\in G_k, f\neq 0}\frac{\langle d_{\min}f,d_{\min}f\rangle_{L^2\Omega^1(\reg(X),h)}}{\|f\|^2_{L^2(\reg(X),h)}} \geq \inf_{F\in \mathfrak{F}_k\cap\mathcal{D}(d_{\min})}\sup_{f\in F, f\neq 0}\frac{\langle d_{\min}f,d_{\min}f\rangle_{L^2\Omega^1(\reg(X),h)}}{\|f\|^2_{L^2(\reg(X),h)}}=\lambda_k.
\end{align}
This establishes \eqref{orociok}. Property \eqref{asilo}  follows now easily by \eqref{orociok} and Th. \ref{Beltrami}. Indeed $\lim\inf \mu_k k^{-\frac{1}{v}}>0$ as $k\rightarrow \infty$ if and only if  $\lim\inf (a+b\mu_k) k^{-\frac{1}{v}}>0$ as $k\rightarrow \infty$ and this last inequality follows by \eqref{asy} and \eqref{orociok}. The asymptotic inequality \eqref{asilo} implies that $\sum_{k\in \mathbb{N}} e^{-t\mu_k}<\infty$ and this allows us to conclude that \eqref{lotto} is a trace class operator. For each $t>0$, we have $\sum_{k\in \mathbb{N}}e^{-t(a+b\mu_k)}\leq \sum_{k\in \mathbb{N}}e^{-t\lambda_k}\leq \sum_{k\in \mathbb{N}}e^{-t\mu_k}$ which in turn implies  that  $$e^{-at}\Tr(e^{-tb\Delta_{\overline{\partial}}^{\mathcal{F}}})\leq \Tr(e^{-t\Delta^{\mathcal{F}}})\leq \Tr(e^{-t\Delta_{\overline{\partial}}^{\mathcal{F}}}).$$ Finally \eqref{agamennone} follows immediately by \eqref{cisiamo} and \eqref{matriciana}. 
\end{proof}

In the remaining part of this section we collect some consequences of the previous theorems.

\begin{cor}
\label{attore}
Let $(X,h)$ be as in Th. \ref{dollar}. Let $E\rightarrow X$ be a holomorphic vector bundle of complex rank $s$. Let $\rho$ be a Hermitian metric on $E|_{\reg(X)}$. Assume that for each point $p\in X$ there exists an open neighborhood $U$, a positive constant $c$ and a holomorphic trivialization $\psi:E|_U\rightarrow U\times \mathbb{C}^s$ such that, labeling by $\sigma$ the Hermitian metric on $\reg(U)\times \mathbb{C}^s$ induced by $\rho$ through $\psi$, we have 
\begin{equation}
\label{trappolo}
c^{-1}\tau\leq \sigma\leq c\tau
\end{equation}
 where $\tau$ is the Hermitian metric on $\reg(U)\times \mathbb{C}^s$ that assigns to each point of $\reg(U)$ the standard K\"ahler metric of $\mathbb{C}^s$.
Let $\overline{\partial}_E:C^{\infty}_c(\reg(X),E|_{\reg(X)})\rightarrow \Omega^{0,1}_c(\reg(X),E|_{\reg(X)})$ be the associated Dolbeault operator and let $$\overline{\partial}_{E,\min}:L^2(\reg(X),E|_{\reg(X)})\rightarrow L^2\Omega^{0,1}(\reg(X),E|_{\reg(X)})$$ be its minimal extension. We have the following properties:
\begin{enumerate}
\item The inclusion $\mathcal{D}(\overline{\partial}_{E,\min})\hookrightarrow L^2(\reg(X),E|_{\reg(X)})$ is a compact operator, where $\mathcal{D}(\overline{\partial}_{E,\min})$ is endowed with the corresponding graph norm.
\item The operator $\overline{\partial}_{E,\max}^t\circ\overline{\partial}_{E,\min}:L^2(\reg(X),E|_{\reg(X)})\rightarrow L^2(\reg(X),E|_{\reg(X)})$ has discrete spectrum.
\end{enumerate}
\end{cor}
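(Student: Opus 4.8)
The plan is to derive both assertions from the scalar results of Th. \ref{hklaplacian} by reducing the bundle-valued problem to its components through a holomorphic trivialization. First I would observe that the second assertion is a formal consequence of the first. Indeed, by the analog of \cite{BLE} Lemma 3.1 the operator $\overline{\partial}_{E,\max}^t\circ\overline{\partial}_{E,\min}$ is the Friedrichs extension of the Hodge-Kodaira Laplacian $\Delta_{\overline{\partial},E}$ acting on $C^{\infty}_c(\reg(X),E|_{\reg(X)})$, and for every $f$ in its domain the estimate $\|\overline{\partial}_{E,\min}f\|^2\leq \tfrac12\big(\|f\|^2+\|\overline{\partial}_{E,\max}^t\overline{\partial}_{E,\min}f\|^2\big)$ (Cauchy-Schwarz together with $2ab\leq a^2+b^2$) yields a continuous inclusion $\mathcal{D}(\overline{\partial}_{E,\max}^t\circ\overline{\partial}_{E,\min})\hookrightarrow \mathcal{D}(\overline{\partial}_{E,\min})$ of the graph-normed domains. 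Composing with the compact inclusion of part 1 gives a compact inclusion $\mathcal{D}(\overline{\partial}_{E,\max}^t\circ\overline{\partial}_{E,\min})\hookrightarrow L^2(\reg(X),E|_{\reg(X)})$, which is equivalent to discreteness of the spectrum, exactly as in the proof of Th. \ref{hklaplacian}.

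The heart of the matter is thus the compact inclusion in part 1, and the key local observation is that in a holomorphic frame $\overline{\partial}_E$ acts diagonally. Concretely, if $\psi:E|_U\to U\times\mathbb{C}^s$ is one of the holomorphic trivializations of the hypothesis and a section $\eta$ supported in $U$ has components $\psi\eta=(\eta_1,\dots,\eta_s)$, then, the transition data being holomorphic, $\psi(\overline{\partial}_E\eta)=(\overline{\partial}\eta_1,\dots,\overline{\partial}\eta_s)$; moreover the quasi-isometry \eqref{trappolo} between $\sigma$ and the flat metric $\tau$ makes $\|\eta\|_{L^2(\cdot,\rho)}$ and $\|\overline{\partial}_E\eta\|_{L^2}$ comparable, with constants depending only on $c$ and $s$, to $\big(\sum_j\|\eta_j\|^2_{L^2}\big)^{1/2}$ and $\big(\sum_j\|\overline{\partial}\eta_j\|^2_{L^2}\big)^{1/2}$. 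Hence, after a smooth cutoff supported in $U$, membership in and boundedness in $\mathcal{D}(\overline{\partial}_{E,\min})$ localize to the corresponding scalar statements for each component $\eta_j\in\mathcal{D}(\overline{\partial}_{\min})$.

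With this reduction in hand I would run the same two-step scheme as in Th. \ref{dollar} and Th. \ref{hklaplacian}. Using the partition of unity $\{\gamma_k\}$ built in the proof of Th. \ref{dollar}, with its gradient bounds \eqref{mcvf} and compactly supported pieces subordinate to trivializing patches, write $\eta=\sum_k\gamma_k\eta$ and note $\overline{\partial}_E(\gamma_k\eta)=\gamma_k\overline{\partial}_E\eta+\overline{\partial}\gamma_k\otimes\eta$, the last term being bounded in $L^2$ by $\|d\gamma_k\|_{L^{\infty}}\|\eta\|_{L^2}$. Applying the scalar Sobolev embeddings \eqref{metz} to each component $(\gamma_k\eta)_j$ and summing produces a continuous inclusion $\mathcal{D}(\overline{\partial}_{E,\min})\hookrightarrow L^{\frac{2v}{v-1}}(\reg(X),E|_{\reg(X)})$; feeding this improved integrability into the abstract mechanism used in the scalar case (the bundle analog of Prop. 4.3 in \cite{FraBei}) then yields the desired compact inclusion into $L^2$. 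Alternatively, one can argue compactness directly: given a bounded sequence $\{\eta_n\}$ in $\mathcal{D}(\overline{\partial}_{E,\min})$, each family $\{(\gamma_k\eta_n)_j\}_n$ is bounded in the scalar $\mathcal{D}(\overline{\partial}_{\min})$, so Th. \ref{hklaplacian}(3) and a diagonal argument over the finitely many indices $k,j$ extract an $L^2$-convergent subsequence, which transports back through the trivializations to an $L^2(\reg(X),E|_{\reg(X)})$-convergent subsequence of $\{\eta_n\}$.

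I expect the main obstacle to be the uniform bookkeeping in this localization: one must check that cutting off by the partition of unity and passing through the holomorphic trivializations carries $\mathcal{D}(\overline{\partial}_{E,\min})$ into products of scalar minimal domains with norms that stay uniformly comparable across all patches, the uniformity resting on the quasi-isometry constants in \eqref{trappolo} together with the global gradient bounds \eqref{mcvf}. Once the diagonal statement that $\overline{\partial}_E$ acts component-wise in a holomorphic frame is set up carefully, the remaining steps are faithful copies of the scalar arguments.
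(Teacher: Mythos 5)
Your proposal is correct and follows essentially the same route as the paper: reduce to part 1, cut off with the partition of unity of \eqref{goodpart}--\eqref{mcvf}, pass through the holomorphic trivializations using the quasi-isometry \eqref{trappolo} so that $\overline{\partial}_E$ acts componentwise, and apply the scalar compactness of Th. \ref{hklaplacian}(3) to each component before extracting a subsequence by a diagonal argument --- this is precisely the paper's proof (your ``alternative'' direct argument is the one the paper actually writes, while your first route via the $L^{\frac{2v}{v-1}}$ embedding is a viable variant the paper does not pursue).
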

\begin{proof}
As for instance  remarked in the proof of Th. \ref{hklaplacian} it is enough to show only the first property. Let $\{s_n\}_{n\in \mathbb{N}}$ be any bounded sequence in $\mathcal{D}(\overline{\partial}_{E,\min})$. Without loss of generality we can assume that $\{s_n\}_{n\in \mathbb{N}}\subset C^{\infty}_c(\reg(X),E)$. Let us start by  considering an open cover $\mathcal{W}:=\{V_{q_1},...,V_{q_n},V_{p_1},...,V_{p_m}\}$ with a subordinated continuous  partition of unity $\{\gamma_{q_1},...,\gamma_{q_n},\gamma_{p_1},...,\gamma_{p_m}\}$ both as in \eqref{goodpart}.  We recall that $\overline{V_{q_i}}\cap \sing(X)=\emptyset$ for $i=1,...,n$. Moreover, as in the proof of Th. \ref{hklaplacian}, we can assume that $\mathcal{W}$ is made in such a way that for each $i\in \{1,...,n\}$ there exists a relatively compact open subset $B_{q_i}\subset \mathbb{C}^v$ with smooth boundary and a biholomorphism  $\psi_i:V_{q_i}\rightarrow B_{q_i}$ that extends to a diffeomorphism $\psi_i:\overline{V_{q_i}}\rightarrow \overline{B_{q_i}}$. Additionally, without loss of generality, we can assume that the integers $n$ and $m$, the points $q_1,...,q_n,p_1,...,p_m$ and the corresponding neighborhoods $U_{q_1},...,U_{q_n},V_{p_1},...,V_{p_m}$ are chosen  in such a way that there exist open neighborhoods with compact closure $\tilde{V}_{q_1},...,\tilde{V}_{q_n},\tilde{V}_{p_1},...,\tilde{V}_{p_m}$ such that $\overline{V_{q_i}}\subset \tilde{V}_{q_i}$, there is a holomorphic trivialization $\psi_{q_i}:E|_{\tilde{V}_{q_i}}\rightarrow \tilde{V}_{q_i}\times \mathbb{C}^s$, $\overline{V_{p_j}}\subset \tilde{V}_{p_j}$ and there is a holomorphic trivialization $\psi_{p_j}:E|_{\tilde{V}_{p_j}}\rightarrow \tilde{V}_{p_j}\times \mathbb{C}^s$ that satisfies \eqref{trappolo}. Consider now the sequence $\{\gamma_{p_j}s_n\}_{n\in \mathbb{N}}\subset C^{\infty}_c(\reg(V_{p_j}),E|_{\reg(V_{p_j})})$ for an arbitrary $j\in \{1,...,m\}$. Let $\psi_{p_j}:E|_{V_{p_j}}\rightarrow V_{p_j}\times \mathbb{C}^s$ be a holomorphic trivialization that obeys \eqref{trappolo}. Let $\{\mathfrak{f}_{p_j,n}\}_{n\in \mathbb{N}}\subset C^{\infty}_c(\reg(V_{p_j}),\mathbb{C}^s)$ be the sequence given by  $\mathfrak{f}_{p_j,n}:=\psi_{p_j}\circ (\gamma_{p_j}s_n)$. For each $n\in \mathbb{N}$ let  $(f_{p_j,1,n},...,f_{p_j,s,n})$  be the $s$ components of $\mathfrak{f}_{p_j,n}$. By the assumptions, in particular \eqref{mcvf} and \eqref{trappolo}, we have that $\{\mathfrak{f}_{p_j,n}\}_{n\in \mathbb{N}}$ is a bounded sequence in $\mathcal{D}(\overline{\partial}_{\min})$, the domain of 
\begin{equation}
\label{mascarus}
\overline{\partial}_{\min}:L^2(\reg(V_{p_j}),  \mathbb{C}^s)\rightarrow L^2(\reg(V_{p_j}), \Lambda^{0,1}(\reg(V_{p_j}))\otimes  \mathbb{C}^s)
\end{equation}
where in \eqref{mascarus} $\Lambda^{0,1}(\reg(V_{p_j}))$ is endowed with the Hermitian metric induced by $h$ and $\reg(V_{p_j})\times \mathbb{C}^s$ is endowed with $\tau$, see \eqref{trappolo}. The fact that $\{\mathfrak{f}_{p_j,n}\}_{n\in \mathbb{N}}$ is a bounded sequence in the domain of \eqref{mascarus} implies in turn that  $\{f_{p_j,1,n}\}_{n\in \mathbb{N}}$,...,$\{f_{p_j,s,n}\}_{n\in \mathbb{N}}$ are $s$ bounded sequences in the domain of 
\begin{equation}
\label{mascaruz}
\overline{\partial}_{\min}:L^2(\reg(V_{p_j}),h|_{\reg(V_{p_j})})\rightarrow L^2\Omega^{0,1}(\reg(V_{p_j}),h|_{\reg(V_{p_j})}).
\end{equation}
Therefore as a consequence of Th. \ref{hklaplacian} we can deduce the existence of a countable subset  $A\subset \mathbb{N}$ such that $\{f_{p_j,1,n}\}_{n\in A}$,...,$\{f_{p_j,s,n}\}_{n\in A}$ are $s$ convergent sequences in $L^2(\reg(V_{p_j}),h|_{\reg(V_{p_j})})$. Clearly this is equivalent to saying that $\{(f_{p_j,1,n},...,f_{p_j,s,n})\}_{n\in A}$ is a convergent sequence in $L^2(\reg(V_{p_j}),\mathbb{C}^s)$, where $\reg(V_{p_j})\times\mathbb{C}^s$ is endowed with $\tau$, and finally this amounts to saying that $\{\gamma_{p_j}s_n\}_{n\in A}$ is a convergent sequence in $L^2(\reg(V_{p_j}),E|_{\reg(V_{p_j})})$.  Clearly arguing in the same way we can show the existence of a convergent subsequence of $\{\gamma_{q_i}s_n\}_{n\in \mathbb{N}}$ for each $i\in \{1,...,n\}$. Summarizing we have proved the existence of an open cover $\mathcal{W}=\{V_{q_1},...,V_{q_n},V_{p_1},...,V_{p_m}\}$ of $X$ with a subordinated continuous partition of unity $\{\gamma_{q_1},...,\gamma_{q_n},\gamma_{p_1},...,\gamma_{p_m}\}$ such that $\{\gamma_{p_j}s_n\}_{n\in \mathbb{N}}$ admits a convergent subsequence in $L^2(\reg(X),E|_{\reg(X)})$ for each $j\in \{1,...,m\}$ and analogously $\{\gamma_{q_i}s_n\}_{n\in \mathbb{N}}$ admits a convergent subsequence in $L^2(\reg(X),E|_{\reg(X)})$ for each $i\in \{1,...,n\}$. It is now straightforward to check that $\{s_n\}_{n\in \mathbb{N}}$ admits a convergent subsequent in $L^2(\reg(X),E|_{\reg(X)})$. The proof is thus concluded.
\end{proof}

We add the following remark to the previous corollary. Let $(X,h)$ and $p:E\rightarrow X$ be as in Cor. \ref{attore}. Let $\pi:M\rightarrow X$ be a resolution of $X$. Let $F:=p^*E$ and let $\upsilon$ be any Hermitian metric on $F$. Finally let us define $\rho$ as the Hermitian metric on $E|_{\reg(X)}$ such that $(\pi|_{M\setminus D})^*\rho=\upsilon|_{M\setminus D}$ where $D=\pi^{-1}(\sing(X))$. Then it is easy to check that $(X,E,\rho)$ satisfy the assumptions of Cor. \ref{attore}.

\begin{cor}
\label{pino}
Let $(X,h)$ be as in Th. \ref{dollar}. Let $E$ be a vector bundle over $\reg(X)$ endowed with a metric  $\tau$. Finally let $\nabla:C^{\infty}(\reg(V),E)\rightarrow C^{\infty}(\reg(V),T^*\reg(X)\otimes E)$ be a metric connection. We have the following properties:
\begin{itemize}
\item $W^{1,2}(\reg(X),E)=W^{1,2}_0(\reg(X),E)$.
\item Assume that $v>1$. Then there exists a continuous inclusion $W^{1,2}(\reg(X),E)\hookrightarrow L^{\frac{2v}{v-1}}(\reg(X),E)$.
\item Assume that $v>1$. Then the inclusion $W^{1,2}(\reg(X),E)\hookrightarrow L^2(\reg(X),E)$ is a compact operator.
\end{itemize}
\end{cor}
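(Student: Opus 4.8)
My plan rests on the pointwise Kato inequality, which is the bridge between sections of $E$ and the scalar results of Th. \ref{dollar}. Since $\nabla$ is a \emph{metric} connection, every $s\in W^{1,2}(\reg(X),E)=\mathcal{D}(\nabla_{\max})$ satisfies $\lvert s\rvert\in W^{1,2}(\reg(X),h)$ together with $\lvert d\lvert s\rvert\rvert\le\lvert\nabla s\rvert$ almost everywhere. The point is that this reduces the integrability and the parabolicity statements to the scalar level, while the genuinely bundle-theoretic issue only shows up in the compactness assertion.

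For the first property I would use the parabolicity of $(\reg(X),h)$ already exploited in the proof of Th. \ref{dollar}: there are cut-off functions $\phi_n\in C^{\infty}_c(\reg(X))$ with $0\le\phi_n\le1$, $\phi_n\to1$ almost everywhere, and $\|d\phi_n\|_{L^2\Omega^1(\reg(X),h)}\to0$. Given $s\in W^{1,2}(\reg(X),E)$, I would first truncate by its norm, setting $s_N:=s$ where $\lvert s\rvert\le N$ and $s_N:=Ns/\lvert s\rvert$ where $\lvert s\rvert>N$. Splitting $\nabla s$ pointwise into its radial part $(s/\lvert s\rvert)\otimes d\lvert s\rvert$ and its orthogonal complement (the same decomposition underlying Kato's inequality) one checks that $\lvert\nabla s_N\rvert\le\lvert\nabla s\rvert$ a.e., so $s_N\in W^{1,2}(\reg(X),E)$ and $s_N\to s$ in $W^{1,2}$ by dominated convergence. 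As each $s_N$ is bounded, $\nabla(\phi_n s_N)=d\phi_n\otimes s_N+\phi_n\nabla s_N$ with $\|d\phi_n\otimes s_N\|_{L^2}\le N\|d\phi_n\|_{L^2}\to0$, so $\phi_n s_N\to s_N$ in $W^{1,2}$; each $\phi_n s_N$ is compactly supported in $\reg(X)$, hence lies in $W^{1,2}_0(\reg(X),E)$ after mollification in local charts, and a diagonal argument gives $s\in W^{1,2}_0(\reg(X),E)$. The reverse inclusion is trivial. The second property is then immediate from Kato's inequality and the second point of Th. \ref{dollar}: for $s\in W^{1,2}(\reg(X),E)$ one has $\|s\|_{L^{\frac{2v}{v-1}}(\reg(X),E)}=\|\lvert s\rvert\|_{L^{\frac{2v}{v-1}}(\reg(X),h)}\le c\,\|\lvert s\rvert\|_{W^{1,2}(\reg(X),h)}\le c\,\|s\|_{W^{1,2}(\reg(X),E)}$.

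The compact inclusion is the delicate point, and the difficulty is exactly that an arbitrary metric connection need not have bounded coefficients near $\sing(X)$, so — unlike in Cor. \ref{attore}, where a holomorphic trivialisation reduces matters to scalar Dolbeault compactness — one cannot trivialise and invoke scalar compactness on the singular charts. My plan is to combine an interior Rellich argument with a tail estimate supplied by the gain of integrability of the second property. Let $\{s_n\}$ be bounded by $M$ in $W^{1,2}(\reg(X),E)$. For an open $K\supset\sing(X)$ in $X$, Hölder's inequality and the second property give $\|s_n\|_{L^2(\reg(X)\cap K,E)}\le\vol_h(\reg(X)\cap K)^{\frac{1}{2v}}\,\|s_n\|_{L^{\frac{2v}{v-1}}(\reg(X),E)}\le c\,M\,\vol_h(\reg(X)\cap K)^{\frac{1}{2v}}$; since $\sing(X)$ has measure zero and $\vol_h(\reg(X))<\infty$, shrinking $K$ makes this tail uniformly as small as desired. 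On the complement $\reg(X)\setminus K=X\setminus K$, which is a compact subset of the manifold $\reg(X)$, the connection coefficients are smooth and hence bounded, $E$ trivialises over finitely many relatively compact charts, and the classical Rellich--Kondrachov theorem yields a subsequence converging in $L^2(\reg(X)\setminus K,E)$. Letting $K$ shrink along a decreasing sequence, extracting diagonally, and using the uniform smallness of the tails then produces a subsequence of $\{s_n\}$ that is Cauchy in $L^2(\reg(X),E)$, which is the assertion. The Hölder tail estimate, replacing the holomorphic trivialisation of Cor. \ref{attore}, is the key new ingredient, and the main thing to verify carefully is that the smoothness of $\nabla$ indeed bounds the connection coefficients on the compact interior piece $\reg(X)\setminus K$.
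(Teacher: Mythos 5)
Your proof is correct and follows essentially the same route as the paper, whose own proof consists of invoking the parabolicity of $(\reg(X),h)$ and then citing Th. \ref{dollar} together with Prop. 4.1 (and, for compactness, the argument of Prop. 4.3) of \cite{FraBei}. The details you supply --- Kato's inequality plus truncation and parabolic cut-offs for the density and the $L^{\frac{2v}{v-1}}$ bound, and the H\"older tail estimate combined with interior Rellich for the compact inclusion --- are precisely the content of those cited propositions, so there is no genuine divergence of method.
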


\begin{proof}
According to \cite{BeGu} or \cite{JRU} we know that $(\reg(X),h)$  is  parabolic. Now the statement follows by Th. \ref{dollar} and Prop. 4.1 in \cite{FraBei}.
\end{proof}

\begin{cor}
\label{nove}
Let $(X,h)$ be as in Theorem \ref{dollar}. Let  $E$ and $F$ be two vector bundles over $\reg(X)$ endowed respectively with  metrics $\tau$ and $\rho$. Finally let  $\nabla:C^{\infty}(\reg(X),E)\rightarrow C^{\infty}(M,T^*\reg(X)\otimes E)$
be a metric connection. Consider a  first order differential operator of this type:
\begin{equation}
\label{gazzzz}
D:=\theta_0\circ \nabla:C^{\infty}_c(\reg(X),E)\rightarrow C^{\infty}_c(\reg(X),F)
\end{equation}
where $\theta_0\in  C^{\infty}(\reg(X),\Hom(T^*\reg(X)\otimes E,F)).$ Assume that  $\theta_0$ extends as a bounded operator  $$\theta:L^2(\reg(X), T^*\reg(X)\otimes E)\rightarrow L^2(\reg(X), F).$$  Then we have the following inclusion:
\begin{equation}
\label{cantianobello}
\mathcal{D}(D_{\max})\cap L^{\infty}(\reg(X),E)\subset \mathcal{D}(D_{\min}).
\end{equation}
In particular \eqref{cantianobello} holds when $D$ is the de Rham differential $d_k:\Omega_{c}^k(\reg(X))\rightarrow \Omega^{k+1}_c(\reg(X))$, a Dirac  operator   $D:C^{\infty}_c(\reg(X),E)\rightarrow C^{\infty}_c(\reg(X),E)$ or the Dolbeault operator $\overline{\partial}_{p,q}:\Omega_{c}^{p,q}(\reg(X))\rightarrow \Omega^{p,q+1}_c(\reg(X))$.
\end{cor}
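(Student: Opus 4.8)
The plan is to realize any $s\in\mathcal{D}(D_{\max})\cap L^{\infty}(\reg(X),E)$ as a limit, in the graph norm of $D_{\max}$, of \emph{compactly supported} sections, and then to invoke the standard fact that a compactly supported element of $\mathcal{D}(D_{\max})$ automatically lies in $\mathcal{D}(D_{\min})$ (Friedrichs mollification in the interior of the smooth manifold $\reg(X)$). Since $\mathcal{D}(D_{\min})$ is closed in the graph norm, this forces $s\in\mathcal{D}(D_{\min})$. The cutting-off is exactly where parabolicity enters: because $(\reg(X),h)$ is parabolic (equivalently, since $\vol_h(\reg(X))<\infty$ gives $1\in W^{1,2}(\reg(X),h)=W^{1,2}_0(\reg(X),h)$ by the first point of Th.~\ref{dollar}), there is a sequence $\phi_i\in C^{\infty}_c(\reg(X))$ with $0\le\phi_i\le 1$, $\phi_i\to 1$ pointwise a.e., and $\|d\phi_i\|_{L^2\Omega^1(\reg(X),h)}\to 0$ (truncating and passing to a subsequence if needed).

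First I would record the commutator identity for the first-order operator $D=\theta_0\circ\nabla$: for $\phi\in C^{\infty}_c(\reg(X))$ and $s\in\mathcal{D}(D_{\max})$ one has $\phi s\in\mathcal{D}(D_{\max})$ with
\begin{equation}
\nonumber
D_{\max}(\phi s)=\phi\, D_{\max}s+\theta_0(d\phi\otimes s),
\end{equation}
the second term being the action of the principal symbol; this is a distributional identity that does \emph{not} require $\nabla s$ itself to be $L^2$. Applying it with $\phi=\phi_i$ produces a sequence $\phi_i s$ of compactly supported sections of $\mathcal{D}(D_{\max})$, hence of $\mathcal{D}(D_{\min})$. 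It then remains to check three convergences as $i\to\infty$: $\phi_i s\to s$ and $\phi_i D_{\max}s\to D_{\max}s$ in $L^2$, both by dominated convergence since $0\le\phi_i\le 1$ and $s,\,D_{\max}s\in L^2$; and $\theta_0(d\phi_i\otimes s)\to 0$ in $L^2$, because the boundedness of $\theta$ gives $\|\theta_0(d\phi_i\otimes s)\|_{L^2}\le\|\theta\|_{\op}\,\||d\phi_i|\,|s|\|_{L^2}\le\|\theta\|_{\op}\,\|s\|_{L^{\infty}}\,\|d\phi_i\|_{L^2\Omega^1(\reg(X),h)}\to 0$. Thus $\phi_i s\to s$ in the graph norm, and closedness of $D_{\min}$ yields $s\in\mathcal{D}(D_{\min})$.

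The step I expect to be the genuine obstacle — and the reason the hypotheses are exactly these — is the vanishing of the symbol term: it is precisely the combination $s\in L^{\infty}$, parabolicity ($\|d\phi_i\|_{L^2}\to0$), and the $L^2$-boundedness of $\theta$ that makes $\theta_0(d\phi_i\otimes s)$ negligible, whereas dropping the $L^{\infty}$ bound would leave $|d\phi_i|\,|s|$ uncontrolled. A secondary point needing care is that $\phi_i s$ is only $L^{\infty}$ with compact support, so one must genuinely use the ``compact support $+$ maximal $\Rightarrow$ minimal'' lemma rather than assume smoothness of the approximants. Finally, for the ``in particular'' assertions I would exhibit each operator in the form $\theta_0\circ\nabla$ with $\theta_0$ a pointwise bounded bundle morphism (so that $\theta$ is automatically $L^2$-bounded): for the de Rham differential $d$ one takes $\nabla$ the connection induced by the Levi-Civita connection on forms and $\theta_0$ exterior multiplication $\alpha\otimes\omega\mapsto\alpha\wedge\omega$; for a Dirac operator $\theta_0$ is Clifford multiplication; and for $\overline{\partial}_{p,q}$ one takes $\nabla$ a metric (e.g.\ Chern) connection on $\Lambda^{p,q}$ and $\theta_0$ the composition of projection of $T^*\reg(X)$ onto its $(0,1)$-part with wedging onto $\Lambda^{p,q+1}$ — in each case a morphism of pointwise operator norm bounded by a dimensional constant.
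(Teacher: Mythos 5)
Your argument is correct and is essentially the paper's proof written out in full: the paper simply invokes the parabolicity of $(\reg(X),h)$ together with Prop.\ 4.2 of \cite{FraBei}, and that cited proposition is proved by exactly your mechanism (cutoffs $\phi_i$ with $\|d\phi_i\|_{L^2}\to 0$ coming from parabolicity, the distributional Leibniz rule, the bound $\|\theta_0(d\phi_i\otimes s)\|_{L^2}\leq\|\theta\|_{\op}\|s\|_{L^{\infty}}\|d\phi_i\|_{L^2}$, and closedness of $D_{\min}$). No gaps; the only cosmetic caveat is that the truncated cutoffs are a priori Lipschitz rather than smooth, which you already acknowledge and which is harmless.
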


\begin{proof}
This follows by the fact that $(\reg(X),h)$ is parabolic and Prop. 4.2 in \cite{FraBei}
\end{proof}

We point out that in the above corollary the  case of the Dolbeault operator has been already treated in \cite{JRU}. Consider again a compact and irreducible Hermitian complex space $(X,h)$ of complex dimension $v$. Let $\reg(X)$ be its regular part and let $E$ be a vector bundle over $\reg(X)$ endowed with a metric $\sigma$. Finally let $\nabla:C^{\infty}(\reg(V),E)\rightarrow C^{\infty}(\reg(V),T^*M\otimes E)$ be a metric connection.  We consider  a Schr\"odinger-type operators
\begin{equation}
\label{polvere}
\nabla^t\circ\nabla +L 
\end{equation}
 where $\nabla^t: C_c^{\infty}(\reg(X),T^*M\otimes E)\rightarrow C_c^{\infty}(\reg(X),E)$ is the formal adjoint of $\nabla$ and $L\in C^{\infty}(\reg(X),\End(E))$.

\begin{cor}
\label{skernel}
Let $V$, $E$, $\sigma$, $h$, $L$ and $\nabla$ be as described above. Assume $v>1$. Let $$P:=\nabla^t\circ \nabla +L,\ P:C^{\infty}_c(\reg(X),E)\rightarrow C_c^{\infty}(\reg(X),E)$$ be a Schr\"odinger type operator. Assume that:
\begin{itemize}
\item $P$ is symmetric and  positive.
\item    There is a constant $c\in \mathbb{R}$ such that, for each $s\in C^{\infty}(\reg(V),E)$, we have  $$\sigma(Ls,s)\geq c\sigma(s,s).$$
\end{itemize}
Let $P^{\mathcal{F}}:L^2(\reg(X),E)\rightarrow L^2(\reg(X),E)$ be the Friedrichs extension of $P$ and let  $\Delta^{\mathcal{F}}:L^2(\reg(X),g)\rightarrow L^2(\reg(X),g)$  be the Friedrichs extension of $\Delta:C^{\infty}_c(\reg(X))\rightarrow C^{\infty}_c(\reg(X))$. Then the heat operator associated to $P^{\mathcal{F}}$ $$e^{-tP^{\mathcal{F}}}:L^{2}(\reg(X),E)\longrightarrow L^2(\reg(X),E)$$  
is a trace class operator and its trace satisfies the following inequality: 
\begin{equation}
\label{marz}
\Tr(e^{-tP^{\mathcal{F}}})\leq me^{-tc}\Tr(e^{-t\Delta^{\mathcal{F}}}).
\end{equation}
where $m$ is the rank of the vector bundle $E$.
\end{cor}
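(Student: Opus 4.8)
The plan is to dominate the heat trace of $P^{\mathcal{F}}$ by that of the scalar operator $\Delta^{\mathcal{F}}$ via a two-step comparison: first absorb the potential using the lower bound $\sigma(Ls,s)\geq c\,\sigma(s,s)$, and then pass from the bundle Bochner Laplacian $\nabla^t\nabla$ to the Laplace--Beltrami operator through Kato's inequality, the rank $m$ entering only in this second step. Before the estimate one records that $P^{\mathcal{F}}$ has discrete spectrum: since $(\reg(X),h)$ is parabolic, Cor. \ref{pino} furnishes the compact inclusion $W^{1,2}(\reg(X),E)\hookrightarrow L^2(\reg(X),E)$, and because $Q_P(s)=\|\nabla s\|^2+\langle Ls,s\rangle\geq\|\nabla s\|^2+c\|s\|^2$ the form domain of $P^{\mathcal{F}}$ is continuously contained in $W^{1,2}(\reg(X),E)$; hence it embeds compactly into $L^2$ and the spectrum is discrete.

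For the first step I would apply the min-max principle with the common form core $C^{\infty}_c(\reg(X),E)$. Writing $0\leq\nu_1\leq\nu_2\leq\dots$ for the eigenvalues of $P^{\mathcal{F}}$ and $0\leq\mu_1\leq\mu_2\leq\dots$ for those of the Friedrichs extension $(\nabla^t\nabla)^{\mathcal{F}}$ of the Bochner Laplacian, the pointwise bound $Q_P(s)\geq \|\nabla s\|^2+c\|s\|^2$ gives $\nu_k\geq\mu_k+c$ for every $k$, and therefore
\[
\Tr(e^{-tP^{\mathcal{F}}})=\sum_k e^{-t\nu_k}\leq e^{-tc}\sum_k e^{-t\mu_k}=e^{-tc}\,\Tr(e^{-t(\nabla^t\nabla)^{\mathcal{F}}}).
\]
The heart of the argument is the second step, namely $\Tr(e^{-t(\nabla^t\nabla)^{\mathcal{F}}})\leq m\,\Tr(e^{-t\Delta^{\mathcal{F}}})$. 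Here I would use Kato's inequality $\bigl|\nabla|s|\bigr|\leq|\nabla s|$, valid pointwise on $\reg(X)$, which yields $Q_{\Delta}(|s|)\leq Q_{\nabla^t\nabla}(s)$ together with $\||s|\|=\|s\|$. By the abstract domination criterion for Dirichlet forms (Hess--Schrader--Uhlenbrock / Ouhabaz) this forces the semigroup domination $\bigl|e^{-t(\nabla^t\nabla)^{\mathcal{F}}}s\bigr|\leq e^{-t\Delta^{\mathcal{F}}}|s|$ a.e., and passing to integral kernels one obtains $\|K_{\nabla}(t,x,y)\|_{\op}\leq K_{\Delta}(t,x,y)$ for a.e. $(x,y)$. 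On the diagonal $K_{\nabla}(t,x,x)=\int_{\reg(X)}K_{\nabla}(t/2,x,z)K_{\nabla}(t/2,x,z)^{*}\,\dvol_h(z)$ is a nonnegative self-adjoint endomorphism of $E_x$, so $\tr_{E_x}K_{\nabla}(t,x,x)\leq m\,\|K_{\nabla}(t,x,x)\|_{\op}\leq m\,K_{\Delta}(t,x,x)$; integrating and using that the heat trace equals the integral of the diagonal kernel gives the inequality, which is finite by Th. \ref{Beltrami}. Combining the two steps yields $\Tr(e^{-tP^{\mathcal{F}}})\leq m\,e^{-tc}\,\Tr(e^{-t\Delta^{\mathcal{F}}})<\infty$, so $e^{-tP^{\mathcal{F}}}$ is trace class.

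The main obstacle is the rigorous justification of the semigroup domination, and of the resulting kernel inequality, in the present incomplete and singular setting: Kato's inequality is available only on $\reg(X)$, so one must verify that the minimal (Friedrichs) form domains are compatible with the sublinear map $s\mapsto|s|$ in the way the Ouhabaz criterion demands, and that the heat operators really possess measurable kernels whose diagonal integrates to the operator trace. Within the parabolic framework of the paper these points are expected to be packaged in the auxiliary estimates of \cite{FraBei} (after checking parabolicity and the form bound $L\geq c$). Alternatively, one can fold the potential bound directly into Kato's inequality, deriving $\bigl|e^{-tP^{\mathcal{F}}}s\bigr|\leq e^{-tc}e^{-t\Delta^{\mathcal{F}}}|s|$ in a single domination step and thereby merging the two estimates above.
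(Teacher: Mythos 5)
Your proposal is correct and is in substance the paper's own argument: the paper disposes of this corollary in one line by invoking Th.~\ref{dollar} together with Prop.~4.5 of \cite{FraBei}, and that cited proposition is precisely the semigroup-domination estimate you reconstruct --- Kato's inequality yielding $|e^{-tP^{\mathcal{F}}}s|\leq e^{-tc}e^{-t\Delta^{\mathcal{F}}}|s|$ for the Friedrichs extensions, followed by the fiberwise bound $\tr_{E_x}K_P(t,x,x)\leq m\,e^{-tc}K_{\Delta}(t,x,x)$ and integration over the diagonal. The technical points you flag (validity of the Hess--Schrader--Uhlenbrock criterion for the minimal form domains on the incomplete manifold $\reg(X)$, existence of smooth kernels, and the identification of the trace with the integral of the fiberwise-traced diagonal via the Hilbert--Schmidt norm of $e^{-tP^{\mathcal{F}}/2}$) are exactly what the cited proposition packages, so the only stylistic difference is that you split the estimate into a min-max step for the potential plus a domination step for the Bochner part, whereas the one-step domination you mention at the end is the cleaner route and the one actually taken in \cite{FraBei}.
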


\begin{proof}
This follows by Th. \ref{dollar} and Prop. 4.5 in \cite{FraBei}.
\end{proof}

\begin{cor}
\label{quovadis}
In the setting of Cor. \ref{skernel}. The operator  $P^{\mathcal{F}}:L^2(\reg(X),E)\rightarrow L^2(\reg(X),E)$  has  discrete spectrum.  Moreover, for $t\in (0,1]$, we have the following inequality:
\begin{equation}
\label{zun}
\Tr(e^{-tP^{\mathcal{F}}})\leq m\vol_h(\reg(X))Ce^{-tc}t^{-v}
\end{equation}
where $C$ is the same constant appearing in \eqref{cisiamo}. Let $\{\lambda_k\}$ be the sequence of eigenvalues of  $P^{\mathcal{F}}:L^2(\reg(V),E)\rightarrow L^2(\reg(V),E)$. Then we have  the following asymptotic inequality:
\begin{equation}
\label{waser}
\lim\inf \lambda_kk^{-\frac{1}{v}}>0
\end{equation}
as $k\rightarrow \infty$.
\end{cor}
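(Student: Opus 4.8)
The plan is to obtain all three assertions by assembling facts that have already been proved: the trace-class property and the domination \eqref{marz} from Cor. \ref{skernel}, together with the heat-trace estimate \eqref{cisiamo} for the Laplace-Beltrami operator from Th. \ref{Beltrami}. No new analysis is needed; the argument is a bookkeeping combination of these ingredients followed by a standard Tauberian step, entirely parallel to the proof of Th. \ref{Beltrami}.

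First I would deduce that $P^{\mathcal{F}}$ has discrete spectrum. By Cor. \ref{skernel} the heat operator $e^{-tP^{\mathcal{F}}}$ is trace class, hence compact; since $P^{\mathcal{F}}$ is nonnegative and self-adjoint, the spectral theorem gives that the eigenvalues of $e^{-tP^{\mathcal{F}}}$ are exactly $\{e^{-t\lambda_k}\}$. Compactness forces these to accumulate only at $0$, so $e^{-t\lambda_k}\to 0$, whence $\lambda_k\to +\infty$ with finite multiplicities. This is precisely the reasoning already used for \eqref{fist} in Th. \ref{Beltrami}, so $P^{\mathcal{F}}$ has discrete spectrum.

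Next I would prove the heat-trace bound \eqref{zun}. This is immediate by chaining \eqref{marz} with \eqref{cisiamo}:
\[
\Tr(e^{-tP^{\mathcal{F}}})\leq m e^{-tc}\Tr(e^{-t\Delta^{\mathcal{F}}})\leq m\vol_h(\reg(X))Ce^{-tc}t^{-v}
\]
for $0<t\leq 1$, where $C$ is the constant appearing in \eqref{cisiamo}.

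Finally, for the asymptotic \eqref{waser}, I would feed \eqref{zun} into the classical Tauberian argument of \cite{Tay} page 107, exactly as in Th. \ref{Beltrami}. Writing $\Tr(e^{-tP^{\mathcal{F}}})=\sum_{k}e^{-t\lambda_k}$ and bounding $e^{-tc}\leq\max\{1,e^{-c}\}$ for $t\in(0,1]$, one obtains $\sum_{k}e^{-t\lambda_k}\leq C' t^{-v}$ on $(0,1]$ for a suitable constant $C'$, and the Tauberian theorem then yields $\lim\inf \lambda_k k^{-\frac{1}{v}}>0$ as $k\to\infty$. The one point deserving attention is the factor $e^{-tc}$: when $c<0$ it grows, but since only the small-$t$ behaviour $t\in(0,1]$ enters the Tauberian input, this factor stays bounded and does not alter the power $t^{-v}$. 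Beyond this there is no real obstacle, as every substantive estimate — the Sobolev embedding, the trace-class property, and the domination by the scalar Laplacian — has already been established in the preceding results.
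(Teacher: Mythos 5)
Your proposal is correct and follows exactly the route the paper intends: the paper's proof is simply "This follows immediately by Cor. \ref{skernel} and Th. \ref{Beltrami}," and your write-up is the natural unpacking of that citation (trace-class heat operator implies discrete spectrum, chaining \eqref{marz} with \eqref{cisiamo} gives \eqref{zun}, and the Tauberian step from the proof of Th. \ref{Beltrami} gives \eqref{waser}). Your remark that the factor $e^{-tc}$ stays bounded on $(0,1]$ even when $c<0$ is a worthwhile detail the paper leaves implicit.
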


\begin{proof}
This follows immediately by Cor. \ref{skernel} and  Th. \ref{Beltrami}.
\end{proof}

\begin{cor}
\label{pietrob}
Under the assumptions of Cor. \ref{skernel}.  Then we have the following properties:
\begin{enumerate}
\item $e^{-tP^{\mathcal{F}}}$ is a ultracontractive operator for each $0<t\leq 1$. This means that for each $0<t\leq 1$ there exists a positive constant $C_t>0$  such that   $$\|e^{-tP^{\mathcal{F}}}s\|_{L^{\infty}(\reg(X),E)}\leq C_t\|s\|_{L^1(\reg(X),E)}$$ for each $s\in L^{1}(\reg(X),E)$. In particular, for each $0<t\leq 1$, $e^{-tP^{\mathcal{F}}}:L^{1}(\reg(X),E)\rightarrow L^{\infty}(\reg(X),E)$ is continuous.
\item If $s$ is an eigensection of $P^\mathcal{F}:L^2(\reg(X),E)\rightarrow L^2(\reg(X),E)$ then $s\in L^{\infty}(\reg(X),E)$.
\end{enumerate}
\end{cor}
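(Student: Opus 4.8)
The plan is to obtain both properties from two ingredients that are essentially already at our disposal: the ultracontractivity of the scalar heat semigroup $e^{-t\Delta^{\mathcal{F}}}$, and a pointwise domination of $e^{-tP^{\mathcal{F}}}$ by $e^{-t\Delta^{\mathcal{F}}}$. First I would record the ultracontractivity of $e^{-t\Delta^{\mathcal{F}}}$. Since $\reg(X)$ has real dimension $2v$, the continuous inclusion $W^{1,2}(\reg(X),h)\hookrightarrow L^{\frac{2v}{v-1}}(\reg(X),h)$ from the second point of Th. \ref{dollar} is exactly the critical Sobolev inequality in real dimension $2v$. By the standard equivalence between such a Sobolev inequality and on-diagonal heat-kernel bounds --- the same mechanism that yields \eqref{cisiamo} through Prop. 4.4 in \cite{FraBei} --- the semigroup $e^{-t\Delta^{\mathcal{F}}}$ is ultracontractive and satisfies
$$\|e^{-t\Delta^{\mathcal{F}}}\|_{L^1(\reg(X),h)\to L^{\infty}(\reg(X),h)}\leq Ct^{-v}$$
for $0<t\leq 1$, with $C$ the constant of \eqref{cisiamo}. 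For $0<t\leq 1$ the inhomogeneous $\|f\|^2_{L^2}$ term in the Sobolev inequality contributes only a bounded factor, which may be absorbed into $C$.

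Next I would invoke the pointwise semigroup domination that already underlies Cor. \ref{skernel}, namely the Kato-type inequality
$$|e^{-tP^{\mathcal{F}}}s|_{\sigma}\leq e^{-tc}\,e^{-t\Delta^{\mathcal{F}}}|s|_{\sigma}\qquad\text{a.e. on }\reg(X),$$
valid for every $s\in L^2(\reg(X),E)$. This is precisely where the hypotheses on $P$ enter: the diamagnetic/Kato inequality for the Bochner Laplacian $\nabla^t\circ\nabla$, combined with the lower bound $\sigma(Ls,s)\geq c\,\sigma(s,s)$, produces the factor $e^{-tc}$, and this is the domination encoded in Prop. 4.5 in \cite{FraBei} and already exploited to derive \eqref{marz}. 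Establishing this inequality is the only genuinely technical ingredient; I expect it to be the main obstacle, but granting it (as we may, since it is the content of the cited proposition) the corollary reduces to a short deduction.

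For the first property I would then chain the two estimates. Because $\vol_h(\reg(X))<\infty$ we have $L^2(\reg(X),E)\subset L^1(\reg(X),E)$, and for $s$ in this space
$$\|e^{-tP^{\mathcal{F}}}s\|_{L^{\infty}(\reg(X),E)}\leq e^{-tc}\,\|e^{-t\Delta^{\mathcal{F}}}|s|_{\sigma}\|_{L^{\infty}(\reg(X),h)}\leq e^{-tc}Ct^{-v}\,\|s\|_{L^1(\reg(X),E)},$$
so that $C_t:=e^{-tc}Ct^{-v}$ witnesses ultracontractivity for $0<t\leq 1$, and the continuity of $e^{-tP^{\mathcal{F}}}:L^1(\reg(X),E)\to L^{\infty}(\reg(X),E)$ is immediate. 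For the second property I would apply the first one to an eigensection: if $P^{\mathcal{F}}s=\lambda s$, then $e^{-tP^{\mathcal{F}}}s=e^{-t\lambda}s$, and since $s\in L^2(\reg(X),E)\subset L^1(\reg(X),E)$ the first property gives $e^{-t\lambda}s\in L^{\infty}(\reg(X),E)$, whence $s=e^{t\lambda}\bigl(e^{-tP^{\mathcal{F}}}s\bigr)\in L^{\infty}(\reg(X),E)$, which concludes the argument.
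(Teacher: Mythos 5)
Your argument is correct and follows essentially the same route as the paper, whose proof of this corollary is simply a citation of Cor.~\ref{skernel} together with Prop.~4.6 in \cite{FraBei}; the chain you describe (critical Sobolev inequality in real dimension $2v$ $\Rightarrow$ ultracontractivity of $e^{-t\Delta^{\mathcal{F}}}$ with bound $Ct^{-v}$, Hess--Schrader--Uhlenbrock/Kato domination $|e^{-tP^{\mathcal{F}}}s|_{\sigma}\leq e^{-tc}e^{-t\Delta^{\mathcal{F}}}|s|_{\sigma}$, then the eigensection trick) is precisely the mechanism those cited propositions package. The one ingredient you take on faith, the pointwise semigroup domination, is indeed the content of the cited results rather than something you need to reprove here.
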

\begin{proof}
This follows by Cor. \ref{skernel} and Prop. 4.6 in \cite{FraBei}.
\end{proof}

We conclude this section with the following application.
\begin{cor}
\label{ibracadabra}
In the setting of Th. \ref{dollar}. Let $\phi$ be an eigenfunction of $\Delta^{\mathcal{F}}:L^2(\reg(X),h)\rightarrow L^2(\reg(X),h)$. Then $\phi\in L^{\infty}(\reg(X))$. 
\end{cor}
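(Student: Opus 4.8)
The plan is to recognize the Laplace-Beltrami operator as a particular instance of the Schr\"odinger-type operator studied in Cor. \ref{skernel} and Cor. \ref{pietrob}, and then to read off the desired $L^\infty$-bound from the second assertion of Cor. \ref{pietrob}. First I would take $E$ to be the trivial line bundle over $\reg(X)$, equipped with the standard fiber metric $\sigma$, let $\nabla:=d$ be the de Rham differential acting on functions (which is a metric connection, with formal adjoint $\nabla^t=d^t$), and set the potential $L\equiv 0$. With these choices the Laplace-Beltrami operator factorizes as $\Delta=d^t\circ d=\nabla^t\circ\nabla+L$, so it is exactly a Schr\"odinger-type operator of the form \eqref{polvere}, of rank $m=1$.

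Next I would verify that the two hypotheses of Cor. \ref{skernel} are satisfied. The operator $\Delta$ is symmetric and nonnegative on $C^\infty_c(\reg(X))$, and since $L=0$ the lower-bound condition $\sigma(Ls,s)\geq c\,\sigma(s,s)$ holds with $c=0$, because $\sigma(0,s)=0\geq 0$. As $P:=\nabla^t\circ\nabla+L$ coincides with $\Delta$ on the common core $C^\infty_c(\reg(X))$, their Friedrichs extensions agree, that is $P^{\mathcal{F}}=\Delta^{\mathcal{F}}$. (Here I am implicitly using $v>1$, exactly as in Cor. \ref{skernel} and Cor. \ref{pietrob}, which is harmless since the $L^\infty$-bound we are after is itself a consequence of the Sobolev embedding of Th. \ref{dollar}, valid for $v>1$.)

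Finally I would invoke the second assertion of Cor. \ref{pietrob}, which states that any eigensection of $P^{\mathcal{F}}$ lies in $L^\infty(\reg(X),E)$; specialized to the present trivial-bundle situation this says precisely that the eigenfunction $\phi$ of $\Delta^{\mathcal{F}}$ belongs to $L^\infty(\reg(X))$. I expect no genuine obstacle in this step: all of the real work is already packaged inside Cor. \ref{pietrob}, whose proof rests on the ultracontractivity of the heat semigroup $e^{-tP^{\mathcal{F}}}$. The only point worth emphasizing is that the finiteness of $\vol_h(\reg(X))$ (which holds because $X$ is compact) gives the inclusion $L^2(\reg(X))\hookrightarrow L^1(\reg(X))$, so that writing the eigenfunction relation as $\phi=e^{t\lambda}\,e^{-t\Delta^{\mathcal{F}}}\phi$ and applying the continuity of $e^{-t\Delta^{\mathcal{F}}}:L^1(\reg(X))\rightarrow L^\infty(\reg(X))$ immediately yields $\phi\in L^\infty(\reg(X))$.
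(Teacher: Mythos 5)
Your proposal is correct and follows exactly the route the paper takes: the paper's proof is the one-line observation that the statement follows from Cor. \ref{pietrob}, i.e.\ from viewing $\Delta=d^t\circ d$ as the Schr\"odinger-type operator $\nabla^t\circ\nabla+L$ with $E$ the trivial line bundle and $L\equiv 0$, so that the second assertion of Cor. \ref{pietrob} applies verbatim. Your extra remarks (the lower bound $c=0$, the identification $P^{\mathcal{F}}=\Delta^{\mathcal{F}}$, and the ultracontractivity argument via $\phi=e^{t\lambda}e^{-t\Delta^{\mathcal{F}}}\phi$ together with $L^2\hookrightarrow L^1$ from finite volume) just unpack what is already packaged in Cor. \ref{pietrob} and its source.
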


\begin{proof}
This follows immediately by Cor. \ref{pietrob}.
\end{proof}

\section{The Hodge-Kodaira Laplacian on possibly singular complex surfaces}

In this section we extend to the setting of compact and irreducible Hermitian complex spaces of complex dimension $2$ some of the results proved in \cite{FBei} in the case of complex projective surfaces endowed with the Fubini-Study metric. Since many of the following results follow by the same strategy used in \cite{FBei} we will omit the details in the proofs and we will refer directly to the corresponding results in \cite{FBei}.

\begin{teo}
\label{lillottina}
Let $(X,h)$ be a compact and irreducible Hermitian complex  space of complex dimension $v=2$. For each $q=0,1,2$  we have the following properties:
\begin{enumerate}
\item $\Delta_{\overline{\partial},2,q,\abs}:L^2\Omega^{2,q}(\reg(X),h)\rightarrow L^2\Omega^{2,q}(\reg(X),h)$ has discrete spectrum.
\item $\overline{\partial}_{2,\max}+\overline{\partial}^t_{2,\min}:L^2\Omega^{2,\bullet}(\reg(X),h)\rightarrow L^2\Omega^{2,\bullet}(\reg(X),h)$ has discrete spectrum.
\item $\Delta_{\overline{\partial},2,q}^{\mathcal{F}}:L^2\Omega^{2,q}(\reg(X),h)\rightarrow L^2\Omega^{2,q}(\reg(X),h)$ has discrete spectrum.
\end{enumerate}
\end{teo}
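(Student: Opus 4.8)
The plan is to reduce all three assertions to a single compactness statement and then to prove that statement by combining a local Bochner--Kodaira--Nakano identity with the Rellich-type compactness already furnished by Cor.~\ref{pino}.

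First I would organize the reduction. By the Hilbert complex formalism of Br\"uning--Lesch, the operator $\overline{\partial}_{2,\max}+\overline{\partial}^t_{2,\min}$ is the Dirac operator of the maximal Dolbeault complex in the row $p=2$, and its square is $\bigoplus_{q}\Delta_{\overline{\partial},2,q,\abs}$; hence (2) is equivalent to (1) holding for all $q$, and it suffices to treat (1). Assertion (3) also follows from (1): the form domain of the Friedrichs extension $\Delta^{\mathcal{F}}_{\overline{\partial},2,q}$ is the closure of $\Omega^{2,q}_c(\reg(X))$ in the graph norm of $(\overline{\partial},\overline{\partial}^t)$, hence is contained in $\mathcal{D}(\overline{\partial}_{2,q,\min})\cap\mathcal{D}(\overline{\partial}^t_{2,q-1,\min})\subseteq\mathcal{D}(\overline{\partial}_{2,q,\max})\cap\mathcal{D}(\overline{\partial}^t_{2,q-1,\min})$, the form domain of $\Delta_{\overline{\partial},2,q,\abs}$; a continuous inclusion of form domains transfers compactness into $L^2$ from the larger to the smaller. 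Finally, the case $q=2$ of (1) is immediate: by Prop.~\ref{occhiodibue} the conjugate-linear isometry $*\circ c$ intertwines $\Delta_{\overline{\partial},2,2,\abs}$ with $\Delta_{\overline{\partial},0,0,\rel}=\overline{\partial}^t_{\max}\circ\overline{\partial}_{\min}$, which is the operator $\Delta^{\mathcal{F}}_{\overline{\partial}}$ shown to have discrete spectrum in Th.~\ref{hklaplacian}. So the real work is to prove, for $q=0,1$, that the inclusion $\mathcal{D}(\overline{\partial}_{2,q,\max})\cap\mathcal{D}(\overline{\partial}^t_{2,q-1,\min})\hookrightarrow L^2\Omega^{2,q}(\reg(X),h)$ is compact.

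The main line is to factor this inclusion through the Sobolev space $W^{1,2}(\reg(X),\Lambda^{2,q})$. The Rellich half is free: since $v=2>1$, Cor.~\ref{pino} gives a compact inclusion $W^{1,2}(\reg(X),\Lambda^{2,q})\hookrightarrow L^2\Omega^{2,q}(\reg(X),h)$ for any choice of metric connection on $\Lambda^{2,q}$. It therefore remains to establish the G\aa rding-type continuous inclusion $\mathcal{D}(\overline{\partial}_{2,q,\max})\cap\mathcal{D}(\overline{\partial}^t_{2,q-1,\min})\hookrightarrow W^{1,2}(\reg(X),\Lambda^{2,q})$, that is an estimate $\|\nabla\omega\|^2_{L^2}\leq C(\|\overline{\partial}\omega\|^2_{L^2}+\|\overline{\partial}^t\omega\|^2_{L^2}+\|\omega\|^2_{L^2})$. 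To prove it I would proceed exactly as in Prop.~\ref{estimate} and Th.~\ref{dollar}: fix the finite cover $\{V_i\}$ together with the partition of unity from \eqref{goodpart}--\eqref{mcvf}, on each $V_i$ replace $h$ by a quasi-isometric K\"ahler metric $h_i$ (the pullback of the smooth ambient metric), and invoke the Bochner--Kodaira--Nakano identity on $(\reg(V_i),h_i)$ to bound $\|\nabla^{h_i}\omega\|^2$ by the $\overline{\partial}$- and $\overline{\partial}^t$-energies plus a curvature term; the curvature term is controlled because $\overline{V_i}$ sits in the fixed smooth ambient polydisc. Patching the local estimates with the partition of unity then yields the global inequality on $\Omega^{2,q}_c(\reg(X))$. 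For the degree $(2,0)$ one can alternatively read $(2,0)$-forms as sections of the canonical bundle and quote Cor.~\ref{attore} directly.

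The hard part is twofold, and it is precisely what forces us to borrow the techniques of \cite{FBei}. First, Bochner--Kodaira--Nakano is a K\"ahler identity, so it must be run for $h_i$ and then transported to $h$ across the quasi-isometry; because $\overline{\partial}^t$ and $\nabla$ are metric dependent (only $\overline{\partial}$ is not), this transport is delicate and has to be arranged so that one compares only equivalent $L^2$ norms and the metric-independent operator $\overline{\partial}$ between the two metrics. Second, and more seriously, the G\aa rding inequality is obtained on compactly supported forms and must be promoted to the full maximal/form domain, i.e. to forms that near $\sing(X)$ are merely $L^2$ with $L^2$ derivatives. This is exactly where the parabolicity of $(\reg(X),h)$ intervenes: by Cor.~\ref{pino} one has $W^{1,2}=W^{1,2}_0$, so the singular set carries no capacity and the closure of $\Omega^{2,q}_c$ fills the form domain at the $W^{1,2}$ level with no boundary contribution from $\sing(X)$. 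Granting these two points, the factorization is complete, the form-domain inclusion into $L^2$ is compact, $\Delta_{\overline{\partial},2,q,\abs}$ has discrete spectrum for every $q$, and assertions (2) and (3) follow by the reduction above.
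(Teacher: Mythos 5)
Your reductions are sound and partly coincide with the paper: the case $q=2$ is handled exactly as in the paper's proof (via $*\circ c$ and Th. \ref{hklaplacian}), and deducing (2) and (3) from (1) is unproblematic. The core of your argument for $q=0$ and $q=1$, however, has two genuine gaps. First, the claim that ``the curvature term is controlled because $\overline{V_i}$ sits in the fixed smooth ambient polydisc'' is false: the Bochner--Kodaira--Nakano/Weitzenb\"ock remainder involves the \emph{intrinsic} curvature of $(\reg(V_i),h_i)$, and by the Gauss equation this differs from the ambient curvature by second-fundamental-form terms, which are generally unbounded as one approaches $\sing(X)$ (already for a quadric cone in $\mathbb{C}^3$ the induced metric has unbounded curvature at the vertex). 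So the local G\aa rding inequality with a uniform constant is not available, and the quasi-isometry transport does not rescue it, since $\overline{\partial}^{t}$ taken with respect to $h$ and with respect to $h_i$ differ by a zeroth-order term built from first derivatives of the metrics, which quasi-isometry does not control.

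Second, even granting the G\aa rding inequality on $\Omega^{2,q}_c(\reg(X))$, it would extend by density only to the \emph{minimal} domains, whereas the form domain of $\Delta_{\overline{\partial},2,q,\abs}$ is $\mathcal{D}(\overline{\partial}_{2,q,\max})\cap\mathcal{D}(\overline{\partial}^t_{2,q-1,\min})$. Your appeal to parabolicity and $W^{1,2}=W^{1,2}_0$ concerns functions and sections relative to a metric connection (Th. \ref{dollar}, Cor. \ref{pino}); it does not give $\overline{\partial}_{2,q,\max}=\overline{\partial}_{2,q,\min}$ on forms of intermediate degree, and indeed the paper's careful distinction between absolute and relative extensions would be vacuous if it did. (Your fallback via Cor. \ref{attore} has the same defect: that corollary treats $\overline{\partial}_{E,\min}$, while $\Delta_{\overline{\partial},2,0,\abs}=\overline{\partial}^t_{2,0,\min}\circ\overline{\partial}_{2,0,\max}$ requires the maximal extension.) The paper avoids both obstacles by a different route: $q=0$ is quoted from Th. 5.1 of \cite{FBei}, a dedicated result exploiting the metric-independence of $L^2$-norms on $(v,0)$-forms; and $q=1$ is obtained by first showing $\Delta_{\overline{\partial},2,1,\abs}$ is Fredholm (finite-dimensionality of $H^{2,q}_{2,\overline{\partial}_{\max}}$ from \cite{JRu} together with \cite{BL} Th. 2.4), which reduces the compactness of the domain inclusion to its restriction to $\im(\Delta_{\overline{\partial},2,1,\abs})$, where a decomposition argument transfers the problem to the adjacent degrees $q=0,2$ already settled. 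You would need to either import those cohomological finiteness results or find a genuinely new proof of the G\aa rding estimate on the maximal domain; as written, the middle degree is not established.
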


\begin{proof}
We start by  considering  the operator  $\Delta_{\overline{\partial},2,0,\abs}:L^2\Omega^{2,0}(\reg(V),h)\rightarrow L^2\Omega^{2,0}(\reg(V),h)$. In this case the statement is  a particular case of  Th. 5.1 in \cite{FBei}. Now we deal with $\Delta_{\overline{\partial},2,2,\abs}:L^2\Omega^{2,2}(\reg(X),h)\rightarrow L^2\Omega^{2,2}(\reg(X),h)$.  By Prop. \ref{occhiodibue} we have $*(c(\mathcal{D}(\Delta_{\overline{\partial},2,2,\abs})))=\mathcal{D}(\Delta_{\overline{\partial},\rel})$ and  $*\circ c\circ \Delta_{\overline{\partial},2,2,\abs}=\Delta_{\overline{\partial},\rel}\circ *\circ c.$
We are therefore left to prove that $\Delta_{\overline{\partial},\rel}:L^2(\reg(X),h)\rightarrow L^2(\reg(X),h)$ has discrete spectrum. This  follows by Th. \ref{hklaplacian} because when $(p,q)=(0,0)$ we have $\Delta_{\overline{\partial},\rel}=\overline{\partial}_{\max}^t\circ \overline{\partial}_{\min}=\Delta_{\overline{\partial}}^{\mathcal{F}}$. It remains to show that $\Delta_{\overline{\partial},2,1,\abs}:L^2\Omega^{2,1}(\reg(X),h)\rightarrow L^2\Omega^{2,1}(\reg(X),h)$ has discrete spectrum. This in turn is equivalent to showing that the inclusion
\begin{equation}
\label{compactww}
\mathcal{D}(\Delta_{\overline{\partial},2,1,  \abs})\hookrightarrow L^2\Omega^{2,1}(\reg(X),h)
\end{equation}
 is a compact operator where $\mathcal{D}(\Delta_{\overline{\partial},2,1,\abs})$ is endowed with the corresponding graph norm. According to \cite{JRu}  we know that $H^{2,q}_{2,\overline{\partial}_{\max}}(\reg(X),h)$ is finite dimensional for each $q$. Therefore, using  \cite{BL} Th. 2.4, we can conclude that, for each $q=0,...,2$, $\im(\overline{\partial}_{2,q,\max})$ is closed  and that $\Delta_{\overline{\partial},2,q,  \abs}:L^2\Omega^{2,q}(\reg(X),h)\rightarrow L^2\Omega^{2,q}(\reg(X),h)$ is a Fredholm operator on its domain endowed with the graph norm. Hence, by the fact that  $\Delta_{\overline{\partial},2,1, \abs}:L^2\Omega^{2,1}(\reg(X),h)\rightarrow L^2\Omega^{2,1}(\reg(X),h)$ is Fredholm and self-adjoint, we know now that \eqref{compactww} is a compact operator if and only if the following inclusion is a compact operator  
\begin{equation}
\label{scompactIIIq}
\left(\mathcal{D}(\Delta_{\overline{\partial},2,1, \abs})\cap \im(\Delta_{\overline{\partial},2,1,\abs})\right)\hookrightarrow L^2\Omega^{2,1}(\reg(X),h)
\end{equation}
 where  $\left(\mathcal{D}(\Delta_{\overline{\partial},2,1,\abs})\cap \im(\Delta_{\overline{\partial},2,1,\abs})\right)$ is endowed with the  graph norm of $\Delta_{\overline{\partial},2,1, \abs}$. Now the conclusion follows by arguing as in the proof of the first point of Th. 5.3 in \cite{FBei}. Finally the second and the third point of this theorem follow by using the same arguments used to show the second and the third point  of Th. 5.2 in \cite{FBei}.
\end{proof}

\begin{teo}
\label{coccabelladezio}
In the  setting of Th. \ref{lillottina}. Let $q\in \{0,1,2\}$ and consider the operator
\begin{equation}
\label{resile}
\Delta_{\overline{\partial},2,q,\abs}:L^2\Omega^{2,q}(\reg(X),h)\rightarrow L^2\Omega^{2,q}(\reg(X),h).
\end{equation}
Let $$0\leq \lambda_1\leq \lambda_2\leq...\leq \lambda_k\leq...$$ be the eigenvalues of \eqref{resile}. Then we have the following asymptotic inequality
\begin{equation}
\label{resiles}
\lim \inf \lambda_k k^{-\frac{1}{2}}>0
\end{equation}
as $k\rightarrow \infty$.\\ Consider now the heat operator associated to \eqref{resile}
\begin{equation}
\label{resilex}
e^{-t\Delta_{\overline{\partial},2,q,\abs}}:L^2\Omega^{2,q}(\reg(X),h)\rightarrow L^2\Omega^{2,q}(\reg(X),h).
\end{equation}
Then \eqref{resilex} is a trace class operator and its trace satisfies the following estimate
\begin{equation}
\label{resilez}
\Tr(e^{-t\Delta_{\overline{\partial},2,q,\abs}})\leq C_qt^{-2}
\end{equation}
for $t\in (0,1]$ and some constant $C_q>0$.
\end{teo}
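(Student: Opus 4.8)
The plan is to reduce everything to heat-trace bounds: once we have shown $\Tr(e^{-t\Delta_{\overline{\partial},2,q,\abs}})\leq C_q t^{-2}$ for $t\in(0,1]$, the asymptotic inequality \eqref{resiles} follows by the same Tauberian argument already used in Th. \ref{Beltrami} — from $\sum_k e^{-t\lambda_k}\leq C_q t^{-2}$ one bounds the counting function $N(\lambda)\leq C\lambda^2$ by optimizing over $t$, and this forces $\liminf_k\lambda_k k^{-1/2}>0$. Discrete spectrum and the trace-class property are already granted by Th. \ref{lillottina}, so the whole content is the heat-trace estimate. I would treat the three degrees $q=0,2,1$ separately, the degrees $0$ and $2$ serving as building blocks and $q=1$ being deduced from them by Hodge theory.

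For $q=2$ I would use the symmetry already exploited in the proof of Th. \ref{lillottina}. By Prop. \ref{occhiodibue} (sixth point, with $(p,q)=(2,2)$ and $m=2$) the antilinear isometry $*\circ c$ intertwines $\Delta_{\overline{\partial},2,2,\abs}$ with $\Delta_{\overline{\partial},0,0,\rel}=\overline{\partial}^t_{\max}\circ\overline{\partial}_{\min}=\Delta_{\overline{\partial}}^{\mathcal{F}}$. Since a unitary (here antiunitary) equivalence preserves the spectrum with multiplicities, the two heat operators have the same trace, so $\Tr(e^{-t\Delta_{\overline{\partial},2,2,\abs}})=\Tr(e^{-t\Delta_{\overline{\partial}}^{\mathcal{F}}})$ and \eqref{agamennone} of Th. \ref{spechk} (with $v=2$) gives directly $C_2=r\vol_h(\reg(X))$.

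The degree $q=0$ is the hard part and the main obstacle. Here $\Lambda^{2,0}(\reg(X))$ is the canonical bundle and $\Delta_{\overline{\partial},2,0,\abs}=\overline{\partial}^t_{2,0,\min}\circ\overline{\partial}_{2,0,\max}$; no symmetry reduces it to a scalar operator, so I would follow the strategy of Th. 5.1 in \cite{FBei}. The idea is to prove a basic estimate for $(2,0)$-forms analogous to Prop. \ref{estimate}, controlling a full covariant $W^{1,2}$-norm of a compactly supported $(2,0)$-form by $\|s\|^2+\|\overline{\partial}_{2,0}s\|^2$, via the local quasi-isometry to a K\"ahler metric together with the Bochner--Kodaira identity on the canonical bundle. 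Combined with the Sobolev embedding for bundle-valued forms (Cor. \ref{pino}) and the heat-trace machinery (Prop. 4.4 in \cite{FraBei}, exactly as in Th. \ref{Beltrami}), this would yield $\Tr(e^{-t\Delta_{\overline{\partial},2,0,\abs}})\leq C_0 t^{-2}$. The delicate point, and where I expect the real work to lie, is making such a basic estimate survive near $\sing(X)$, where the curvature of $h$ is in general unbounded; this is precisely the place where the argument of \cite{FBei} is required.

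Finally $q=1$ follows from the two previous cases by the supersymmetry of the Dolbeault Hilbert complex. Since $H^{2,q}_{2,\overline{\partial}_{\max}}(\reg(X),h)$ is finite dimensional, the images $\im(\overline{\partial}_{2,q,\max})$ are closed (Th. 2.4 in \cite{BL}) and we have the weak Hodge--Kodaira decomposition $L^2\Omega^{2,1}(\reg(X),h)=\mathcal{H}^{2,1}\oplus\overline{\im(\overline{\partial}_{2,0,\max})}\oplus\overline{\im(\overline{\partial}^t_{2,1,\min})}$. The map $\overline{\partial}_{2,0,\max}$ intertwines the restriction of $\Delta_{\overline{\partial},2,0,\abs}$ to $\overline{\im(\overline{\partial}^t_{2,0,\min})}$ with the restriction of $\Delta_{\overline{\partial},2,1,\abs}$ to $\overline{\im(\overline{\partial}_{2,0,\max})}$, and likewise $\overline{\partial}^t_{2,1,\min}$ matches the nonzero spectrum on $\overline{\im(\overline{\partial}^t_{2,1,\min})}$ with that of $\Delta_{\overline{\partial},2,2,\abs}$; since on $(2,0)$ and $(2,2)$ the complementary arrows vanish, the whole nonzero spectrum of $\Delta_{\overline{\partial},2,0,\abs}$ and of $\Delta_{\overline{\partial},2,2,\abs}$ is accounted for, giving a bijection, counted with multiplicities, between the nonzero eigenvalues of $\Delta_{\overline{\partial},2,1,\abs}$ and the disjoint union of those of $\Delta_{\overline{\partial},2,0,\abs}$ and $\Delta_{\overline{\partial},2,2,\abs}$. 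Hence $\Tr(e^{-t\Delta_{\overline{\partial},2,1,\abs}})=\dim\mathcal{H}^{2,1}+(\Tr(e^{-t\Delta_{\overline{\partial},2,0,\abs}})-\dim\mathcal{H}^{2,0})+(\Tr(e^{-t\Delta_{\overline{\partial},2,2,\abs}})-\dim\mathcal{H}^{2,2})\leq(C_0+C_2+\dim\mathcal{H}^{2,1})t^{-2}$ on $(0,1]$, using $t^{-2}\geq1$ there. This furnishes $C_1$ and, with the Tauberian step applied in each degree, completes the proof.
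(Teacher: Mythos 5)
Your proposal is correct and follows essentially the same route as the paper, which handles $q=2$ via the anti-unitary $*\circ c$ conjugation to $\Delta_{\overline{\partial}}^{\mathcal{F}}$ and Th.~\ref{spechk}, and simply cites Th.~5.1 and Th.~5.4 of \cite{FBei} for $q=0$ and $q=1$ respectively. The supersymmetry/Hodge-decomposition argument you spell out for $q=1$ (legitimate here since the closedness of the ranges and finite-dimensionality of the cohomology are already secured in the proof of Th.~\ref{lillottina}) is precisely the content of the cited result, so you have only made explicit what the paper leaves to the reference.
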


\begin{proof}
The case $q=0$  follows by Th. 5.1 in \cite{FBei}. Consider now the case $q=2$. Then, as pointed out in the proof of Th. \ref{lillottina}, we have  $*\circ c \circ \Delta_{\overline{\partial},2,2, \abs}=\Delta^{\mathcal{F}}_{\overline{\partial}}\circ *\circ c$.  Therefore the case $q=2$ follows by Th. \ref{spechk}. Finally the case $q=1$ follows by using the same arguments used in the proof of Th. 5.4 in \cite{FBei}.
\end{proof}

As a consequence of the previous theorem we retrieve the McKean-Singer formula for the $L^2$-$\overline{\partial}$-complex $(L^2\Omega^{2,q}(\reg(X),h),\overline{\partial}_{2,q,\max})$ in the setting of compact and irreducible Hermitian complex spaces of complex dimension $2$. Consider the operator
\begin{equation}
\label{isabella}
\overline{\partial}_{2,0,\max}+\overline{\partial}^t_{2,1,\min}:L^2\Omega^{2,0}(\reg(X),h)\oplus L^2\Omega^{2,2,}(\reg(X),h)\rightarrow L^2\Omega^{2,1}(\reg(X),h)
\end{equation}
 whose domain is $\mathcal{D}(\overline{\partial}_{2,0,\max})\oplus \mathcal{D}(\overline{\partial}_{2,1,\min}^t)\subset L^2\Omega^{2,0}(\reg(X),h)\oplus L^2\Omega^{2,2,}(\reg(X),h)$. Its adjoint is 
\begin{equation}
\label{rege}
\overline{\partial}_{2,1,\max}+\overline{\partial}^t_{2,0,\min}:L^2\Omega^{2,1}(\reg(X),h)\rightarrow L^2\Omega^{2,0}(\reg(X),h) \oplus L^2\Omega^{2,2,}(\reg(X),h)
\end{equation}
with domain given by   $\mathcal{D}(\overline{\partial}_{2,1,\max})\cap \mathcal{D}(\overline{\partial}_{2,0,\min}^t)\subset L^2\Omega^{2,1}(\reg(X),h)$. In order  to state the next results we also need to recall briefly the existence of a resolution  of singularities. This is a deep topic in complex analytic geometry and we refer to the celebrated work of Hironaka \cite{Hiro},  to \cite{BiMi} and to \cite{HH} for a thorough discussion  on this subject. Furthermore we refer to \cite{GMMI} and to \cite{MaMa}  for a quick introduction. Below we simply recall what is strictly necessary for our purposes.\\ Let $X$ be a compact irreducible complex space. Then there exists a compact complex manifold $M$, a divisor with only normal crossings $D\subset M$ and a surjective holomorphic map $\pi:M\rightarrow X$ such that $\pi^{-1}(\sing(X))=D$ and 
\begin{equation}
\label{hiro}
\pi|_{M\setminus D}: M\setminus D\longrightarrow X\setminus \sing(X)
\end{equation}
is a biholomorphism. We have now all the ingredients to state the next corollary.

\begin{cor}
\label{kean}
In the setting of Th. \ref{coccabelladezio}. Let us label by $(\overline{\partial}_{2,\max}+\overline{\partial}_{2,\min}^t)^+$ the operator defined in \eqref{isabella}. Then $(\overline{\partial}_{2,\max}+\overline{\partial}_{2,\min}^t)^+$ is a Fredholm operator on its domain endowed with the graph norm and its index satisfies
\begin{equation}
\label{jilm}
\ind((\overline{\partial}_{2,\max}+\overline{\partial}_{2,\min}^t)^+)=\sum_{q=0}^2(-1)^q\Tr(e^{-t\Delta_{\overline{\partial},2,q,\abs}}).
\end{equation}
In particular we have 
\begin{equation}
\label{jilmx}
\chi(M,\mathcal{K}_{M})=\sum_{q=0}^2(-1)^q\Tr(e^{-t\Delta_{\overline{\partial},2,q,\abs}})
\end{equation}
where $\pi:M\rightarrow X$ is any resolution of $X$, $\mathcal{K}_{M}$ is the sheaf of holomorphic $(2,0)$-forms on $M$ and $\chi(M,\mathcal{K}_{M})=\sum_{q=0}^2(-1)^q\dim(H^q(M,\mathcal{K}_{M}))$.
\end{cor}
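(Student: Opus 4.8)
The plan is to obtain \eqref{jilm} as an instance of the McKean--Singer formula for the Fredholm Hilbert complex furnished by the maximal Dolbeault complex, and then to convert the resulting $L^2$-cohomological Euler characteristic into $\chi(M,\mathcal{K}_M)$ by means of the resolution $\pi$.

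First I would record the Fredholm property. The proof of Th. \ref{lillottina} already uses (via \cite{JRu}) that $H^{2,q}_{2,\overline{\partial}_{\max}}(\reg(X),h)$ is finite dimensional for every $q$, so that the maximal Dolbeault complex $(L^2\Omega^{2,\bullet}(\reg(X),h),\overline{\partial}_{2,\bullet,\max})$ is a Fredholm complex in the sense of Br\"uning--Lesch \cite{BL}. Its associated rolled-up Dirac operator $\overline{\partial}_{2,\max}+\overline{\partial}^t_{2,\min}$ is therefore Fredholm on its domain endowed with the graph norm, and so is its restriction to the even-degree part, which is precisely $(\overline{\partial}_{2,\max}+\overline{\partial}_{2,\min}^t)^+$ from \eqref{isabella}.

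Next I would run the standard supersymmetry argument. Splitting $L^2\Omega^{2,\bullet}(\reg(X),h)$ by the parity of $q$, the even part $L^2\Omega^{2,0}(\reg(X),h)\oplus L^2\Omega^{2,2}(\reg(X),h)$ carries the Laplacian $\Delta_{\overline{\partial},2,0,\abs}\oplus\Delta_{\overline{\partial},2,2,\abs}$, while the odd part $L^2\Omega^{2,1}(\reg(X),h)$ carries $\Delta_{\overline{\partial},2,1,\abs}$. The Dirac operator restricts to a bijection between the nonzero eigenspaces of these two Laplacians, so all positive eigenvalues cancel in the alternating sum of traces. Since each $e^{-t\Delta_{\overline{\partial},2,q,\abs}}$ is trace class by Th. \ref{coccabelladezio}, the sum $\sum_{q=0}^2(-1)^q\Tr(e^{-t\Delta_{\overline{\partial},2,q,\abs}})$ is finite, independent of $t$, and equal to
$$\dim\ker(\Delta_{\overline{\partial},2,0,\abs})+\dim\ker(\Delta_{\overline{\partial},2,2,\abs})-\dim\ker(\Delta_{\overline{\partial},2,1,\abs})=\ind((\overline{\partial}_{2,\max}+\overline{\partial}_{2,\min}^t)^+),$$
which is exactly \eqref{jilm}.

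Finally, for \eqref{jilmx}, Hodge theory for the Fredholm complex (again \cite{BL}) identifies this index with $\sum_{q=0}^2(-1)^q\dim H^{2,q}_{2,\overline{\partial}_{\max}}(\reg(X),h)$, after which it suffices to invoke the isomorphism $H^{2,q}_{2,\overline{\partial}_{\max}}(\reg(X),h)\cong H^q(M,\mathcal{K}_M)$ for the resolution $\pi:M\rightarrow X$. The main obstacle is this last identification: whereas the preceding steps are formal consequences of the discreteness and trace-class statements already established, matching the $L^2$-$\overline{\partial}_{\max}$-cohomology in top holomorphic bidegree with the sheaf cohomology of the canonical bundle of $M$ rests on an $L^2$-extension result across the exceptional divisor $D=\pi^{-1}(\sing(X))$, of Grauert--Riemenschneider type, which is where the geometry of the resolution genuinely enters; in the surface case this is precisely the content that one imports from \cite{FBei}.
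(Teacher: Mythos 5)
Your argument is correct and follows essentially the same route as the paper: Fredholmness from the finite-dimensionality of $H^{2,q}_{2,\overline{\partial}_{\max}}(\reg(X),h)$ (equivalently, from the discreteness in Th. \ref{lillottina}), the standard McKean--Singer cancellation of the nonzero eigenvalues to obtain \eqref{jilm} (the paper delegates exactly this computation to the proof of Cor. 5.3 in \cite{FBei}), and the identification $H^{2,q}_{2,\overline{\partial}_{\max}}(\reg(X),h)\cong H^q(M,\mathcal{K}_M)$ to pass to \eqref{jilmx}. The only slip is attributional: that last isomorphism is imported from \cite{JRu} (Ruppenthal's $L^2$-theory for the $\overline{\partial}$-operator on compact complex spaces), not from \cite{FBei}.
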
 

\begin{proof}
That $(\overline{\partial}_{2,\max}+\overline{\partial}_{2,\min}^t)^+$ is a Fredholm operator  is clear from Th. \ref{lillottina}. The equality \eqref{jilm} follows by the fact that  $$\ind((\overline{\partial}_{2,\max}+\overline{\partial}_{2,\min}^t)^+)=\sum_{q=0}^2(-1)^q\ker(\Delta_{\overline{\partial},2,q,\abs})=\sum_{q=0}^2(-1)^q\Tr(e^{-t\Delta_{\overline{\partial},2,q,\abs}})$$ see for instance the proof of Cor. 5.3 in \cite{FBei} for the details.  The equality \eqref{jilmx} follows by \eqref{jilm} and the results established in \cite{JRu}.
\end{proof}

\begin{teo}
\label{coccabelladezios}
In the  setting of Th. \ref{coccabelladezio}. Let $q\in \{0,1,2\}$ and consider the operator
\begin{equation}
\label{resileq}
\Delta_{\overline{\partial},2,q,}^{\mathcal{F}}:L^2\Omega^{2,q}(\reg(X),h)\rightarrow L^2\Omega^{2,q}(\reg(X),h)
\end{equation}
that is the Friedrichs extension of $\Delta_{\overline{\partial},2,q}:\Omega^{2,q}_c(\reg(X))\rightarrow \Omega^{2,q}_c(\reg(X))$.
Let  $$0\leq \mu_1\leq \mu_2\leq...\leq \mu_k\leq...$$ be the eigenvalues of \eqref{resileq} and 
let $$0\leq \lambda_1\leq \lambda_2\leq...\leq \lambda_k\leq...$$ be the eigenvalues of \eqref{resile}. Then we have the following  inequality for every $k\in \mathbb{N}$
\begin{equation}
\label{compare}
\lambda_{k}\leq \mu_k.
\end{equation}
In particular we have 
\begin{equation}
\label{resileh}
\lim \inf \mu_k k^{-\frac{1}{2}}>0
\end{equation}
as $k\rightarrow \infty$.\\ Consider now the heat operator associated to \eqref{resileq}
\begin{equation}
\label{resiley}
e^{-t\Delta_{\overline{\partial},2,q}^{\mathcal{F}}}:L^2\Omega^{2,q}(\reg(X),h)\rightarrow L^2\Omega^{2,q}(\reg(X),h).
\end{equation}
Then \eqref{resiley} is a trace class operator and 
\begin{equation}
\label{pololo}
\Tr(e^{-t\Delta_{\overline{\partial},2,q}^{\mathcal{F}}})\leq \Tr(e^{-t\Delta_{\overline{\partial},2,q,\abs}}).
\end{equation}
In particular we have the following estimate for  $\Tr(e^{-t\Delta_{\overline{\partial},2,q}^{\mathcal{F}}})$
\begin{equation}
\label{resilezk}
\Tr(e^{-t\Delta_{\overline{\partial},2,q}^{\mathcal{F}}})\leq  B_qt^{-2}
\end{equation}
for $t\in (0,1]$ and some constant $B_q>0$.
\end{teo}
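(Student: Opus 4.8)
The plan is to compare \eqref{resileq} and \eqref{resile} at the level of their quadratic forms and then to invoke the min-max theorem, in exactly the spirit of the proof of Th.~\ref{spechk}. Recall first that, by its very definition, the Friedrichs extension $\Delta^{\mathcal{F}}_{\overline{\partial},2,q}$ is the self-adjoint operator associated with the closure of the quadratic form
\[
Q^{\mathcal{F}}(\omega)=\langle \Delta_{\overline{\partial},2,q}\omega,\omega\rangle=\|\overline{\partial}_{2,q}\omega\|^2+\|\overline{\partial}^t_{2,q-1}\omega\|^2
\]
(where, as usual, a term whose bidegree falls outside the range $0,\dots,2$ is understood to be zero), defined initially for $\omega\in \Omega^{2,q}_c(\reg(X))$. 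Hence its form domain $\mathcal{Q}^{\mathcal{F}}$ is the closure of $\Omega^{2,q}_c(\reg(X))$ with respect to the norm $(\|\omega\|^2+\|\overline{\partial}_{2,q}\omega\|^2+\|\overline{\partial}^t_{2,q-1}\omega\|^2)^{1/2}$. On the other hand, by the definitions recalled in Section~1, the absolute extension $\Delta_{\overline{\partial},2,q,\abs}$ is the self-adjoint operator associated with the form $Q^{\abs}(\omega)=\|\overline{\partial}_{2,q,\max}\omega\|^2+\|\overline{\partial}^t_{2,q-1,\min}\omega\|^2$ whose domain is $\mathcal{Q}^{\abs}:=\mathcal{D}(\overline{\partial}_{2,q,\max})\cap \mathcal{D}(\overline{\partial}^t_{2,q-1,\min})$.

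The only genuinely structural point is the following: one has the inclusion $\mathcal{Q}^{\mathcal{F}}\subseteq \mathcal{Q}^{\abs}$ together with the identity $Q^{\mathcal{F}}=Q^{\abs}$ on $\mathcal{Q}^{\mathcal{F}}$. Indeed, $\mathcal{Q}^{\abs}$ is complete with respect to the form norm above and contains $\Omega^{2,q}_c(\reg(X))$, so it contains the closure $\mathcal{Q}^{\mathcal{F}}$ as well; moreover every $\omega\in \mathcal{Q}^{\mathcal{F}}$ lies in $\mathcal{D}(\overline{\partial}_{2,q,\min})\cap\mathcal{D}(\overline{\partial}^t_{2,q-1,\min})$, and there $\overline{\partial}_{2,q,\min}$ agrees with $\overline{\partial}_{2,q,\max}$, so the two forms coincide on $\mathcal{Q}^{\mathcal{F}}$ by continuity from $\Omega^{2,q}_c(\reg(X))$.

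Granting this, I would run the min-max characterization as in Th.~\ref{spechk}: writing $\mathfrak{F}_k$ for the family of subspaces of $L^2\Omega^{2,q}(\reg(X),h)$ of dimension at most $k$, one has
\[
\mu_k=\inf_{F\in \mathfrak{F}_k\cap \mathcal{Q}^{\mathcal{F}}}\ \sup_{0\neq \omega\in F}\frac{Q^{\mathcal{F}}(\omega)}{\|\omega\|^2_{L^2\Omega^{2,q}(\reg(X),h)}},\qquad \lambda_k=\inf_{F\in \mathfrak{F}_k\cap \mathcal{Q}^{\abs}}\ \sup_{0\neq \omega\in F}\frac{Q^{\abs}(\omega)}{\|\omega\|^2_{L^2\Omega^{2,q}(\reg(X),h)}}.
\]
Since every $F$ competing in the infimum defining $\mu_k$ also competes in the infimum defining $\lambda_k$, and since the Rayleigh quotients agree on any such $F$ by the previous paragraph, the infimum for $\lambda_k$ runs over a larger family with the same values; hence $\lambda_k\leq \mu_k$, which is \eqref{compare}. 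In particular, because $\lambda_k\to\infty$ by Th.~\ref{lillottina} and Th.~\ref{coccabelladezio}, the inequality forces $\mu_k\to\infty$, so \eqref{resileq} has discrete spectrum and the $\mu_k$ are genuine eigenvalues.

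The remaining assertions are then immediate. From \eqref{compare} and \eqref{resiles} one obtains $\liminf \mu_k k^{-1/2}\geq \liminf \lambda_k k^{-1/2}>0$, which is \eqref{resileh}. Moreover $\mu_k\geq \lambda_k$ gives $e^{-t\mu_k}\leq e^{-t\lambda_k}$ for every $k$, so summing over $k$ and using that \eqref{resilex} is trace class (Th.~\ref{coccabelladezio}) shows that \eqref{resiley} is trace class and that $\Tr(e^{-t\Delta^{\mathcal{F}}_{\overline{\partial},2,q}})\leq \Tr(e^{-t\Delta_{\overline{\partial},2,q,\abs}})$, i.e. \eqref{pololo}; combining \eqref{pololo} with \eqref{resilez} then yields \eqref{resilezk} with $B_q=C_q$. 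The main obstacle is precisely the form-level comparison of the second paragraph (identifying the Friedrichs form domain as the closure of $\Omega^{2,q}_c(\reg(X))$ and matching it against the absolute form); once this is in place, everything else is a routine repetition of the min-max and termwise heat-trace arguments already carried out in Th.~\ref{spechk}.
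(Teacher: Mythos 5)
Your proof is correct and follows essentially the same route as the paper's, which likewise obtains \eqref{compare} from the min-max theorem (deferring the details to Th.\ 5.5 of \cite{FBei}) and then deduces the remaining assertions from \eqref{compare} together with Th.\ \ref{coccabelladezio}. Your form-domain comparison $\mathcal{Q}^{\mathcal{F}}\subseteq\mathcal{Q}^{\abs}$ with equality of the quadratic forms on $\mathcal{Q}^{\mathcal{F}}$ is a correct, self-contained way of carrying out exactly that min-max step.
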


\begin{proof}
The inequality \eqref{compare} follows by the min-max Theorem arguing as in the proof of Th. 5.5 in \cite{FBei}. The remaining properties follow now immediately  using \eqref{compare} and Th. \ref{coccabelladezio}.
\end{proof}

We have the following application  concerning  forms of bi-degree $(1,0)$.

\begin{teo}
\label{supercoccabelladezio}
In the  setting of Th. \ref{coccabelladezio}. Assume moreover that $h$ is K\"ahler. Consider the operator
\begin{equation}
\label{sacripante}
\Delta_{\overline{\partial},1,0}^{\mathcal{F}}:L^2\Omega^{1,0}(\reg(X),h)\rightarrow L^2\Omega^{1,0}(\reg(X),h)
\end{equation}
that is the Friedrichs extension of $\Delta_{\overline{\partial},1,0}:\Omega^{1,0}_c(\reg(X))\rightarrow \Omega^{1,0}_c(\reg(X))$.
Then \eqref{sacripante} has discrete spectrum. Let  $$0\leq \mu_1\leq \mu_2\leq...\leq \mu_k\leq...$$ be the eigenvalues of \eqref{sacripante}. We have the following asymptotic  inequality 
\begin{equation}
\label{zucca}
\lim \inf \mu_k k^{-\frac{1}{2}}>0
\end{equation}
as $k\rightarrow \infty$.\\ Finally consider  the heat operator associated to \eqref{sacripante}
\begin{equation}
\label{gargantuesco}
e^{-t\Delta_{\overline{\partial},1,0}^{\mathcal{F}}}:L^2\Omega^{1,0}(\reg(X),h)\rightarrow L^2\Omega^{1,0}(\reg(X),h).
\end{equation}
Then \eqref{gargantuesco} is a trace class operator and its trace satisfies the following estimate
\begin{equation}
\label{pantagruelico}
\Tr(e^{-t\Delta_{\overline{\partial},1,0}^{\mathcal{F}}})\leq  Ct^{-2}
\end{equation}
for $t\in (0,1]$ and some constant  $C>0$.
\end{teo}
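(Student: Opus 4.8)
The plan is to identify the operator \eqref{sacripante} with one already understood in Th. \ref{lillottina} and Th. \ref{coccabelladezio}, namely $\Delta_{\overline{\partial},2,1,\abs}$, by means of a unitary equivalence induced by the Hodge star operator $*:L^2\Omega^{2,1}(\reg(X),h)\rightarrow L^2\Omega^{1,0}(\reg(X),h)$. The starting point is a purely formal identity. Since $(1,0)$ is a bottom degree in the antiholomorphic variable, the Laplacian $\Delta_{\partial,1,0}=\partial^t_{1,0}\circ\partial_{1,0}$ consists of a single term, so exactly as in the computation $\Delta^{\mathcal{F}}=d^t_{\max}\circ d_{\min}$ used in the proof of Th. \ref{Beltrami} (and using \eqref{emendare}) one gets
\begin{equation*}
\Delta_{\partial,1,0}^{\mathcal{F}}=\partial^t_{1,0,\max}\circ\partial_{1,0,\min}=\Delta_{\partial,1,0,\rel}.
\end{equation*}
Moreover, the hypothesis that $h$ is K\"ahler gives the classical K\"ahler identity $\Delta_{\partial}=\Delta_{\overline{\partial}}$ on $\reg(X)$; as the two differential operators already coincide on $\Omega^{1,0}_c(\reg(X))$ they admit the same Friedrichs extension, so that
\begin{equation*}
\Delta_{\overline{\partial},1,0}^{\mathcal{F}}=\Delta_{\partial,1,0}^{\mathcal{F}}=\Delta_{\partial,1,0,\rel}.
\end{equation*}

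Next I would apply the fifth item of Prop. \ref{occhiodibue} with $(p,q)=(2,1)$ and $m=v=2$, so that $(m-q,m-p)=(1,0)$. This yields $*\left(\mathcal{D}(\Delta_{\overline{\partial},2,1,\abs})\right)=\mathcal{D}(\Delta_{\partial,1,0,\rel})$ together with $*\circ\Delta_{\overline{\partial},2,1,\abs}=\Delta_{\partial,1,0,\rel}\circ *$. In view of the identification above, $*$ is therefore a unitary operator intertwining $\Delta_{\overline{\partial},2,1,\abs}$ with $\Delta_{\overline{\partial},1,0}^{\mathcal{F}}$, domains included. Hence the two self-adjoint operators are unitarily equivalent: they have the same spectrum with multiplicities, and $e^{-t\Delta_{\overline{\partial},1,0}^{\mathcal{F}}}=*\circ e^{-t\Delta_{\overline{\partial},2,1,\abs}}\circ *^{-1}$, so that $\Tr(e^{-t\Delta_{\overline{\partial},1,0}^{\mathcal{F}}})=\Tr(e^{-t\Delta_{\overline{\partial},2,1,\abs}})$.

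With this equivalence in hand the three assertions follow by transport. Discreteness of the spectrum of \eqref{sacripante} is inherited from Th. \ref{lillottina}; the eigenvalues $\mu_k$ of \eqref{sacripante} coincide with the eigenvalues $\lambda_k$ of $\Delta_{\overline{\partial},2,1,\abs}$, so \eqref{zucca} is exactly \eqref{resiles} for $q=1$; and \eqref{gargantuesco} is trace class with $\Tr(e^{-t\Delta_{\overline{\partial},1,0}^{\mathcal{F}}})\leq Ct^{-2}$ for $t\in(0,1]$, by the equality of the traces and \eqref{resilez} for $q=1$. The only point requiring care is that the K\"ahler identity and the bottom-degree coincidence of the Friedrichs and relative extensions persist at the level of self-adjoint operators together with their domains; the domain matching is, however, already built into Prop. \ref{occhiodibue}, and the bottom-degree identity is the same elementary manipulation with \eqref{emendare} used for the Laplace-Beltrami operator, so no analytic input beyond Th. \ref{lillottina} and Th. \ref{coccabelladezio} is required.
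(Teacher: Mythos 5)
Your reduction to degree $(2,1)$ via the Hodge star is the right idea, but the first displayed identity in your argument is not justified and this is a genuine gap. You assert that $\Delta_{\partial,1,0}$ ``consists of a single term'' because $(1,0)$ is a bottom degree in the antiholomorphic variable; this conflates the two complexes. The operator $\partial$ raises the \emph{holomorphic} degree, so the relevant complex for $\Delta_{\partial,\cdot,0}$ is $\Omega^{0,0}\rightarrow\Omega^{1,0}\rightarrow\Omega^{2,0}$, in which $(1,0)$ sits in middle degree: $\Delta_{\partial,1,0}=\partial_{0,0}\circ\partial^t_{0,0}+\partial^t_{1,0}\circ\partial_{1,0}$ has two terms. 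The identity $\Delta^{\mathcal{F}}=d^t_{\max}\circ d_{\min}$ from \cite{BLE} that you invoke is a bottom-degree (single-term) statement and does not apply here, so the equality $\Delta_{\partial,1,0}^{\mathcal{F}}=\Delta_{\partial,1,0,\rel}$ is unproven; for a two-term Laplacian the form domain of the Friedrichs extension is the closure of $\Omega^{1,0}_c$ in the joint graph norm, which is a priori only \emph{contained in} $\mathcal{D}(\partial_{1,0,\min})\cap\mathcal{D}(\partial^t_{0,0,\max})$, the form domain of the relative extension. Everything downstream depends on this: without it you cannot apply Prop.~\ref{occhiodibue}(5) to conclude that $*$ intertwines $\Delta_{\overline{\partial},1,0}^{\mathcal{F}}$ with $\Delta_{\overline{\partial},2,1,\abs}$. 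Note also that your conclusion would be strictly stronger than what the paper proves (equality of the spectra of the Friedrichs and absolute extensions in degree $(2,1)$, rather than the one-sided comparison $\lambda_k\leq\mu_k$ of Th.~\ref{coccabelladezios}), which should have been a warning sign.

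The paper's route avoids this entirely: since $*$ maps $\Omega^{2,1}_c(\reg(X))$ unitarily onto $\Omega^{1,0}_c(\reg(X))$ and intertwines the \emph{formal} operators, $*\circ\Delta_{\overline{\partial},2,1}=\Delta_{\partial,1,0}\circ *$ by \eqref{cicci}, it intertwines the associated quadratic forms with core $\Omega_c$ and hence the \emph{Friedrichs} extensions; the K\"ahler identity then gives $\Delta_{\partial,1,0}^{\mathcal{F}}=\Delta_{\overline{\partial},1,0}^{\mathcal{F}}$, so \eqref{sacripante} is unitarily equivalent to $\Delta^{\mathcal{F}}_{\overline{\partial},2,1}$, and all claims follow from Th.~\ref{coccabelladezios} (which already transfers the spectral estimates from $\Delta_{\overline{\partial},2,1,\abs}$ to $\Delta^{\mathcal{F}}_{\overline{\partial},2,1}$ by min--max). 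If you want to salvage your version, you would either have to prove $\Delta_{\partial,1,0}^{\mathcal{F}}=\Delta_{\partial,1,0,\rel}$ in this singular setting, or simply replace the target of your unitary equivalence by the Friedrichs extension as the paper does.
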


\begin{proof}
Using \eqref{cicci} and the Hodge star operator we have $*\circ \Delta_{\overline{\partial},2,1}=\Delta_{\partial,1,0}\circ *$ on $\Omega^{2,1}_c(\reg(X))$. It is easy to check that the previous equality implies that $*(\mathcal{D}(\Delta_{\overline{\partial},2,1}^{\mathcal{F}}))=\mathcal{D}(\Delta_{\partial,1,0}^{\mathcal{F}})$ and that  $*\circ \Delta_{\overline{\partial},2,1}^{\mathcal{F}}=\Delta_{\partial,1,0}^{\mathcal{F}}\circ *$ on $\mathcal{D}(\Delta_{\overline{\partial},2,1}^{\mathcal{F}})$. Moreover, by the K\"ahler identities, we have $\Delta_{\partial,1,0}=\Delta_{\overline{\partial},1,0}$ on $\Omega^{1,0}_c(\reg(X))$ and therefore $\Delta_{\partial,1,0}^{\mathcal{F}}=\Delta_{\overline{\partial},1,0}^{\mathcal{F}}$ on $L^2\Omega^{1,0}(\reg(X),h)$ as unbounded self-adjoint operators. Summarizing  we have shown that $*(\mathcal{D}(\Delta_{\overline{\partial},2,1}^{\mathcal{F}}))=\mathcal{D}(\Delta_{\overline{\partial},1,0}^{\mathcal{F}})$ and that $*\circ \Delta_{\overline{\partial},2,1}^{\mathcal{F}}=\Delta_{\overline{\partial},1,0}^{\mathcal{F}}\circ *$ on  $\mathcal{D}(\Delta_{\overline{\partial},2,1}^{\mathcal{F}})$. Now all the statements of this theorem follows by Th. \ref{coccabelladezios}.
\end{proof}

In the last part of this section we gather various corollaries that arise, through  \eqref{cicuta} and Prop. \ref{occhiodibue}, as immediate consequences of the results proved so far. 
\begin{cor}
\label{sigari}
In the  setting of Th. \ref{lillottina}. For each $q=0,1,2$ the operator
\begin{equation}
\label{salicornia}
\Delta_{\overline{\partial},0,q,\rel}:L^2\Omega^{0,q}(\reg(X),h)\rightarrow L^2\Omega^{0,q}(\reg(X),h)
\end{equation}
has discrete spectrum. Let  $$0\leq \lambda_1\leq \lambda_2\leq...\leq \lambda_k\leq...$$ be the eigenvalues of \eqref{salicornia}. Then we have the following asymptotic  inequality 
\begin{equation}
\label{drive}
\lim \inf \lambda_k k^{-\frac{1}{2}}>0
\end{equation}
as $k\rightarrow \infty$.\\ Finally consider  the heat operator associated to \eqref{salicornia}
\begin{equation}
\label{ottenebrare}
e^{-t\Delta_{\overline{\partial},0,q,\rel}}:L^2\Omega^{0,q}(\reg(X),h)\rightarrow L^2\Omega^{0,q}(\reg(X),h).
\end{equation}
Then \eqref{ottenebrare} is a trace class operator and  we have the following estimate for its trace
\begin{equation}
\label{obnubilare}
\Tr(e^{-t\Delta_{\overline{\partial},0,q,\rel}})\leq  C_qt^{-2}
\end{equation}
for $t\in (0,1]$ and some constant $C_q>0$.
\end{cor}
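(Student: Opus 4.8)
The plan is to reduce the entire statement to the results already established for the absolute Hodge-Kodaira Laplacian on $(2,\bullet)$-forms, transporting them along the map $*\circ c$. Recall that $c:L^2\Omega^{p,q}(\reg(X),h)\to L^2\Omega^{q,p}(\reg(X),h)$ is a norm-preserving $\mathbb{C}$-antilinear bijection (see \eqref{conjhil}) and that $*$ is unitary, so their composition $*\circ c$ is an anti-unitary bijection. With $m=v=2$, applying Prop. \ref{occhiodibue}(6) with $p=2$ and bidegree index $2-q$ in place of its running index, the map $*\circ c$ carries $L^2\Omega^{2,2-q}(\reg(X),h)$ onto $L^2\Omega^{0,q}(\reg(X),h)$, sends $\mathcal{D}(\Delta_{\overline{\partial},2,2-q,\abs})$ onto $\mathcal{D}(\Delta_{\overline{\partial},0,q,\rel})$, and satisfies $*\circ c\circ \Delta_{\overline{\partial},2,2-q,\abs}=\Delta_{\overline{\partial},0,q,\rel}\circ *\circ c$.

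First I would record that an anti-unitary intertwining preserves the spectrum together with all multiplicities. Indeed, if $\Delta_{\overline{\partial},2,2-q,\abs}u=\lambda u$, then $\Delta_{\overline{\partial},0,q,\rel}(*c\,u)=*c(\lambda u)=\lambda\,(*c\,u)$, the last equality holding because $\lambda\in\mathbb{R}$; since $*\circ c$ is a bijection it maps the $\lambda$-eigenspace of one operator anti-linearly isomorphically onto the $\lambda$-eigenspace of the other, so the dimensions agree. Consequently $\Delta_{\overline{\partial},0,q,\rel}$ and $\Delta_{\overline{\partial},2,2-q,\abs}$ have identical eigenvalue sequences (with multiplicity).

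From this identification everything follows. Discreteness of the spectrum of $\Delta_{\overline{\partial},0,q,\rel}$ for each $q\in\{0,1,2\}$ is immediate from Th. \ref{lillottina}(1): as $q$ ranges over $\{0,1,2\}$ the index $2-q$ ranges over $\{2,1,0\}$, so every operator $\Delta_{\overline{\partial},2,2-q,\abs}$ we need is covered there. The asymptotic inequality \eqref{drive} and the heat-trace estimate \eqref{obnubilare} then follow word for word from \eqref{resiles} and \eqref{resilez} of Th. \ref{coccabelladezio}, because both the eigenvalue counting function and the heat trace $\sum_k e^{-t\lambda_k}$ depend only on the common eigenvalues; in particular one may take the constant $C_q$ in \eqref{obnubilare} to be the constant $C_{2-q}$ appearing in \eqref{resilez}.

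The argument is essentially bookkeeping, so I do not expect a serious analytic obstacle. The only point requiring genuine care is the verification that $*\circ c$ is a bona fide anti-unitary equivalence of the \emph{self-adjoint} realizations with exactly matching domains, which is precisely the content of Prop. \ref{occhiodibue}(6) combined with the norm-preservation in \eqref{conjhil} and the unitarity of $*$; the anti-linearity of $c$ is harmless here because all eigenvalues in play are real, so they are fixed rather than conjugated under transport.
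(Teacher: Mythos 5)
Your proposal is correct and follows essentially the same route as the paper: the paper's proof of this corollary also transports $\Delta_{\overline{\partial},2,q,\abs}$ to $\Delta_{\overline{\partial},0,2-q,\rel}$ via the anti-unitary map built from $c$ and $*$ (using \eqref{cicuta} and Prop. \ref{occhiodibue}) and then invokes Th. \ref{lillottina} and Th. \ref{coccabelladezio}. Your explicit remark that an anti-unitary intertwining preserves eigenvalues with multiplicities because the eigenvalues are real is exactly the (implicit) bookkeeping the paper relies on.
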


\begin{proof}
Using \eqref{cicuta} and Prop. \ref{occhiodibue} we have that any form $\omega\in L^2\Omega^{2,q}(\reg(X),h)$ lies in $\mathcal{D}(\Delta_{\overline{\partial},2,q,\abs})$ if and only if $c(*\omega)\in \mathcal{D}(\Delta_{\overline{\partial},0,2-q,\rel})$ and if this is the case then we have $c(*(\Delta_{\overline{\partial},2,q,\abs}\omega))=\Delta_{\overline{\partial},0,2-q,\rel}(c(*\omega))$. Since  $c\circ *:L^2\Omega^{2,q}(\reg(X),h)\rightarrow L^2\Omega^{0,2-q}(\reg(X),g)$ is a continuous and  bijective $\mathbb{C}$-antilinear isomorphism with continuous inverse the conclusion  follows now by Th. \ref{lillottina} and Th. \ref{coccabelladezio}.
\end{proof}

\begin{cor}
\label{supercoccabelladeziow}
In the  setting of Th. \ref{lillottina}. For each $q=0,1,2$ the operator
\begin{equation}
\label{sacripantez}
\Delta_{\overline{\partial},0,q}^{\mathcal{F}}:L^2\Omega^{0,q}(\reg(X),h)\rightarrow L^2\Omega^{0,q}(\reg(X),h)
\end{equation}
has discrete spectrum. Let  $$0\leq \mu_1\leq \mu_2\leq...\leq \mu_k\leq...$$ be the eigenvalues of \eqref{sacripantez}. Then we have the following inequality $$\mu_k\geq \lambda_k$$ where $0\leq \lambda_1\leq...\leq\lambda_k\leq...$ are the eigenvalues of \eqref{salicornia}. Moreover we have the following asymptotic  inequality 
\begin{equation}
\label{zuccaz}
\lim \inf \mu_k k^{-\frac{1}{2}}>0
\end{equation}
as $k\rightarrow \infty$.\\ Consider the heat operator associated to \eqref{sacripantez}
\begin{equation}
\label{gargantuescoz}
e^{-t\Delta_{\overline{\partial},0,q}^{\mathcal{F}}}:L^2\Omega^{0,q}(\reg(X),h)\rightarrow L^2\Omega^{0,q}(\reg(X),h).
\end{equation}
Then \eqref{gargantuescoz} is a trace class operator.  We have the following inequality $$\Tr(e^{-t\Delta_{\overline{\partial},0,q}^{\mathcal{F}}})\leq \Tr(e^{-t\Delta_{\overline{\partial},0,q,\rel}})$$ for every $t>0$ and 
furthermore   $\Tr(e^{-t\Delta_{\overline{\partial},0,q}^{\mathcal{F}}})$ satisfies the following estimate
\begin{equation}
\label{pantagruelicoz}
\Tr(e^{-t\Delta_{\overline{\partial},0,q}^{\mathcal{F}}})\leq  C_qt^{-2}
\end{equation}
for $t\in (0,1]$ and some constant $C_q>0$.
\end{cor}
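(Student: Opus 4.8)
The plan is to reduce every assertion to the corresponding statement for $(2,2-q)$-forms, which is already available in Th. \ref{coccabelladezios}, by transporting it through the antiunitary map $c\circ *$, exactly as Cor. \ref{sigari} was deduced from Th. \ref{lillottina} and Th. \ref{coccabelladezio}. Recall from \eqref{cicuta} and Prop. \ref{comconj} that $c\circ *\colon L^2\Omega^{2,2-q}(\reg(X),h)\to L^2\Omega^{0,q}(\reg(X),h)$ is a continuous $\mathbb{C}$-antilinear isomorphism with continuous inverse which carries $\Omega^{2,2-q}_c(\reg(X))$ bijectively onto $\Omega^{0,q}_c(\reg(X))$ and intertwines the formal Hodge-Kodaira Laplacians there, namely $c(*(\Delta_{\overline{\partial},2,2-q}\omega))=\Delta_{\overline{\partial},0,q}(c(*\omega))$ for every $\omega\in\Omega^{2,2-q}_c(\reg(X))$; this is the restriction to compactly supported forms of Prop. \ref{occhiodibue}, where the absolute, relative and formal Laplacians all coincide.

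The first task, which I expect to be the only genuinely new point and hence the main obstacle, is to lift this formal intertwining to the level of the Friedrichs extensions, i.e.\ to show that $c\circ *$ maps $\mathcal{D}(\Delta_{\overline{\partial},2,2-q}^{\mathcal{F}})$ onto $\mathcal{D}(\Delta_{\overline{\partial},0,q}^{\mathcal{F}})$ and satisfies $c\circ *\circ \Delta_{\overline{\partial},2,2-q}^{\mathcal{F}}=\Delta_{\overline{\partial},0,q}^{\mathcal{F}}\circ c\circ *$. I would obtain this from the variational characterization of the Friedrichs extension: $\Delta^{\mathcal{F}}_{\overline{\partial},0,q}$ is the self-adjoint operator associated with the closure of the non-negative form $\omega\mapsto\|\overline{\partial}_{0,q}\omega\|^2+\|\overline{\partial}^t_{0,q-1}\omega\|^2$ on $\Omega^{0,q}_c(\reg(X))$, and likewise in bidegree $(2,2-q)$. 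Since $c\circ *$ is norm-preserving, sends $\Omega^{2,2-q}_c$ onto $\Omega^{0,q}_c$, and intertwines the formal Laplacians, it intertwines the two quadratic forms; it therefore carries the form closure of one onto that of the other, and so the Friedrichs extension of one onto the Friedrichs extension of the other. Being antiunitary, $c\circ *$ conjugates a self-adjoint operator into a self-adjoint operator with complex-conjugate (hence identical) spectrum, so it preserves eigenvalues.

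Granting this intertwining, all the claims follow mechanically. The eigenvalues $\mu_k$ of \eqref{sacripantez} coincide with those of $\Delta_{\overline{\partial},2,2-q}^{\mathcal{F}}$, while by Cor. \ref{sigari} (that is, \eqref{cicuta} together with Prop. \ref{occhiodibue}) the eigenvalues $\lambda_k$ of \eqref{salicornia} coincide with those of $\Delta_{\overline{\partial},2,2-q,\abs}$. Hence Th. \ref{coccabelladezios} applied with index $(2,2-q)$ yields at once: the discreteness of \eqref{sacripantez}; the inequality $\mu_k\geq\lambda_k$, which is \eqref{compare} transported through $c\circ *$; the asymptotic bound $\liminf_k \mu_k k^{-1/2}>0$; the trace-class property of the heat operator together with $\Tr(e^{-t\Delta_{\overline{\partial},0,q}^{\mathcal{F}}})\leq\Tr(e^{-t\Delta_{\overline{\partial},0,q,\rel}})$ for every $t>0$; and, combining the last inequality with \eqref{obnubilare} of Cor. \ref{sigari}, the estimate $\Tr(e^{-t\Delta_{\overline{\partial},0,q}^{\mathcal{F}}})\leq C_q t^{-2}$ for $t\in(0,1]$.

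As a self-contained alternative for the comparison step, one may avoid the transport altogether: the relative extension $\Delta_{\overline{\partial},0,q,\rel}$ is a non-negative self-adjoint extension of the symmetric operator $\Delta_{\overline{\partial},0,q}$ on $\Omega^{0,q}_c(\reg(X))$, and the Friedrichs extension is the maximal such extension in the sense of quadratic forms, so the min-max principle gives $\mu_k\geq\lambda_k$ for every $k$ directly. Feeding in Cor. \ref{sigari} (which already provides $\liminf_k\lambda_k k^{-1/2}>0$ and $\sum_k e^{-t\lambda_k}\leq C_q t^{-2}$), the inequality $\mu_k\geq\lambda_k$ then forces $\mu_k\to\infty$, hence discreteness of \eqref{sacripantez}, the stated asymptotics, and both trace estimates, with no further input. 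In either route the crux is exactly the passage from the formal/absolute-relative intertwining of Prop. \ref{occhiodibue} to a genuine statement about the Friedrichs extension; once that is secured, everything reduces to Th. \ref{coccabelladezios} and Cor. \ref{sigari}.
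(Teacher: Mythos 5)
Your proposal is correct and follows essentially the same route as the paper: the published proof of this corollary simply transports Th. \ref{coccabelladezios} (for bidegree $(2,2-q)$) through the antiunitary map of \eqref{cicuta} and Prop. \ref{occhiodibue}, exactly as in the proof of Cor. \ref{sigari}. Your explicit verification that $c\circ *$ intertwines the two Friedrichs extensions via the form-closure characterization, and your alternative min--max comparison using the maximality of the Friedrichs extension among non-negative self-adjoint extensions, are both sound ways of filling in a step the paper leaves implicit.
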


\begin{proof}
This corollary follows by Th. \ref{lillottina} and  Th. \ref{coccabelladezios} using \eqref{cicuta} and Prop. \ref{occhiodibue} as in the  proof of Cor. \ref{sigari}.
\end{proof}

\begin{cor}
\label{supercoccabelladeziof}
In the setting of Th. \ref{supercoccabelladezio}. The operator
\begin{equation}
\label{sacripantef}
\Delta_{\overline{\partial},1,2}^{\mathcal{F}}:L^2\Omega^{1,2}(\reg(X),h)\rightarrow L^2\Omega^{1,2}(\reg(X),h)
\end{equation}
 has discrete spectrum. Let  $$0\leq \mu_1\leq \mu_2\leq...\leq \mu_k\leq...$$ be the eigenvalues of \eqref{sacripantef}. Then we have the following asymptotic  inequality 
\begin{equation}
\label{zuccaf}
\lim \inf \mu_k k^{-\frac{1}{2}}>0
\end{equation}
as $k\rightarrow \infty$.\\ Finally consider  the heat operator associated to \eqref{sacripantef}
\begin{equation}
\label{gargantuescof}
e^{-t\Delta_{\overline{\partial},1,2}^{\mathcal{F}}}:L^2\Omega^{1,2}(\reg(X),h)\rightarrow L^2\Omega^{1,2}(\reg(X),h).
\end{equation}
Then \eqref{gargantuescof} is a trace class operator and its trace satisfies the following estimate
\begin{equation}
\label{pantagruelicof}
\Tr(e^{-t\Delta_{\overline{\partial},1,2}^{\mathcal{F}}})\leq  Ct^{-2}
\end{equation}
for $t\in (0,1]$ and some constant $C>0$.
\end{cor}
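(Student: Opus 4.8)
The plan is to reduce the whole statement to the operator $\Delta_{\overline{\partial},2,1}^{\mathcal{F}}$, whose spectral and heat-trace properties are already established in Th. \ref{coccabelladezios} with $q=1$, by transporting everything through the $\mathbb{C}$-antilinear conjugation $c$ and the Kähler identities. Both tools are available here, since $v=2$ and $h$ is assumed Kähler in the setting of Th. \ref{supercoccabelladezio}.

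First I would pass from bidegree $(1,2)$ to bidegree $(2,1)$ by complex conjugation. By Prop. \ref{comconj} we have $\overline{\partial}_{p,q}=c\circ\partial_{q,p}\circ c$ and $\overline{\partial}^t_{p,q}=c\circ\partial^t_{q,p}\circ c$ on smooth forms; inserting these into $\Delta_{\overline{\partial},1,2}=\overline{\partial}_{1,1}\circ\overline{\partial}^t_{1,1}+\overline{\partial}^t_{1,2}\circ\overline{\partial}_{1,2}$ and using $c\circ c=\id$ gives $\Delta_{\overline{\partial},1,2}=c\circ\Delta_{\partial,2,1}\circ c$ on $\Omega^{1,2}_c(\reg(X))$. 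Now $c:L^2\Omega^{2,1}(\reg(X),h)\to L^2\Omega^{1,2}(\reg(X),h)$ is an antiunitary involution carrying $\Omega^{2,1}_c(\reg(X))$ onto $\Omega^{1,2}_c(\reg(X))$ and preserving $L^2$-norms, see \eqref{conjhil}. Hence the quadratic form $\omega\mapsto\langle\Delta_{\overline{\partial},1,2}\omega,\omega\rangle$ equals $\langle\Delta_{\partial,2,1}(c\omega),c\omega\rangle$, so $c$ is an isometry between the form-norm completions and thus transports the closed form defining the Friedrichs extension of $\Delta_{\partial,2,1}$ onto that of $\Delta_{\overline{\partial},1,2}$. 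This gives $\mathcal{D}(\Delta_{\overline{\partial},1,2}^{\mathcal{F}})=c(\mathcal{D}(\Delta_{\partial,2,1}^{\mathcal{F}}))$ and $\Delta_{\overline{\partial},1,2}^{\mathcal{F}}=c\circ\Delta_{\partial,2,1}^{\mathcal{F}}\circ c$.

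Next I would invoke the Kähler identities: since $h$ is Kähler we have $\Delta_{\partial,2,1}=\Delta_{\overline{\partial},2,1}$ on $\Omega^{2,1}_c(\reg(X))$, whence the two operators share the same Friedrichs extension, $\Delta_{\partial,2,1}^{\mathcal{F}}=\Delta_{\overline{\partial},2,1}^{\mathcal{F}}$. Combining this with the previous step shows that $\Delta_{\overline{\partial},1,2}^{\mathcal{F}}$ is antiunitarily equivalent, via $c$, to $\Delta_{\overline{\partial},2,1}^{\mathcal{F}}$. An antiunitary involution preserves the spectrum: if $\Delta_{\overline{\partial},2,1}^{\mathcal{F}}\psi=\mu\psi$ with $\mu\in\mathbb{R}$ then $\Delta_{\overline{\partial},1,2}^{\mathcal{F}}(c\psi)=\mu\,c\psi$, so the two operators have identical (discrete) spectra; moreover $e^{-t\Delta_{\overline{\partial},1,2}^{\mathcal{F}}}=c\circ e^{-t\Delta_{\overline{\partial},2,1}^{\mathcal{F}}}\circ c$, so the two heat operators have the same trace.

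Finally I would quote Th. \ref{coccabelladezios} with $q=1$, which gives that $\Delta_{\overline{\partial},2,1}^{\mathcal{F}}$ has discrete spectrum with $\lim\inf\lambda_k k^{-1/2}>0$ and a trace-class heat operator satisfying $\Tr(e^{-t\Delta_{\overline{\partial},2,1}^{\mathcal{F}}})\leq B_1 t^{-2}$ for $t\in(0,1]$. Transporting these conclusions through $c$ yields exactly the three assertions of the corollary: discreteness of the spectrum of $\Delta_{\overline{\partial},1,2}^{\mathcal{F}}$, the asymptotic inequality \eqref{zuccaf}, and the trace estimate \eqref{pantagruelicof}. The only genuinely delicate point is the middle one, namely checking that the \emph{antilinear} map $c$ really transports the Friedrichs extension (and not merely the minimal or maximal closed extension); this is where the characterization of $\Delta^{\mathcal{F}}$ through its closed quadratic form, rather than through the operator itself, is essential. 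The bidegree bookkeeping in Prop. \ref{comconj} and the use of the Kähler identity are routine but must be carried out with care.
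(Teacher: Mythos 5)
Your argument is correct and rests on the same ingredients as the paper's: conjugation/Hodge duality, the K\"ahler identities, and the already-established spectral results for the canonical bundle in bidegree $(2,1)$. The only difference is one of factorization --- the paper reduces $\Delta_{\overline{\partial},1,2}^{\mathcal{F}}$ to $\Delta_{\overline{\partial},1,0}^{\mathcal{F}}$ via $c\circ *$ and cites Th. \ref{supercoccabelladezio} (which itself passes through $\Delta_{\overline{\partial},2,1}^{\mathcal{F}}$ by the Hodge star and the K\"ahler identities), whereas you go directly from bidegree $(1,2)$ to $(2,1)$ by conjugation alone and cite Th. \ref{coccabelladezios}; the two chains unwind to the same computation, and your remark that $c$, being an antiunitary bijection between the form cores, transports the Friedrichs extension (characterized by its closed quadratic form) is precisely the point that makes either reduction legitimate.
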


\begin{proof}
This corollary follows by Th. \ref{supercoccabelladezio} using \eqref{cicuta} and Prop. \ref{occhiodibue} as in the  proof of Cor. \ref{sigari}.
\end{proof}

We conclude the paper with the following  McKean-Singer formula concerning   $(L^2\Omega^{0,q}(\reg(X),h),\overline{\partial}_{0,q,\min})$. Let $X$ and $h$ be as in Th. \ref{lillottina}. Consider the operator
\begin{equation}
\label{isabellaf}
\overline{\partial}_{\min}+\overline{\partial}^t_{0,1,\max}:L^2(\reg(X),h)\oplus L^2\Omega^{0,2}(\reg(X),h)\rightarrow L^2\Omega^{0,1}(\reg(X),h)
\end{equation}
 whose domain is $\mathcal{D}(\overline{\partial}_{\min})\oplus \mathcal{D}(\overline{\partial}_{0,1,\max}^t)\subset L^2(\reg(X),h)\oplus L^2\Omega^{0,2}(\reg(X),h)$. Its adjoint is 
\begin{equation}
\label{regef}
\overline{\partial}_{0,1,\min}+\overline{\partial}^t_{\max}:L^2\Omega^{0,1}(\reg(X),h)\rightarrow L^2(\reg(X),h) \oplus L^2\Omega^{0,2}(\reg(X),h)
\end{equation}
with domain given by   $\mathcal{D}(\overline{\partial}_{0,1,\min})\cap \mathcal{D}(\overline{\partial}_{\max}^t)\subset L^2\Omega^{0,1}(\reg(X),h)$.

\begin{cor}
\label{mckeanwx}
In the setting of Th. \ref{coccabelladezio}. Let us label by $(\overline{\partial}_{0,\min}+\overline{\partial}_{0,\max}^t)^+$ the operator defined in \eqref{isabellaf}. Then $(\overline{\partial}_{0,\min}+\overline{\partial}_{0,\max}^t)^+$ is a Fredholm operator on its domain endowed with the graph norm and 
\begin{equation}
\label{jilmqz}
\ind((\overline{\partial}_{0,\min}+\overline{\partial}_{0,\max}^t)^+)=\sum_{q=0}^2(-1)^q\Tr(e^{-t\Delta_{\overline{\partial},0,q,\rel}}).
\end{equation}
In particular we have 
\begin{equation}
\label{jilmxqz}
\chi(M,\mathcal{O}_{M})=\sum_{q=0}^2(-1)^q\Tr(e^{-t\Delta_{\overline{\partial},0,q,\rel}})
\end{equation}
where $\pi:M\rightarrow X$ is any resolution of $X$ and $\chi(M,\mathcal{O}_{M})=\sum_{q=0}^2(-1)^q\dim(H^{0,q}_{\overline{\partial}}(M))$. 
\end{cor}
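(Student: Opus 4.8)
The plan is to follow the argument used for Cor.~\ref{kean}, replacing the maximal $(2,\bullet)$-complex by the minimal $(0,\bullet)$-complex and the absolute Laplacians $\Delta_{\overline{\partial},2,q,\abs}$ by the relative ones $\Delta_{\overline{\partial},0,q,\rel}$. First I would establish Fredholmness. By Cor.~\ref{sigari} each $\Delta_{\overline{\partial},0,q,\rel}$, $q=0,1,2$, has discrete spectrum, so in particular $0$ is an isolated eigenvalue of finite multiplicity for each of them. Consequently the minimal Dolbeault complex $(L^2\Omega^{0,\bullet}(\reg(X),h),\overline{\partial}_{0,\bullet,\min})$ is a Fredholm complex in the sense of \cite{BL}: every $\overline{\partial}_{0,q,\min}$ has closed range and every cohomology group is finite dimensional. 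Hence the associated Dirac operator $\overline{\partial}_{0,\min}+\overline{\partial}^t_{0,\max}$ is Fredholm, and so is its even-to-odd part $(\overline{\partial}_{0,\min}+\overline{\partial}^t_{0,\max})^+$ defined in \eqref{isabellaf}, whose adjoint is \eqref{regef}.

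Next I would prove \eqref{jilmqz}. Since the full Dirac operator is self-adjoint with square equal to the total relative Laplacian $\bigoplus_{q}\Delta_{\overline{\partial},0,q,\rel}$, the Hodge--Kodaira decomposition for the minimal complex gives
$$\ind((\overline{\partial}_{0,\min}+\overline{\partial}^t_{0,\max})^+)=\sum_{q=0}^2(-1)^q\dim\ker(\Delta_{\overline{\partial},0,q,\rel}).$$
By Cor.~\ref{sigari} each heat operator $e^{-t\Delta_{\overline{\partial},0,q,\rel}}$ is trace class, so the supertrace $\sum_{q}(-1)^q\Tr(e^{-t\Delta_{\overline{\partial},0,q,\rel}})$ is well defined; the usual McKean--Singer cancellation of the nonzero eigenvalues, the eigenspaces of a fixed positive eigenvalue in consecutive degrees being matched by $\overline{\partial}_{0,\bullet,\min}$ and its adjoint, shows that this supertrace is independent of $t$ and equals $\sum_{q}(-1)^q\dim\ker(\Delta_{\overline{\partial},0,q,\rel})$. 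This yields \eqref{jilmqz}, the details being identical to those in the proof of Cor.~5.3 in \cite{FBei} already invoked for Cor.~\ref{kean}.

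Finally, to obtain \eqref{jilmxqz} I would identify the harmonic spaces with sheaf cohomology on a resolution. By Hodge theory for the minimal complex we have $\dim\ker(\Delta_{\overline{\partial},0,q,\rel})=\dim H^{0,q}_{2,\overline{\partial}_{\min}}(\reg(X),h)$, and the results of \cite{JRu} identify this with $\dim H^q(M,\mathcal{O}_M)=\dim H^{0,q}_{\overline{\partial}}(M)$ for any resolution $\pi:M\rightarrow X$; summing over $q$ with alternating signs turns \eqref{jilmqz} into \eqref{jilmxqz}. Alternatively, one can avoid a direct appeal by combining Cor.~\ref{sigari}, which gives $\dim\ker(\Delta_{\overline{\partial},0,q,\rel})=\dim\ker(\Delta_{\overline{\partial},2,2-q,\abs})$ through the isomorphism $c\circ *$, with the identification $\dim\ker(\Delta_{\overline{\partial},2,2-q,\abs})=\dim H^{2-q}(M,\mathcal{K}_M)$ coming from Cor.~\ref{kean} and with Serre duality $H^{2-q}(M,\mathcal{K}_M)\cong H^{q}(M,\mathcal{O}_M)^*$ on the compact complex surface $M$. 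I expect this last step, namely pinning down precisely which sheaf on $M$ is computed by the minimal $(0,q)$-$L^2$-cohomology, to be the main point requiring care: it is exactly here that the geometry of the resolution and the boundedness of the pullback metric enter, and it is the ingredient that must be imported from \cite{JRu} rather than derived from the operator-theoretic machinery developed in the present paper.
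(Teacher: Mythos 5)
Your proposal is correct and follows essentially the same route as the paper: the paper proves \eqref{jilmqz} by "arguing as in the proof of \eqref{jilm}" (Fredholmness from discrete spectrum via Th. \ref{lillottina}/Cor. \ref{sigari}, then the McKean--Singer supertrace cancellation as in Cor. 5.3 of \cite{FBei}), and obtains \eqref{jilmxqz} by citing the identification of the minimal $L^2$-$\overline{\partial}$-cohomology with $H^q(M,\mathcal{O}_M)$ from \cite{JRu}, exactly as you do. Your alternative derivation of \eqref{jilmxqz} via $c\circ *$, Cor. \ref{kean} and Serre duality on $M$ is a legitimate and slightly more self-contained variant of that last step, but it is not the route the paper takes.
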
 

\begin{proof}
The equality \eqref{jilmqz} can be proved arguing as in the proof of  \eqref{jilm}. The equality \eqref{jilmxqz} follows by \eqref{jilmqz} and the results established in \cite{JRu}.
\end{proof}

\end{document}